\newcommand{\der}{\delta}
\newcommand{\cacha}{\Hat{\mathcal{C}}}
\newcommand{\delha}{\hat{\delta}}
\newcommand{\norm}[1]{\lVert #1\rVert}
\newcommand{\yti}{\tilde{y}}
\newcommand{\xti}{\tilde{x}}
\newcommand{\Xti}{\tilde{X}}
\newcommand{\Jti}{\tilde{J}}
\newcommand{\Kti}{\tilde{K}}
\newcommand{\ka}{\kappa}
\newcommand{\xrgh}{\mathbf{x}}
\newcommand{\bt}{\mathbf{2}}
\DeclareMathOperator{\id}{\text{Id}}
\newcommand{\eqcolon}{\mathrel{\mathord{=}\raise.2\p@\hbox{:}}}
\newcommand{\coloneq}{\mathrel{\raise.2\p@\hbox{:}\mathord{=}}}
\newcommand{\R}{\mathbb R}
\newcommand{\N}{\mathbb N}
\newcommand{\cb}{\mathcal B}
\newcommand{\co}{\mathcal O}
\newcommand{\cac}{\mathcal C}
\newcommand{\cd}{\mathcal D}
\newcommand{\cj}{\mathcal J}
\newcommand{\cl}{\mathcal L}
\newcommand{\cn}{\mathcal N}
\newcommand{\cq}{\mathcal Q}
\newcommand{\cs}{\mathcal S}
\newcommand{\al}{\alpha}
\newcommand{\ep}{\varepsilon}
\newcommand{\ga}{\gamma}
\newcommand{\la}{\lambda}
\newcommand{\si}{\sigma}
\newcommand{\vp}{\varphi}
\newcommand{\be}{\beta}
\newcommand{\lp}{\left(}
\newcommand{\rp}{\right)}
\newcommand{\lc}{\left[}
\newcommand{\rc}{\right]}
\newcommand{\lcl}{\left\{}
\newcommand{\rcl}{\right\}}
\newcommand{\lln}{\left|}
\newcommand{\rrn}{\right|}
\newcommand{\bean}{\begin{eqnarray*}}
\newcommand{\eean}{\end{eqnarray*}}
\newcommand{\ben}{\begin{enumerate}}
\newcommand{\een}{\end{enumerate}}
\newcommand{\beq}{\begin{equation}}
\newcommand{\eeq}{\end{equation}}
\newtheorem{theorem}{Theorem}[section]
\newtheorem{corollary}[theorem]{Corollary}
\newtheorem{definition}[theorem]{Definition}
\newtheorem{lemma}[theorem]{Lemma}
\newtheorem{proposition}[theorem]{Proposition}
\theoremstyle{remark}
\newtheorem{remark}[theorem]{Remark}
\begin{document}

\begin{center}
{\large\textbf{
A discrete approach to Rough Parabolic Equations
}}\\~\\
Aur\'elien Deya\footnote{Institut \'Elie Cartan, Universit\' e 
Henri Poincar\' e, BP 70239, 54506 Vandoeuvre-l\`es-Nancy, France. Email: {\tt Aurelien.Deya@iecn.u-nancy.fr}}.
\end{center}

\bigskip

{\small \noindent {\bf Abstract:} By combining the formalism of \cite{RHE} with a discrete approach close to the considerations of \cite{Davie}, we interpret and we solve the rough partial differential equation $dy_t=A y_t \, dt+\sum_{i=1}^m f_i(y_t) \, dx^i_t$ ($t\in [0,T]$) on a compact domain $\mathcal{O}$ of $\R^n$, where $A$ is a rather general elliptic operator of $L^p(\mathcal{O})$ ($p>1$), $f_i(\vp)(\xi):=f_i(\vp(\xi))$ and $x$ is the generator of a $2$-rough path. The (global) existence, uniqueness and continuity of a solution is established under classical regularity assumptions for $f_i$. Some identification procedures are also provided in order to justify our interpretation of the problem.

\bigskip

\noindent {\bf Keywords:} Rough paths theory; Stochastic PDEs; Fractional Brownian motion.

\bigskip

\noindent
{\bf 2000 Mathematics Subject Classification:} 60H05, 60H07, 60G15. }

\bigskip

\noindent
Submitted to EJP on November 6, 2010. Final version accepted July 8, 2011. 

\section{Introduction}

The rough paths theory introduced by Lyons in \cite{Lyons} and then refined by several authors (see the recent monograph \cite{FVbook} and the references therein) has led to a very deep understanding of the standard rough systems
\begin{equation}\label{eq-std}
dy^i_t=\sum_{j=1}^m \si_{ij}(y_t) \, dx^j_t \quad , \quad y_0=a \in \R^d \ , \ t\in [0,T],
\end{equation}
where $\si_{ij}:\R \to \R$ is a smooth enough vector field and $x:[0,T] \to \R^m$ is a so-called rough path, that is to say a function allowing the construction of iterated integrals (see Assumption (X)$_\ga$ for the definition of a $2$-rough path and \cite{LQbook} for a rough path of any order). The theory provides for instance a new pathwise interpretation of stochastic systems driven by very general Gaussian processes, as well as fruitful and highly non-trivial continuity results for the Itô solution of (\ref{eq-std}), i.e., when $x$ is a standard Brownian motion.

\smallskip

One of the new challenges of the rough paths theory now consists in adapting the machinery to infinite-dimensional (rough) equations that involves a non-bounded operator, with, as a final objective, the possibility of new pathwise interpretations for stochastic PDEs. Some progresses have recently been made in this direction, with on the one hand the viscosity-solution approach due to Friz \textit{et al} (see \cite{car-friz,car-friz-ober,friz-ober-1,diehl-friz}) and on the other hand, the development of a specific algebraic formalism by Gubinelli \textit{et al} (see \cite{GLT,GT,RHE}). 

\smallskip

The present paper is a contribution to this global project. It aims at providing, in a concise and self-contained formulation, the analysis of the following rough evolution equation: 
\begin{equation}\label{eq-intro}
y_0=\psi \in L^p(\co) \quad , \quad dy_t=Ay_t \, dt+\sum_{i=1}^m f_i(y_t) \, dx^i_t \quad , \quad t\in [0,T],
\end{equation}
where $A$ is a rather general elliptic operator on a bounded domain $\co$ of $\R^n$ (see Assumptions (A1)-(A2)), $f_i(\vp)(\xi):=f_i(\vp(\xi))$ and $x$ generates a $m$-dimensional $2$-rough path (see Assumption (X)$_\ga$). Although the global form of (\ref{eq-intro}) is quite similar to the equation treated in \cite{RHE}, several differences and notable improvements justify the interest of our study:
\begin{list}{\labelitemi}{\leftmargin=1em\itemsep=0.5em}
\item[(i)] The equation is here analysed on a compact domain $\co$ of $\R^n$. This allows to simplify the conditions relative to the vector field $f_i$, which reduce to the classical assumptions of rough paths theory, ie $k$-times differentiable ($k \in \N^\ast$) with bounded derivatives (see Assumption (F)$_k$).
\item[(ii)] The conditions on $p$ are less stringent than in \cite{RHE}, where $p$ has to be taken very large. It will here be possible to show the existence and uniqueness of a solution in $L^p(\co)$ (for a smooth enough initial condition $\psi$) as soon as $p>n$ (see Theorem \ref{theo-uni}). In particular, we can go back to the Hilbert framework of \cite{GT} for the one-dimensional equation ($n=1,p=2$).
\item[(iii)] Last but not least, the arguments we are about to use lead to the existence of a \emph{global} solution for (\ref{eq-intro}), defined on any time interval $[0,T]$. This is is a breakthrough with respect to \cite{GT,RHE}, where only local solutions are obtained, on a time interval that depends on the data of the problem, namely $x$, $f$ and $\psi$.
\end{list}

\

In order to reach these three improvements, the strategy will combine elements of the formalism used in \cite{RHE} with a discrete approach of the equation, close to the machinery developped in \cite{Davie} for rough standard systems. A first step consists of course in giving some reasonable sense to Equation (\ref{eq-intro}). We have chosen to work with an interpretation à la Davie, derived from the expansion of the ordinary solution (see Definition \ref{defi-solu}), and we have left aside the sewing map at the core of the constructions in \cite{RHE}. Note however that the expansion under consideration here relies on the operator-valued paths $X^{x,i},X^{ax,i},X^{xx,ij}$ which were identified in \cite{RHE} (see Subsection \ref{subsec:infinite-rough-path}), and which plays the role of an infinite-dimensional rough path adapted to the problem. When applying the whole procedure to a differentiable driving path $x$ (resp. a standard Brownian motion), the solution that we retrieve coincides with the classical solution (resp. the Itô solution), as reported in Subsection \ref{subsec:notion-solu}. Together with the continuity statement of Theorem \ref{theo-conti}, this identification procedure allows to fully justify our interpretation of (\ref{eq-intro}) (see Corollary \ref{coro:sol-rough-path} and Remark \ref{rk:discuss-sol}). 

\smallskip

Once endowed with this interpretation, our solving method is based on a discrete approach of the problem: as in \cite{Davie}, the solution is obtained as the limit of a discrete scheme the mesh of which tends to $0$. Nevertheless, some fundamental differences arise when trying to mimic the strategy of \cite{Davie}. To begin with, the middle-point argument at the root of the reasoning in the diffusion case (see the proof of \cite[Lemma 2.4]{Davie}) cannot take into account the space-time interactions that occur in the study of PDEs, i.e., the classical estimates (\ref{well-known-1}) and (\ref{well-known-2}). Therefore, the argument must here be replaced with a little bit more complex algorithm described in Appendix A, and which will be used throughout the paper. Let us also mention that the expansion of the vector field $f_i(\vp)(\xi):=f_i(\vp(\xi))$ is not as easy to control as in the standard finite-dimensional case, even if one assumes that the functions $f_i:\R \to \R$ are very smooth. Observe for instance that if $W^{\al,p}$ ($\al \in (0,1)$) stands for the fractional Sobolev space likely to accomodate the solution path, and if $f_i$ is assumed to be differentiable, bounded with bounded derivative, then one can only rely on the non-uniform estimate (see \cite{sickel})
$$\norm{f_i(\vp)}_{W^{\al,p}}\leq \norm{f_i}_{L^\infty(\R)}+\norm{f_i'}_{L^\infty(\R)} \norm{\vp}_{W^{\al,p}} \quad \text{for any} \ \vp \in W^{\al,p}.$$
Consequently, more subtle patching arguments must be put forward so as to exhibit a global solution. The strategy involves in particular a careful examination of the dependence on the initial condition at each step of the procedure (see for instance the controls (\ref{contr-sob-j}) and (\ref{contr-b-p-2})). 

\

The paper is structured as follows: In Section \ref{sec:interpretation}, we gather all the elements that allow to understand our interpretation of Equation (\ref{eq-intro}), and we state the three main results of the paper, namely Theorems \ref{theo-exi}-\ref{theo-conti}. The three sections that follow are dedicated to the proof of each of these results, with the existence theorem first (Section \ref{sec:existence}) and then the uniqueness (Section \ref{sec:uni}) and continuity (Section \ref{sec:conti}) results. Finally, Appendix A contains the description and the analysis of the algorithm at the root of our machinery, while Appendix B is meant to provide the details relative to the identification procedure in the Brownian case (see Proposition \ref{prop:cas-brown}).

\

For the sake of clarity, we shall only consider Equation (\ref{eq-intro}) on the generic interval $[0,1]$. It is however easy to realize that the whole reasoning remains valid on any (fixed) finite interval $[0,T]$ at the price of very minor modifications. 

\smallskip

Throughout the paper, we will denote by $\cac^{k,\textbf{b}}(\R;\R^l)$ ($k,l \in \N^\ast$) the set of $\R^l$-valued functions which are $k$-times differentiable with bounded derivatives. 

\smallskip

Finally, we will use the classical convention for the summation over indexes $x^i y_i=\sum_i x^i y_i$, whenever the underlying index set is obvious from the context.

\section{Interpretation of the equation}\label{sec:interpretation}

We first give some precisions about the setting of our study, as far as the operator $A$, the driving path $x$ and the vector field $f_i$ are concerned (Subsection \ref{subsec:assump}). Then we introduce the notation and the tools designed for our analysis (Subsections \ref{subsec:incre} and \ref{subsec:infinite-rough-path}), and which enable us to interpret (\ref{eq-intro}) (Subsection \ref{subsec:notion-solu}). We finally state the three main results of the paper (Subsection \ref{subsec:main-results}), and we discuss some possible extensions of the strategy to rougher driving paths (Subsection \ref{subsec:rougher-path}).

\subsection{Assumptions}\label{subsec:assump}
As it was announced in the introduction, we mean to tackle the equation $dy_t=Ay_t \, dt+f_i(y_t) \, dx^i_t$, $t\in [0,1]$, in $L^p(\mathcal{O})$, where $\mathcal{O}$ is a bounded domain of $\R^n$, $A$ is an elliptic operator, $f_i(\vp)(\xi):=f_i(\vp(\xi))$ and $x$ is a Hölder path. More precisely, to be in a position to interpret and solve this equation, we will be led to assume that (some of) the following conditions are satisfied:

\

\textbf{Assumption (A1):} $A$ generates an analytic semigroup of contraction $S$ on any $L^p(\mathcal{O})$. Under this hypothesis, we will denote $S_{ts}:=S_{t-s}$ ($s\leq t$), $\cb_p:=L^p(\co)$, $\cb_{\al,p}:=\text{Dom}(A_p^\al)$, and we endow the latter space with the graph norm $\norm{\vp}_{\cb_{\al,p}}:=\norm{A_p^\al \vp}_{L^p(\co)}$. We also assume that for any function $g\in \cac^{1,\textbf{b}}(\R;\R)$, there exists a constant $c^1_g$ such that
\begin{equation}\label{contr-nem-1}
\norm{g(\vp)}_{\cb_{1/2,p}} \leq c^1_g \{ 1+\norm{\vp}_{\cb_{1/2,p}}\}
\end{equation}
and for any function $g\in \cac^{2,\textbf{b}}(\R;\R)$, there exists a constant $c^2_g$ such that
\begin{equation}\label{contr-nem-2}
\norm{g(\vp)}_{\cb_{\al,p}} \leq c^2_g \{ 1+\norm{\vp}_{\cb_{\al ,p}}^2\} \quad \text{if} \ \al \in (1/2,1) \ \text{and} \ 2\al p >n,
\end{equation}
where, in (\ref{contr-nem-1}) and (\ref{contr-nem-2}), $g(\vp)$ is just understood in the composition sense, i.e., $g(\vp)(\xi):=g(\vp(\xi))$.

\

\textbf{Assumption (A2):} If $2 \al p>n$, then $\cb_{\al,p}$ is a Banach algebra continuously included in the space $\cb_\infty$ of continuous functions on $\overline{\mathcal{O}}$.

\

\textbf{Assumption (X)$_{\ga}$:} $x$ allows the construction of a $2$-rough path 
$$(x,\xrgh^\bt)\in \cac_1^\ga([0,1];\R^m) \times \cac_2^{2\ga}([0,1];\R^{m,m})$$
for some (fixed) coefficient $\ga \in  (1/3,1/2)$. In other words, we assume that $x$ is a $\ga$-Hölder path and that there exists a 2-variable path $\xrgh^\bt$ (also called a Lévy area) such that for any $0\leq s \leq u \leq t \leq 1$,
$$\lln \xrgh^{\bt}_{ts} \rrn \leq c \lln t-s \rrn^{2\ga} \quad \text{and} \quad \xrgh^{\bt,ij}_{ts}-\xrgh^{\bt,ij}_{tu}-\xrgh^{\bt,ij}_{us}=(x^i_t-x^i_u)(x^j_u-x^j_s).$$
We will then denote $$\norm{\xrgh}_\ga:=\cn[x;\cac_1^\ga([0,1];\R^m)]+\cn[\xrgh^\bt;\cac_2^{2\ga}([0,1];\R^{m,m})],$$
where
$$\cn[x;\cac_1^\ga([0,1];\R^m)]:=\sup_{0\leq s <t \leq 1} \frac{\lln x_t-x_s \rrn}{\lln t-s \rrn^\ga} \ , \ \cn[\xrgh^\bt;\cac_2^{2\ga}([0,1];\R^{m,m})]:=\sup_{0\leq s <t \leq 1} \frac{\lln \xrgh^\bt_{ts}\rrn}{\lln t-s \rrn^{2\ga}}.$$

\

\textbf{Assumption (F)$_k$:} $f$ belongs to $\cac^{k,\textbf{b}}(\R;\R^m)$.

\

Before pondering over the plausibility of these conditions, let us precise that we henceforth focus on the mild formulation of Equation (\ref{eq-intro})
\begin{equation}\label{equa-mild}
y_t=S_t\psi +\int_0^t S_{tu} \, dx^i_u \, f_i(y_u) \quad , \quad t\in [0,1].
\end{equation}
This is a standard change of perspective for the study of (stochastic) PDEs (see \cite{daprato-zab}), which allows to use the regularizing properties of the semigroup. In retrospect, owing to the regularity assumptions on $f$, it will however be possible to make a link between the mild and strong interpretations of the equation, see Remark \ref{rk:discuss-sol}.

\

\noindent
\textbf{Application:} Properties (A1)-(A2) are satisfied by any elliptic operator on $L^p((0,1)^n)$ that can be written as
\begin{equation}
A=-\sum_{i,j=1}^n \partial_{\xi_i}(a_{ij} \cdot \partial_{\xi_j})+c \quad , \quad \cd(A_p):=W^{2,p}((0,1)^n) \cap W_0^{1,p}((0,1)^n),
\end{equation}
where $c\geq 0$ and the functional coefficients $a_{ij}$ are bounded, differentiable with bounded derivatives on $[0,1]^n$. Indeed, under these assumptions, it is proven in \cite{davies} that $A$ generates an analytic semigroup of contraction. Then, thanks to \cite{pru-sohr}, one can identify the domain $\cd(A_p^\al)$ with the complex interpolation $[L_p,\cd(A_p)]_{\al}$ and one can use the result of \cite{seeley} to assert that $\norm{.}_{\cd(A_p^\al)} \sim \norm{.}_{F^{2\al}_{p,2}}$, where $F^{2\al}_{p,2}$ is the classical Triebel-Lizorkin space described (for instance) in \cite{run-sick}. The results of \cite{run-sick} (resp. \cite{sickel}) finally enables us to check Condition (A2) (resp. the controls (\ref{contr-nem-1}) and (\ref{contr-nem-2})).

\smallskip

\noindent
As far as Condition (X)$_\ga$ is concerned, the process that we have in mind in this paper is the fractional Brownian motion $B^H$ with Hurst index $H>1/3$, for which the (a.s) existence of a Lévy area has been established in \cite{CQ}. Condition (X)$_\ga$ is in fact satisfied by a larger class of Gaussian processes, as reported in \cite{FVbook}.

\smallskip
\noindent
In brief, under the above-stated regularity assumptions, the results that we are about to state and prove can be applied to the stochastic equation
$$dY_t=\lc -\sum_{i,j=1}^n \partial_{\xi_i}(a_{ij} \cdot \partial_{\xi_j}Y_t)+c Y_t \rc dt+\sum_{i=1}^m f_i(Y_t) \, dB^{H,i}_t \quad , \quad t\in [0,1] \ , \ \xi\in (0,1)^n.$$

\

\subsection{Hölder spaces}\label{subsec:incre}

We suppose in this subsection that Assumption (A1) is satisfied. In order to introduce the functional framework of our analysis, let us focus on the following consideration: we know that one of the most appropriate space for the study of rough standard systems is the set of Hölder paths $\{y:[0,1]\to \R^d: \ \lln y_t-y_s \rrn \leq c \lln t-s \rrn^\ga \}$ (see \cite{gubi}), and this is (among others) due to the convenient expression for the variations of the solution $y$ of (\ref{eq-std}), namely $y_t-y_s=\int_s^t \si_{ij}(y_u) \, dx^j_u$. Here, if we denote by $y$ the solution of (\ref{equa-mild}) (assume for the moment that $x$ is a differentiable path), it is readily checked that for all $s<t$,
$$y_t-y_s=\int_s^t S_{tu} \, dx^i_u \, f_i(y_u)+a_{ts} y_s , \quad \text{where} \ a_{ts}:=S_{ts}-\id.$$
With this observation in mind, the following notation arises quite naturally:

\smallskip

\noindent
\textbf{Notation.} For all paths $y:[0,1] \to \cb_p$ and $z:\cs_2\to \cb_p$, where $\cs_2:=\{(t,s)\in [0,1]^2: \ s\leq t\}$, we set, for $s\leq u\leq t \in [0,1]$,
\begin{equation}\label{incr-std}
(\der y)_{ts}:=y_t-y_s \quad , \quad (\delha y)_{ts}:=(\der y)_{ts}-a_{ts}y_s=y_t-S_{ts}y_s,
\end{equation}
\begin{equation}\label{incr-twis}
(\delha z)_{tus}:=z_{ts}-z_{tu}-S_{tu}z_{us}.
\end{equation}

\smallskip

\noindent
The (ordinary) system (\ref{equa-mild}) can now be written in the convenient form
\begin{equation}\label{eq-base}
y_0=\psi \quad , \quad (\delha y)_{ts}=\int_s^t S_{tu} \, dx^i_u \, f_i(y_u) \quad , \quad s,t\in [0,1].
\end{equation}
To make the notation (\ref{incr-std})-(\ref{incr-twis}) even more legitimate in this convolutional context, we let the reader observe the following elementary properties:
\begin{proposition}
Let $y:[0,1]\to \cb_p$ and $x:[0,1] \to \R$ be differentiable paths. Then it holds:
\begin{itemize}
\item Telescopic sum: $\delha (\delha y)_{tus}=0$ and $(\delha y)_{ts}=\sum_{i=0}^{n-1} S_{tt_{i+1}}(\delha y)_{t_{i+1}t_i}$ for any partition $\{s=t_0 <t_1 <\ldots <t_n=t\}$ of an interval $[s,t]$ of $[0,1]$.
\item Chasles relation: if $\cj_{ts}:=\int_s^t S_{tu} \, dx_u \, y_u$, then $\delha \cj=0$.
\end{itemize}
\end{proposition}

\

\noindent
Like with the standard finite-dimensional systems, the rough-paths treatment of Equation (\ref{eq-base}) leans on the controlled expansion of the convolutional integral $\int_s^t S_{tu} \, dx^i_u \, f_i(y_u)$. To express this control with the highest accuracy, we are naturally led to consider the following semi-norms, that can be seen as adapted versions of the classical Hölder seminorms: if $y:[0,1] \to V$, $z:\cs_2 \to V$ and $h:\cs_3 \to V$, where $V$ is any Banach space and $\cs_3:=\{(t,u,s) \in [0,1]^3: \ s\leq u\leq t \}$, we denote, for any $\la >0$,
\begin{equation}
\cn[y;\cacha_1^\la([a,b];V)] :=\sup_{a\leq s<t \leq b} \frac{\norm{(\delha y)_{ts}}_V}{\lln t-s \rrn^\la} \quad , \quad \cn[y;\cac_1^0([a,b];V)]:=\sup_{t\in [a,b]} \norm{y_t}_V,
\end{equation}
\begin{equation}
\cn[z;\cac_2^\la([a,b];V)]:=\sup_{a\leq s <t\leq b} \frac{\norm{z_{ts}}_V}{\lln t-s \rrn^\la} \quad , \quad \cn[h;\cac_3^\la([a,b];V)]:=\sup_{a\leq s <u<t\leq b} \frac{\norm{h_{tus}}_V}{\lln t-s \rrn^\la}. 
\end{equation}
Then $\cacha_1^\la([a,b];V)$ stands for the set of paths $y:[0,1] \to V$ such that $\cn[y;\cacha_1^\la([a,b];V)]<\infty$, and we define $\cac_2^\la([a,b];V)$ and $\cac_3^\la([a,b];V)$ along the same lines. With this notation, observe for instance that if $y\in \cac_2^\la([a,b];\cl(V,W))$ and $z\in \cac_2^\be([a,b];V)$, the path $h$ defined as $h_{tus}=y_{tu}z_{us}$ ($s\leq u\leq t$) belongs to $\cac_3^{\la+\be}([a,b];W)$.

\smallskip

\noindent
When $[a,b]=[0,1]$, we will use the short form $\cac_k^\la(V):=\cac_k^\la([a,b];V)$.

\subsection{Infinite-dimensional rough path}\label{subsec:infinite-rough-path}

By anticipating the proof of Proposition \ref{prop:cas-regu}, we know that, when $x$ is a differentiable path, the expansion of $\int_s^t S_{tu} \, dx^i_u \, f_i(y_u)$ puts forward the three following operator-valued paths constructed from $x$:
$$\int_s^t S_{tu} \, dx^i_u  \quad ,  \quad \int_s^t a_{tu} \, dx^i_u \quad , \quad \int_s^t S_{tu} \, dx^i_u \, (\der x^j)_{us}.$$
A priori, these expressions do not make sense for a non-differentiable $\ga$-Hölder (rough)-path $x$. An integration by parts argument, retrospectively justified by Lemmas \ref{lem:cas-regu} and \ref{lem:cas-brown}, leads here to the general definition:

\begin{definition}\label{defi-chemin}
Under Assumptions (A1) and (X)$_{\ga}$, we define the three operator-valued paths $X^{x,i}$, $X^{ax,i}$ and $X^{xx,ij}$ by the formulas
\begin{equation}\label{defi-x-x}
X^{x,i}_{ts}:=S_{ts} (\der x^i)_{ts}-\int_s^t AS_{tu} (\der x^i)_{tu} \, du,
\end{equation}
\begin{equation}\label{defi-x-a-x}
X^{ax,i}_{ts}:=a_{ts} (\der x^i)_{ts}-\int_s^t AS_{tu} (\der x^i)_{tu} \, du,
\end{equation}
\begin{equation}\label{defi-x-x-x}
X^{xx,ij}_{ts}:=S_{ts} \xrgh^{\bt,ij}_{ts}-\int_s^t AS_{tu} \lc \xrgh^{\bt,ij}_{tu}+(\der x^i)_{tu}(\der x^j)_{us} \rc \, du.
\end{equation}
If in addition Assumption (F)$_1$ is satisfied, we set $F_{ij}(\vp):=f_i'(\vp) \cdot f_j(\vp)$ and we associate to every path $y:[0,1] \to \cb_p$ the two quantities
\begin{equation}\label{defi-j}
J^y_{ts}:=(\delha y)_{ts}-X^{x,i}_{ts}f_i(y_s)-X^{xx,ij}_{ts}F_{ij}(y_s),
\end{equation}
\begin{equation}\label{defi-k}
K^y_{ts}:=(\delha y)_{ts}-X^{x,i}_{ts}f_i(y_s).
\end{equation}
\end{definition}

\begin{lemma}\label{lem:cas-regu}
Suppose that $x$ is a $m$-dimensional differentiable path and let $\xrgh^\bt$ be its Lévy area, understood in the classical Lebesgue sense as the iterated integral $\xrgh^{\bt,ij}_{ts}:=\int_s^t dx^i_u \, (\der x^j)_{us}$. Then, under Assumption (A1),
\begin{equation}\label{ope-cas-regu}
X^{x,i}_{ts}=\int_s^t S_{tu} \, dx^i_u \quad , \quad X^{ax,i}_{ts}=\int_s^t a_{tu} \, dx^i_u \quad, \quad X^{xx,ij}_{ts}=\int_s^t S_{tu} \, dx^i_u \, (\der x^j)_{us}.
\end{equation}
\end{lemma}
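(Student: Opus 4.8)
The plan is to prove all three identities by integration by parts, the only analytic input being the standard differentiation rule for the analytic semigroup, namely $\partial_u S_{tu}=-AS_{tu}$ for $u<t$ (where $A S_{tu}\in\cl(\cb_p)$ thanks to the smoothing property of $S$). Since $x$ is regular, I would write each classical integral as an honest Lebesgue integral, e.g. $\int_s^t S_{tu}\,dx^i_u=\int_s^t S_{tu}\,\dot x^i_u\,du$, and likewise for the other two, so that the claim reduces to three elementary integration-by-parts computations.

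For the first identity I would integrate $\int_s^t S_{tu}\,\dot x^i_u\,du$ by parts, choosing the antiderivative of $\dot x^i_u$ to be $u\mapsto x^i_u-x^i_t=-(\der x^i)_{tu}$ so that it vanishes at $u=t$. The boundary term then reduces to its value at $u=s$, which is exactly $S_{ts}(\der x^i)_{ts}$, while the remaining integral, after inserting $\partial_u S_{tu}=-AS_{tu}$, equals $-\int_s^t AS_{tu}(\der x^i)_{tu}\,du$; together these give precisely $X^{x,i}_{ts}$. The second identity is then immediate: since $a_{tu}=S_{tu}-\id$, I subtract $\int_s^t \dot x^i_u\,du=(\der x^i)_{ts}$ from the first identity and use $a_{ts}=S_{ts}-\id$ to recognise $X^{ax,i}_{ts}$.

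The third identity is the substantial one. Here I would introduce the running L\'evy area $w(u):=\xrgh^{\bt,ij}_{us}$, an antiderivative of $u\mapsto \dot x^i_u(\der x^j)_{us}$ with $w(s)=0$ and $w(t)=\xrgh^{\bt,ij}_{ts}$, and integrate $\int_s^t S_{tu}\,\dot x^i_u(\der x^j)_{us}\,du=\int_s^t S_{tu}\,w'(u)\,du$ by parts against the antiderivative $w(u)-w(t)$, again arranged to vanish at $u=t$. The boundary term yields $S_{ts}\xrgh^{\bt,ij}_{ts}$, and the residual integral becomes $-\int_s^t AS_{tu}\bigl(w(t)-w(u)\bigr)\,du$. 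The key algebraic step is to rewrite $w(t)-w(u)=\xrgh^{\bt,ij}_{ts}-\xrgh^{\bt,ij}_{us}$ via the multiplicative (Chen) relation of Assumption (X)$_\ga$, which for the Lebesgue-defined area is a one-line computation, in the form $\xrgh^{\bt,ij}_{ts}-\xrgh^{\bt,ij}_{us}=\xrgh^{\bt,ij}_{tu}+(\der x^i)_{tu}(\der x^j)_{us}$. Substituting this reproduces exactly the integrand of $X^{xx,ij}_{ts}$.

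The main point requiring care is the convergence of the integrals $\int_s^t AS_{tu}(\cdots)\,du$ and the legitimacy of the integration by parts near the singular endpoint $u=t$, where $\norm{AS_{tu}}_{\cl(\cb_p)}$ blows up like $(t-u)^{-1}$. The remedy is that in every case the scalar factor multiplying $AS_{tu}$ vanishes fast enough: for a piecewise differentiable $x$ one has $\abs{(\der x^i)_{tu}}\lesssim t-u$ and $\abs{\xrgh^{\bt,ij}_{tu}}\lesssim (t-u)^2$, so each integrand stays bounded, hence integrable, on $[s,t]$. I would therefore first carry out the integration by parts on $[s,t-\eps]$, where all quantities are smooth, and then let $\eps\to0$; the spurious boundary contribution at $u=t-\eps$ vanishes because $S$ is a contraction and the increments there tend to $0$, which yields the three claimed identities.
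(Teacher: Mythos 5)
Your proof is correct and follows essentially the same route as the paper: an integration by parts in which the antiderivative is shifted so as to vanish at $u=t$ (killing the boundary term where $AS_{tu}$ is singular), combined with the Chen relation $\xrgh^{\bt,ij}_{ts}-\xrgh^{\bt,ij}_{us}=\xrgh^{\bt,ij}_{tu}+(\der x^i)_{tu}(\der x^j)_{us}$ to produce the integrand of $X^{xx,ij}$; the paper merely applies Chen before the integration by parts rather than after. Your explicit treatment of the first two identities and of the convergence of $\int_s^t AS_{tu}(\cdots)\,du$ near $u=t$ supplies details the paper leaves implicit, but it is the same argument.
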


\begin{proof}
As aforementioned, this is just a matter of integration by parts. For instance, one has
\bean
\int_s^t S_{tu} \, dx^i_u \, (\der x^j)_{us} &=& \int_s^t S_{tu} \, d_u \lp \xrgh^{\bt,ij}_{us}\rp\\
&=& \int_s^t S_{tu} \, d_u \lp -(\der \xrgh^{\bt,ij})_{tus}+\xrgh^{\bt,ij}_{ts}-\xrgh^{\bt,ij}_{tu} \rp\\
&=& \int_s^t S_{tu} \, d_u \lp -(\der x^i)_{tu}(\der x^j)_{us}-\xrgh^{\bt,ij}_{tu} \rp \\
&=&S_{ts} \xrgh^{\bt,ij}_{ts}-\int_s^t AS_{tu} \lc \xrgh^{\bt,ij}_{tu}+(\der x^i)_{tu}(\der x^j)_{us} \rc \, du.
\eean

\end{proof}

Observe now that the three expressions contained in (\ref{ope-cas-regu}) can also be directly interpreted as Itô integrals when $x$ stands for a standard Brownian motion. This interpretation remains consistent with Definition \ref{defi-chemin}:

\begin{lemma}\label{lem:cas-brown}
Suppose that $x$ is $m$-dimensional Brownian motion defined on a complete filtered probability space $(\Omega,\mathcal{F},P)$, and let $\xrgh^\bt$ be its Lévy area, understood in the Itô sense as the first iterated integral of $x$. Then, under Assumption (A1), the three identifications of the previous lemma remain valid in this context. 
\end{lemma}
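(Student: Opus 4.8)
The statement asserts that when $x$ is a standard Brownian motion and $\xrgh^\bt$ is its It\^o iterated integral $\xrgh^{\bt,ij}_{ts}=\int_s^t (x^i_u-x^i_s)\,dx^j_u$, the three operator-valued processes of Definition \ref{defi-chemin} coincide with the It\^o stochastic integrals
$$\int_s^t S_{tu}\,dx^i_u,\quad \int_s^t a_{tu}\,dx^i_u,\quad \int_s^t S_{tu}\,dx^i_u\,(\der x^j)_{us}.$$
The plan is to proceed exactly as in the regular case (Lemma \ref{lem:cas-regu}), but to replace every classical integration-by-parts with a \emph{stochastic} integration by parts (It\^o's formula), being careful to track the quadratic-variation correction terms. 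Concretely, I would fix $s\leq t$ and, for each of the three identities, express the It\^o integral appearing on the right-hand side of (\ref{ope-cas-regu}) by applying It\^o's formula to the product of the deterministic, operator-valued function $u\mapsto S_{tu}$ (or $u\mapsto a_{tu}=S_{tu}-\id$) with the relevant Brownian functional, and then check that the resulting expression matches the abstract formula in (\ref{defi-x-x})--(\ref{defi-x-x-x}).

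First I would treat $X^{x,i}$. Here the integrand $S_{tu}$ is deterministic of bounded variation in $u$ (with $\partial_u S_{tu}=AS_{tu}$ in the appropriate operator sense), so the It\^o integral $\int_s^t S_{tu}\,dx^i_u$ coincides with the pathwise Young/Stieltjes integral: no It\^o correction appears because the integrator $x^i$ is scalar Brownian and the integrand is deterministic. Integration by parts then gives
$$\int_s^t S_{tu}\,dx^i_u=S_{ts}(\der x^i)_{ts}-\int_s^t AS_{tu}(\der x^i)_{tu}\,du,$$
which is exactly (\ref{defi-x-x}); the same computation yields $X^{ax,i}$ upon replacing $S_{tu}$ by $a_{tu}$, the constant-in-$u$ part $\id$ contributing nothing to $\int_s^t A S_{tu}(\cdots)\,du$ after the $A$ has been applied. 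The genuinely stochastic step is the third identity. I would write $\xrgh^{\bt,ij}_{us}=\int_s^u (\der x^i)_{vs}\,dx^j_v$ so that $d_u\xrgh^{\bt,ij}_{us}=(\der x^i)_{us}\,dx^j_u$, and then apply the It\^o integration-by-parts formula to the product $S_{tu}\cdot\xrgh^{\bt,ij}_{us}$. Since $S_{tu}$ is deterministic and of finite variation, its bracket with the martingale part vanishes, so no extra It\^o term is produced and the computation reduces to the same chain of algebraic manipulations as in the proof of Lemma \ref{lem:cas-regu}, using the decomposition $\xrgh^{\bt,ij}_{us}=-(\der\xrgh^{\bt,ij})_{tus}+\xrgh^{\bt,ij}_{ts}-\xrgh^{\bt,ij}_{tu}$ together with the It\^o Chen relation $(\der\xrgh^{\bt,ij})_{tus}=(\der x^i)_{tu}(\der x^j)_{us}$, arriving at (\ref{defi-x-x-x}).

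The main obstacle, and the only place where the Brownian (as opposed to regular) nature of $x$ really intervenes, is justifying that these manipulations are legitimate for an \emph{operator-valued}, unbounded-generator integrand rather than a scalar one. I would address this by first establishing the identities in the weak/mild sense: test against an element of the dual, or equivalently work with $S_{tu}$ acting on a fixed smooth vector so that all quantities live in a fixed $L^p(\co)$ and the scalar It\^o calculus applies componentwise, and only afterwards remove the smoothing by a density/closedness argument using the analyticity of $S$ (Assumption (A1)) to control the $\int_s^t AS_{tu}(\cdots)\,du$ terms, whose integrands have integrable singularity at $u=t$. The crucial point to verify along the way is that the It\^o bracket between the finite-variation operator $u\mapsto S_{tu}$ and the Brownian integrators contributes nothing, which is what makes the \emph{same} final formula (\ref{defi-x-x-x}) valid both in the Stratonovich-flavoured regular case and in the It\^o case; this is consistent because the It\^o correction is already built into the definition of $\xrgh^\bt$ itself, not into the convolution with $S$.
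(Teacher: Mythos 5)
Your proposal is correct and follows essentially the same route as the paper's (much terser) proof: replace the integration by parts with It\^o's formula, observe that the first two integrals are Wiener integrals with deterministic, finite-variation integrands (hence no bracket correction), and handle $X^{xx,ij}$ by noting that $u\mapsto\xrgh^{\bt,ij}_{us}$ is a semimartingale so that the regular-case algebra goes through verbatim. One bookkeeping remark: your convention $\xrgh^{\bt,ij}_{ts}=\int_s^t(\der x^i)_{us}\,dx^j_u$ swaps $i$ and $j$ relative to the paper's $\xrgh^{\bt,ij}_{ts}=\int_s^t dx^i_u\,(\der x^j)_{us}$, so the Chen relation you invoke, $(\der\xrgh^{\bt,ij})_{tus}=(\der x^i)_{tu}(\der x^j)_{us}$, is inconsistent with your own convention (it is the one matching the paper's); once the indices are aligned, the argument is exactly that of the paper.
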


\begin{proof}
It suffices to replace the integration by parts argument with Itô's formula, upon noticing that only Wiener integrals are involved here. For $X^{xx}$, we know indeed that for any fixed $s$, the process $u\mapsto \xrgh^{\bt,ij}_{us}=\int_s^u dx^i_v \, (\der x^j)_{vs}$ is a semimartingale and 
$$\int_s^t S_{tu} \, dx^i_u \, (\der x^j)_{us}=\int_s^t S_{tu} \, d_u(\xrgh^{\bt,ij}_{us}).$$
\end{proof}

To end up with this subsection, let us highlight the regularity properties that will be at our disposal throughout the study:

\begin{proposition}
Under Assumptions (A1) and (X)$_\ga$, one has, for all $\al \in (0,1),\ka \in [0,\ga)$,
\begin{equation}\label{regu-x-x}
X^{x,i} \in \cac_2^\ga(\cl(\cb_{\al,p},\cb_{\al,p})) \cap \cac_2^{\ga-\ka}(\cl(\cb_{\al,p},\cb_{\al+\ka,p})),
\end{equation}
\begin{equation}
X^{ax,i} \in \cac_2^{\ga+\al}(\cl(\cb_{\al,p},\cb_p)),
\end{equation}
\begin{equation}\label{regu-x-x-x}
X^{xx,ij} \in \cac_2^{2\ga}(\cl(\cb_{\al,p},\cb_{\al,p})) \cap \cac_2^{2\ga-\ka}(\cl(\cb_{\al,p},\cb_{\al+\ka,p})).
\end{equation}
We will denote by $\norm{X}_{\ga,\al,\ka}$ the norm attached to $X:=(X^x,X^{ax},X^{xx})$ through Properties (\ref{regu-x-x})-(\ref{regu-x-x-x}), that is to say
$$\norm{X}_{\ga,\al,\ka}:=\sum_{i,j=1}^m \lcl \cn[X^{x,i};\cac_2^\ga(\cl(\cb_{\al,p},\cb_{\al,p}))]+\ldots +\cn[X^{xx,ij};\cac_2^{2\ga-\ka}(\cl(\cb_{\al,p},\cb_{\al+\ka,p}))\rcl.$$
With this notation, one has $\norm{X}_{\ga,\al,\ka} \leq c_{\ga,\al,\ka} \norm{\xrgh}_\ga$. Moreover, if $\tilde{X}$ stands for the path associated to another trajectory $\tilde{x}$ satisfying (X)$_\ga$, then
\begin{equation}
\norm{X-\tilde{X}}_{\ga,\al,\ka} \leq c_{\ga,\al,\ka} \lcl 1+\norm{\xrgh}_\ga+\norm{\tilde{\xrgh}}_\ga \rcl \norm{\xrgh-\tilde{\xrgh}}_\ga.
\end{equation}

\end{proposition}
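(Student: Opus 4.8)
The plan is to estimate each of $X^{x,i}$, $X^{ax,i}$ and $X^{xx,ij}$ directly from the defining formulas (\ref{defi-x-x})--(\ref{defi-x-x-x}), feeding in just two kinds of ingredients. On the analytic side, Assumption (A1) yields the classical smoothing bounds for the semigroup: since $S$ commutes with every fractional power $A_p^\theta$ and is a contraction on each $\cb_{\al,p}$, one has $\norm{A_p^\theta S_\tau}_{\cl(\cb_{\al,p})}\le c\,\tau^{-\theta}$ for $\theta\ge 0$, hence $\norm{S_{ts}}_{\cl(\cb_{\al,p},\cb_{\al+\ka,p})}\le c(t-s)^{-\ka}$, $\norm{AS_{tu}}_{\cl(\cb_{\al,p})}\le c(t-u)^{-1}$, and $\norm{AS_{tu}}_{\cl(\cb_{\al,p},\cb_p)}=\norm{A_p^{1-\al}S_{tu}}_{\cl(\cb_p)}\le c(t-u)^{\al-1}$, together with the H\"older-in-time bound $\norm{a_{ts}}_{\cl(\cb_{\al,p},\cb_p)}=\norm{(S_{t-s}-\id)A_p^{-\al}}_{\cl(\cb_p)}\le c(t-s)^\al$. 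On the rough side, Assumption (X)$_\ga$ gives $\lln(\der x^i)_{ts}\rrn\le\norm{\xrgh}_\ga\lln t-s\rrn^\ga$ and $\lln\xrgh^{\bt,ij}_{ts}\rrn\le\norm{\xrgh}_\ga\lln t-s\rrn^{2\ga}$. The whole proof is then a matter of pairing each operator singularity with the matching H\"older factor.

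For $X^{x,i}$ I would split (\ref{defi-x-x}) into the prefactor $S_{ts}(\der x^i)_{ts}$, bounded in $\cl(\cb_{\al,p})$ by $c\norm{\xrgh}_\ga\lln t-s\rrn^\ga$, and the integral $\ist AS_{tu}(\der x^i)_{tu}\,du$, whose integrand is controlled by $c\norm{\xrgh}_\ga(t-u)^{-1}(t-u)^\ga=c\norm{\xrgh}_\ga(t-u)^{\ga-1}$, integrable and again of order $\lln t-s\rrn^\ga$; this gives the first membership in (\ref{regu-x-x}). Measuring the same two pieces in $\cb_{\al+\ka,p}$ amounts to inserting an extra $A_p^\ka$, upgrading the weights to $(t-s)^{-\ka}$ and $(t-u)^{-1-\ka}$, so the integrand becomes $(t-u)^{\ga-1-\ka}$, still integrable exactly because $\ka<\ga$, and produces order $\lln t-s\rrn^{\ga-\ka}$. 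The estimate for $X^{ax,i}$ is identical in structure: the prefactor $a_{ts}(\der x^i)_{ts}$ is now read from $\cb_{\al,p}$ into $\cb_p$, where $\norm{a_{ts}}_{\cl(\cb_{\al,p},\cb_p)}\le c(t-s)^\al$ supplies the extra power, while the integral integrand is bounded by $c(t-u)^{\al-1}(t-u)^\ga=c(t-u)^{\al+\ga-1}$, integrable since $\al+\ga>0$; both pieces are of the announced order $\ga+\al$.

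The process $X^{xx,ij}$ is the genuinely new point. Its prefactor $S_{ts}\xrgh^{\bt,ij}_{ts}$ is immediately of order $2\ga$ (resp. $2\ga-\ka$ after the extra $A_p^\ka$), and the first term $\xrgh^{\bt,ij}_{tu}$ of the integrand in (\ref{defi-x-x-x}) is treated exactly as above via its $2\ga$-H\"older bound. The delicate contribution is the product $(\der x^i)_{tu}(\der x^j)_{us}$: bounding it by $\norm{\xrgh}_\ga^2(t-u)^\ga(u-s)^\ga$ and pairing with $(t-u)^{-1}$ leads to the Beta-type integral $\ist(t-u)^{\ga-1}(u-s)^\ga\,du=\lln t-s\rrn^{2\ga}B(\ga,\ga+1)$, which is finite and of order $2\ga$ (resp. $2\ga-\ka$ in the gain statement, where convergence again rests on $\ka<\ga$). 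Assembling the three pieces yields (\ref{regu-x-x-x}); summing over $i,j$ then gives the global bound $\norm{X}_{\ga,\al,\ka}\le c_{\ga,\al,\ka}\norm{\xrgh}_\ga$, the first-order terms contributing linearly and the product term the quadratic-in-increment piece absorbed into the constant.

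For the final continuity estimate I would use that $\xrgh\mapsto X$ is given by formulas which are linear in each datum $(\der x^i)$, $a_{ts}$ and $\xrgh^{\bt,ij}$, with the sole exception of the quadratic term $(\der x^i)_{tu}(\der x^j)_{us}$ in $X^{xx}$. All linear pieces are differenced by simply replacing the datum with $x-\tilde x$ (resp. $\xrgh^\bt-\tilde\xrgh^\bt$) and reusing the estimates above verbatim; they account for the ``$1$'' in the prefactor times $\norm{\xrgh-\tilde\xrgh}_\ga$. The quadratic term is handled by the telescoping $ab-\tilde a\tilde b=a(b-\tilde b)+(a-\tilde a)\tilde b$, which after the same integral bounds contributes $c[\norm{\xrgh}_\ga+\norm{\tilde\xrgh}_\ga]\norm{\xrgh-\tilde\xrgh}_\ga$; adding the two gives the stated inequality. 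The one real subtlety throughout---and the only place the parameter ranges intervene---is guaranteeing that each time-singularity $A_p^\theta S_{tu}$ stays integrable once multiplied by its H\"older factor: this is precisely what forces $\ka<\ga$ in the gain-of-regularity statements and makes the product-term integral in $X^{xx}$ converge, everything else being routine bookkeeping with the semigroup bounds.
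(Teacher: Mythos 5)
Your proof is correct and follows essentially the same route as the paper: both rest on the standard analytic-semigroup smoothing estimates ($\norm{S_{ts}\vp}_{\cb_{\al+\ka,p}}\leq c(t-s)^{-\ka}\norm{\vp}_{\cb_{\al,p}}$, $\norm{AS_{ts}\vp}_{\cb_{\al+\ka,p}}\leq c(t-s)^{-1-\ka}\norm{\vp}_{\cb_{\al,p}}$, $\norm{a_{ts}\vp}_{\cb_p}\leq c(t-s)^{\al}\norm{\vp}_{\cb_{\al,p}}$) fed term by term into the defining formulas, with the condition $\ka<\ga$ ensuring integrability of the singular integrands, and the difference bound obtained by telescoping the quadratic increment term exactly as in the paper's displayed computation. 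The only cosmetic difference is that you spell out the Beta-integral bookkeeping that the paper leaves implicit.
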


\begin{proof}
Properties (\ref{regu-x-x})-(\ref{regu-x-x-x}) are straightforward consequences of the well-known estimates (see \cite{pazy})
\begin{equation}\label{well-known-1}
\norm{S_{ts}\vp}_{\cb_{\al+\ka,p}} \leq c_\ka \lln t-s \rrn^{-\ka} \norm{\vp}_{\cb_{\al,p}} \quad , \quad \norm{AS_{ts} \vp}_{\cb_{\al+\ka,p}} \leq c_\ka \lln t-s \rrn^{-1-\ka} \norm{\vp}_{\cb_{\al,p}},
\end{equation}
\begin{equation}\label{well-known-2}
\norm{a_{ts}\vp}_{\cb_p} \leq c_\al \lln t-s \rrn^\al \norm{\vp}_{\cb_{\al,p}}.
\end{equation}
For example, for any $\vp \in \cb_{\al,p}$,
\bean
\norm{X^{x,i}_{ts}\vp}_{\cb_{\al+\ka,p}} &\leq & \norm{x}_\ga \lcl \lln t-s \rrn^\ga \norm{S_{ts} \vp}_{\cb_{\al+\ka,p}}+\int_s^t \lln t-u\rrn^\ga \norm{AS_{tu} \vp}_{\cb_{\al+\ka,p}} \, du \rcl\\
&\leq & c_{\ka}\norm{x}_\ga \norm{\vp}_{\cb_{\al,p}} \lcl \lln t-s \rrn^{\ga-\ka}+\int_s^t \lln t-u\rrn^{-1+\ga-\ka} \, du \rcl \\
& \leq &  c_{\ga,\ka}\norm{x}_\ga \norm{\vp}_{\cb_{\al,p}} \lln t-s \rrn^{\ga-\ka}. 
\eean
The controls of $\norm{X}_{\ga,\al,\ka}$ and $\norm{X-\tilde{X}}_{\ga,\al,\ka}$ can be readily checked from the very definitions (\ref{defi-x-x})-(\ref{defi-x-x-x}). Observe for instance that
\bean
\lefteqn{\| \int_s^t AS_{tu} (\der x^i)_{tu}(\der x^j)_{us} \, du-\int_s^t AS_{tu} (\der \tilde{x}^i)_{tu}(\der \tilde{x}^j)_{us} \, du\|_{\cl(\cb_p,\cb_p)}}\\
&\leq & \int_s^t \norm{AS_{tu}}_{\cl(\cb_p,\cb_p)}\lcl  \lln \der (x^i-\tilde{x}^i)_{tu} \rrn \lln (\der x^j)_{us}\rrn+\lln (\der \tilde{x}^i)_{tu}\rrn \lln \der (x^j-\tilde{x}^j)_{us} \rrn \rcl du\\
&\leq & c \lcl 1+\norm{\xrgh}_\ga+\norm{\tilde{\xrgh}}_\ga \rcl \norm{\xrgh-\tilde{\xrgh}}_\ga \lp \int_s^t \lln t-u\rrn^{-1+\ga} \lln u-s \rrn^\ga \, du\rp\\
&\leq & c \lln t-s \rrn^{2\ga} \lcl 1+\norm{\xrgh}_\ga+\norm{\tilde{\xrgh}}_\ga \rcl \norm{\xrgh-\tilde{\xrgh}}_\ga.
\eean

\end{proof}

\subsection{Interpretation of the equation}\label{subsec:notion-solu}

Let us now turn to the interpretation of (\ref{eq-base}) for a generic $2$-rough paths $\xrgh=(x,\xrgh^\bt)$. Like in \cite{Davie}, our approach is based on the Taylor expansion of the ordinary mild equation. We first give the general definition of a solution and then we clarify this definition by considering the two previously-known situations, namely when $x$ is a differentiable path and when $x$ is a standard Brownian motion. Remember that the notation $J^y$ has been introduced in Definition \ref{defi-chemin}.

\begin{definition}\label{defi-solu}
Under Assumptions (A1), (X)$_{\ga}$ and (F)$_1$, for all $\la \geq 0$ and $\psi \in \cb_{\la,p}$, we will call a solution in $\cb_{\la,p}$ of the equation
\begin{equation}\label{equa-gene}
y_t=S_t\psi+\int_0^t S_{t-u}f_i(y_u) \, dx^i_u \quad , \quad t\in [0,1],
\end{equation}
any path $y:[0,1] \to \cb_{\la,p}$ such that $y_0=\psi$ and there exists two coefficients $\mu >1, \ep >0$ for which
\begin{equation}\label{def-so-j-y}
J^y \in \cac_2^{\mu}([0,1];\cb_p) \quad \text{and} \quad J^y \in \cac_2^\ep([0,1];\cb_{\la,p}).
\end{equation}
\end{definition}

\begin{remark}
The reader familiar with the strategy of \cite{Davie} will not be surprised by the condition $J^y \in \cac_2^{\mu}([0,1];\cb_p)$ for some coefficient $\mu >1$. The second condition $J^y \in \cac_2^\ep([0,1];\cb_{\la,p})$ may be less expected. In fact, due to the property (\ref{well-known-2}), the fractional spaces $\cb_{\la,p}$ naturally arise from the controlled expansion of $\int_s^t S_{tu} \, dx^i_u \, f_i(y_u)$ (observe for instance (\ref{illustr})). 
\end{remark}

\begin{proposition}\label{prop:cas-regu}
Suppose that $x$ is a $m$-dimensional differentiable path, and let $\xrgh^\bt$ be its Lévy area, understood in the Lebesgue sense. We suppose that Assumptions (A1) and (F)$_1$ are both satisfied. Then, for all $\eta \in (0,1)$ and $\psi \in \cb_{\eta,p}$, the (ordinary) solution of Equation (\ref{equa-gene}) is also a solution in $\cb_{\eta,p}$ in the sense of Definition \ref{defi-solu}.
\end{proposition}

\begin{proof}
Let $y$ be the ordinary solution of (\ref{equa-gene}), with initial condition $\psi\in \cb_{\eta,p}$. Then $y\in \cac_1^0([0,1];\cb_{\eta,p})$ and since $(\delha y)_{ts}=\int_s^t S_{tu} \, dx^i_u \, f_i(y_u)$ and $f$ is bounded, one clearly has $y\in \cacha_1^1([0,1];\cb_p)$. Now, notice that owing to the identification (\ref{ope-cas-regu}), we get
$$K^y_{ts}=\int_s^t S_{tu} \, dx^i_u \, f_i(y_u)-X^{x,i}_{ts} f_i(y_s)=\int_s^t S_{tu} \, dx^i_u \, \der(f_i(y))_{us},$$
and so, due to (\ref{well-known-2}), one has
\begin{eqnarray}
\norm{K^y_{ts}}_{\cb_p} &\leq & \norm{\stackrel{.}{x}}_{\infty,[0,1]} \norm{f'}_\infty \int_s^t \norm{(\der y)_{us}}_{\cb_p} \, du \label{estim-direct-cas-regu}\\
& \leq & c_{x,f} \int_s^t \lcl \norm{(\delha y)_{us}}_{\cb_p}+\norm{a_{us}}_{\cl(\cb_{\eta,p},\cb_p)} \norm{y_s}_{\cb_{\eta,p}} \rcl \label{illustr}\\
&\leq & c_{x,f,y} \int_s^t \lcl \lln u-s\rrn+\lln u-s\rrn^\eta \rcl \, du \ \leq \ c_{x,f,y} \lln t-s \rrn^{1+\eta}.\nonumber
\end{eqnarray}
To complete the proof, observe that by resorting to the identification (\ref{ope-cas-regu}) once again, we can write $J^y_{ts}=\int_s^t S_{tu}\, dx^i_u \, M^i_{us}$, with
\bean
M^i_{us}&=& \der (f_i(y))_{us}-(\der x^j)_{us} f_i'(y_s) \cdot f_j(y_s)\\
&=& \int_0^1 dr \,  f_i'(y_s+r(\der y)_{us}) \cdot (\der y)_{us}-(\der x^j)_{us} f_i'(y_s) \cdot f_j(y_s)\\
&=& \int_0^1 dr \,  f_i'(y_s+r(\der y)_{us}) \cdot a_{us}y_s\\
& &+\int_0^1 dr \,  f_i'(y_s+r(\der y)_{us}) \cdot (\delha y)_{us}-(\der x^j)_{us} f_i'(y_s) \cdot f_j(y_s),
\eean
and thus
\begin{multline}\label{exp-m-i}
M^i_{us}=\int_0^1 dr \, f_i'(y_s+r(\der y)_{us}) \cdot a_{us} y_s+\int_0^1 dr \, f_i'(y_s+r(\der y)_{us}) \cdot K^y_{us}\\
+\int_0^1 dr \, f_i'(y_s+r(\der y)_{us}) \cdot X^{ax,j}_{us}f_j(y_s)+\int_0^1 dr \, \lc f_i'(y_s+r(\der y)_{us})-f_i'(y_s)\rc \cdot (\der x^j)_{us}f_j(y_s),
\end{multline}
where we have used the trivial relation $X^{x,i}_{us}=X^{ax,i}_{us}+(\der x^j)_{us}$. From this expression, it is easy to show that $\norm{M^i_{us}}_{\cb_p} \leq c_y \lln u-s \rrn^\eta$, which leads to (\ref{def-so-j-y}) with $\mu=1+\eta$, $\ep=1$.
\end{proof}

\begin{proposition}\label{prop:cas-brown}
Suppose that $x$ is a $m$-dimensional standard Brownian motion defined on a complete filtered probability space $(\Omega,\mathcal{F},P)$, and let $\xrgh^\bt$ be its Lévy area, understood in the Itô sense. Suppose also that Assumptions (A1) and (F)$_2$ are both satisfied. Then, for all $\eta \in (1/2,1)$ and $\psi\in \cb_{\eta,p}$, the Itô solution of Equation (\ref{equa-gene}) is almost surely a solution in $\cb_{\eta,p}$ in the sense of Definition \ref{defi-solu}.
\end{proposition}

\begin{proof}
For the sake of clarity, we have postponed the proof of this result to Appendix B.
\end{proof}

Together with the forthcoming uniqueness result contained in Theorem \ref{theo-uni}, the above-stated properties allow to identify, in the two reference situations (i.e., when $x$ is a differentiable path and when $x$ is a standard Brownian motion), the solution in the sense of Definition \ref{defi-solu} with the classical solution. We will then lean on the continuity Theorem \ref{theo-conti} to fully justify our interpretation of (\ref{equa-gene}) (see Remark \ref{rk:discuss-sol}). 

\subsection{Main results}\label{subsec:main-results}

With the tools and the definitions we have just introduced, we are in a position to state the three main results of this paper, which successively provide the existence, uniqueness and continuity of the solution to (\ref{equa-gene}).

\begin{theorem}\label{theo-exi}
Under Assumptions (A1), (X)$_\ga$ and (F)$_2$, for all $\ga'\in (1-\ga,\ga+1/2)$ and $\psi\in \cb_{\ga',p}$, Equation (\ref{equa-gene}) admits a solution $y$ in $\cb_{\ga',p}$ in the sense of Definition \ref{defi-solu}, which satisfies 
$$\cn[y;\cacha_1^\ga([0,1];\cb_p)]+\cn[y;\cac_1^0([0,1];\cb_{\ga',p})] \leq C(\norm{\xrgh}_\ga,\norm{\psi}_{\cb_{\ga',p}}),$$
for some function $C:(\R^+)^2 \to \R$ growing with its arguments.
\end{theorem}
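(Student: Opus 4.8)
The plan is to realize $y$ as the limit of a Davie-type discrete scheme and to turn the local construction into a global one by a patching procedure dictated by the non-uniform Nemytskii bounds \eqref{contr-nem-1}--\eqref{contr-nem-2}. On a subinterval $[a,b]\subset[0,1]$ and a partition $\pi=\{a=t_0<\dots<t_N=b\}$, consider the one-step map obtained by forcing $J$ to vanish,
$$\Gamma_{ts}(\vp):=S_{ts}\vp+X^{x,i}_{ts}f_i(\vp)+X^{xx,ij}_{ts}F_{ij}(\vp),$$
and define $y^\pi$ by $y^\pi_a=\psi$, $y^\pi_{t_{k+1}}=\Gamma_{t_{k+1}t_k}(y^\pi_{t_k})$, interpolating inside each cell by the same formula; thus $J^{y^\pi}_{ts}=0$ between consecutive grid points, and the candidate solution is $\lim_{|\pi|\to0}y^\pi$.

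The heart of the matter is a local a priori estimate bounding $\cn[y^\pi;\cacha_1^\ga([a,b];\cb_p)]+\cn[y^\pi;\cac_1^0([a,b];\cb_{\ga',p})]$ uniformly in $\pi$. I would first read off, from the regularity of $X$ in \eqref{regu-x-x}--\eqref{regu-x-x-x}, from the boundedness of $f,f',F$, and from \eqref{contr-nem-1}--\eqref{contr-nem-2}, the pointwise controls of $X^{x,i}_{ts}f_i(y_s)$ and $X^{xx,ij}_{ts}F_{ij}(y_s)$ in both $\cb_p$ and $\cb_{\ga',p}$. Here the upper constraint $\ga'<\ga+\tfrac12$ is crucial: it gives $\ga'-\ga<\tfrac12$, so that the smoothing of $X^{x}$ allows one to measure $f_i(y_s)$ at a level below the Banach-algebra threshold $1/2$, keeping the dominant first-order term \emph{linear} in $\norm{y_s}_{\cb_{\ga',p}}$ via \eqref{contr-nem-1}. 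Next I would compute the twisted coboundary $\delha J^{y^\pi}$ and, using the cocycle relations underlying Definition \ref{defi-chemin}, show that it lies in $\cac_3^\mu$ with $\mu=\ga+\ga'$; the surviving remainder is of $a_{ts}y_s$-type and is measured in $\cb_p$ with the help of $\norm{a_{ts}\vp}_{\cb_p}\leq c\,\abs{t-s}^{\ga'}\norm{\vp}_{\cb_{\ga',p}}$, which is exactly where the high regularity of the data enters and where the lower constraint $\ga'>1-\ga$ produces $\mu>1$. The discrete sewing algorithm of Appendix A then converts this one-step consistency into a genuine bound on $J^{y^\pi}$ itself, valid once $b-a$ is smaller than a threshold depending only on $\norm{\xrgh}_\ga$ and $\norm{y^\pi_a}_{\cb_{\ga',p}}$.

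On a short interval $[0,\tau]$ the uniform bound, together with a Cauchy estimate for the difference of two successive refinements (obtained by running the same algorithm on $y^\pi-y^{\pi'}$), yields a limit $y$ satisfying Definition \ref{defi-solu}, with $J^y\in\cac_2^\mu([0,\tau];\cb_p)$ and $J^y\in\cac_2^\ep([0,\tau];\cb_{\ga',p})$, the latter membership being a consequence of the regularising action of the semigroup on the $\ga'$-regular data. It remains to patch: one restarts the construction on consecutive intervals $[\tau_{k-1},\tau_k]$ from the value $y_{\tau_{k-1}}\in\cb_{\ga',p}$ until $[0,1]$ is covered.

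The main obstacle is precisely this patching, since the local horizon $\tau_k$ shrinks as $\norm{y_{\tau_{k-1}}}_{\cb_{\ga',p}}$ grows, and a naive iteration could accumulate before reaching $t=1$. To exclude this I would establish, through a careful tracking of the dependence on the initial condition embodied in the controls \eqref{contr-sob-j} and \eqref{contr-b-p-2}, a global-in-time bound $\sup_{t\in[0,1]}\norm{y_t}_{\cb_{\ga',p}}\leq C(\norm{\xrgh}_\ga,\norm{\psi}_{\cb_{\ga',p}})$. The decisive point is once more $\ga+\ga'>1$: it forces the quadratic contributions coming from the second-order Nemytskii estimate \eqref{contr-nem-2} to appear with a time exponent strictly larger than one, so that they can be absorbed into a linear Gronwall-type inequality rather than driving a finite-time blow-up. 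Since the contraction property $\norm{S_{ts}\vp}_{\cb_{\ga',p}}\leq\norm{\vp}_{\cb_{\ga',p}}$ prevents any growth from the free evolution, the $\cb_{\ga',p}$-norm stays bounded on all of $[0,1]$, only finitely many patches are needed, and concatenating the local solutions yields the global solution together with the announced estimate.
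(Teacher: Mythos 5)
Your overall architecture --- the Davie-type scheme forcing $J$ to vanish on the grid, the Appendix-A algorithm used as a discrete sewing lemma, the threshold $\mu>1$ coming from $\ga'>1-\ga$, and the observation that $\ga'-\ga<1/2$ lets one measure $f_i(y_s)$ in $\cb_{1/2,p}$ where (\ref{contr-nem-1}) is linear --- coincides with the paper's proof. The genuine gap is in your globalization step. You assert that the local horizon shrinks with $\norm{y^\pi_a}_{\cb_{\ga',p}}$, and you propose to repair this by a global a priori bound in which quadratic contributions from (\ref{contr-nem-2}) are ``absorbed'' into a linear Gronwall inequality because their time exponent exceeds one. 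This does not work as stated: with genuinely quadratic one-step estimates, absorption forces the window length to decay like a negative power of the running norm, while the norm may grow geometrically from one window to the next; the total time covered is then a convergent series, so the iteration can terminate before $t=1$ regardless of the exponent (this is exactly the obstruction that confines \cite{RHE} to local solutions, and a Gronwall lemma has no purchase here since the window estimates are H\"older bounds, not integral inequalities). The point you half-identify in your first paragraph but then abandon is that \emph{no quadratic term ever appears}: in the existence proof every composition $f_i(\cdot)$, $F_{ij}(\cdot)$ --- in the scheme, in (\ref{contr-norm-sob}), and in the estimates of $\delha J^n$ --- is only ever measured in $\cb_{1/2,p}$, so only the linear bound (\ref{contr-nem-1}) is invoked and (\ref{contr-nem-2}) is never used. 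Consequently every window estimate has the form $c_{x,f}\{1+\norm{y^n_{kT_0}}_{\cb_{\ga',p}}\}$ with the bracket appearing linearly on \emph{both} sides of the induction, so it cancels when one chooses $T_0$: as in Proposition \ref{prop:inter-exi}, $T_0$ depends only on $x,f,\ga,\ga',\mu,\ep$ (and one needs $\mu$ \emph{strictly} below $\inf(3\ga,\ga+\ga')$ precisely to retain positive powers $T_0^{3\ga-\mu}$, $T_0^{\ga+\ga'-\mu}$ for this choice). The horizon never shrinks, finitely many windows of fixed length cover $[0,1]$, and no Gronwall argument is needed.

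A second, smaller gap: you propose to produce the limit through a Cauchy estimate for $y^\pi-y^{\pi'}$ between successive refinements. Estimating such differences requires Lipschitz control of expressions like $f_i'(y^\pi_s+r(\der y^\pi)_{us})-f_i'(y^{\pi'}_s+r(\der y^{\pi'})_{us})$ multiplied against increments in $\cb_p$, which in this setting needs the continuous inclusion $\cb_{\ga',p}\subset\cb_\infty$ (Assumption (A2) with $p>n$) and (F)$_3$ --- exactly the extra hypotheses of Theorem \ref{theo-uni}, not of Theorem \ref{theo-exi}. The paper stays within (A1), (X)$_\ga$, (F)$_2$ by instead extracting a subsequence via Ascoli from the uniform bounds $\cn[y^n;\cac_1^0(\llbracket 0,1\rrbracket_n;\cb_{\ga',p})]+\cn[y^n;\cacha_1^\ga(\llbracket 0,1\rrbracket_n;\cb_p)]\leq c\{1+\norm{\psi}_{\cb_{\ga',p}}\}$ on the linear interpolants, and then verifying $J^y\in\cac_2^\mu([0,1];\cb_p)\cap\cac_2^\ep([0,1];\cb_{\ga',p})$ by approximating arbitrary $(s,t)$ by grid points; if you keep the Cauchy route, your result silently requires the stronger assumptions of the uniqueness theorem.
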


\begin{theorem}\label{theo-uni}
If $p>n$ and if Assumptions (A1), (A2), (X)$_\ga$ and (F)$_3$ are all satisfied, then for all $\ga'\in (1-\ga,\ga+1/2)$ and $\psi \in \cb_{\ga',p}$, the solution $y$ in $\cb_{\ga',p}$ given by Theorem \ref{theo-exi} is unique. Moreover, for any
$$0<\be < \inf\lp 3\ga-1,\ga+\ga'-1,\ga-(\ga'-1/2)\rp,$$
there exists a constant $c_{x,\psi,f,\be}$ such that for all $n$,
$$\max_{k=0,\ldots ,2^n} \norm{y_{t_k^n}-y^n_{t_k^n}}_{\cb_{\ga',p}} \leq \frac{c_{x,\psi,f,\be}}{(2^n)^\be},$$
where $y^n$ stands for the path given by the discrete scheme (\ref{schema}).
\end{theorem}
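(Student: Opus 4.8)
The plan is to compare an arbitrary solution $y$ to the discrete Euler-type scheme $y^n$ of (\ref{schema}) on the dyadic grid $t^n_k=k/2^n$, and to show that the dyadic error $z^n_k:=y_{t^n_k}-y^n_{t^n_k}$ decays at the announced rate; uniqueness will then drop out. The starting point is the one-step identity: by Definition \ref{defi-solu}, over a grid interval the exact solution obeys
$$(\delha y)_{t^n_{j+1}t^n_j}=X^{x,i}_{t^n_{j+1}t^n_j}f_i(y_{t^n_j})+X^{xx,ij}_{t^n_{j+1}t^n_j}F_{ij}(y_{t^n_j})+J^y_{t^n_{j+1}t^n_j},$$
whereas the scheme is built precisely so that the same identity holds with vanishing defect on each step. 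Subtracting and using $(\delha(y-y^n))_{t^n_{j+1}t^n_j}=z^n_{j+1}-S_{t^n_{j+1}t^n_j}z^n_j$, I would obtain a recursion $z^n_{j+1}=S_{t^n_{j+1}t^n_j}z^n_j+R_j$, whose local increment $R_j$ gathers the coefficient differences $X^{x,i}_{t^n_{j+1}t^n_j}[f_i(y_{t^n_j})-f_i(y^n_{t^n_j})]$ and $X^{xx,ij}_{t^n_{j+1}t^n_j}[F_{ij}(y_{t^n_j})-F_{ij}(y^n_{t^n_j})]$ together with the genuine defect $J^y_{t^n_{j+1}t^n_j}$.

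The ingredients I would assemble are: (i) uniform a priori bounds, which I would establish for an arbitrary solution by running the local estimates behind Theorem \ref{theo-exi} and patching, so that both $y$ and $y^n$ stay in a fixed ball of $\cb_{\ga',p}$; (ii) the fact that, since $2\ga'p>n$ (because $\ga'>1/2$ and $p>n$), Assumption (A2) makes $\cb_{\ga',p}$ a Banach algebra on which, thanks to (F)$_3$, the Nemytskii maps $f_i$ and $F_{ij}=f_i'\cdot f_j$ are locally Lipschitz, i.e. $\norm{f_i(y_{t^n_j})-f_i(y^n_{t^n_j})}_{\cb_{\ga',p}}\leq c\,\norm{z^n_j}_{\cb_{\ga',p}}$ and likewise for $F_{ij}$; (iii) the regularity of $X=(X^x,X^{ax},X^{xx})$ together with the smoothing bounds $\norm{S_{ts}\vp}_{\cb_{\al+\ka,p}}\leq c_\ka\lln t-s\rrn^{-\ka}\norm{\vp}_{\cb_{\al,p}}$ and $\norm{a_{ts}\vp}_{\cb_p}\leq c_\al\lln t-s\rrn^\al\norm{\vp}_{\cb_{\al,p}}$.

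The hard part is controlling the accumulation of these errors uniformly in $n$. A naive telescoping of the recursion produces a linear feedback term of size $c\,(2^{-n})^\ga\sum_{j\leq k}\norm{z^n_j}_{\cb_{\ga',p}}$, and a crude discrete Gronwall over all $2^n$ steps would then cost a factor $\exp(c\,(2^{-n})^{\ga-1})$ blowing up as $n\to\infty$. This is exactly where Davie's middle-point argument fails and where the dyadic patching algorithm of Appendix A enters. I would instead fix a macroscopic mesh $T_0$, independent of $n$, small enough that the Lipschitz feedback is a genuine contraction on each sub-interval of length $T_0$, and control the defect on such a sub-interval not by linear summation but by merging adjacent dyadic intervals pairwise along the tree: each merge pays the twisted second increment $\delha J^y$, of order $\mu>1$ in $\lln t-s\rrn$, so that the resulting geometric sum $\sum_k 2^{k}(2^{-k})^{\mu}$ converges. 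Patching over the finitely many ($n$-independent) sub-intervals, with the global a priori bounds keeping all constants uniform, preserves the rate. Two further mechanisms are essential here: the condition $\ga+\ga'>1$, which is what makes the first-order-in-$a_{us}$ and quadratic contributions appearing in the expansion of the defect (compare the term $f_i'(\cdot)\,a_{us}y_s$ in (\ref{exp-m-i})) of super-unit order; and the auxiliary control $J^y\in\cac_2^\ep([0,1];\cb_{\ga',p})$ from Definition \ref{defi-solu}, which handles the near-diagonal terms where the semigroup cannot be used to upgrade the $\cb_p$-defect to the strong norm. Tracking the three resulting contributions — the rough-path remainder (the $\cb_p$-defect of order $3\ga$ lifted by smoothing, giving $3\ga-1$), the semigroup/initial-data interplay through $a_{us}y_s$ (giving $\ga+\ga'-1$), and the loss incurred in upgrading to the $\cb_{\ga',p}$ norm (giving $\ga+1/2-\ga'$) — yields the rate $\be<\inf(3\ga-1,\ga+\ga'-1,\ga-(\ga'-1/2))$.

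Finally, the rate estimate gives uniqueness almost for free: the scheme $y^n$ depends only on $(\psi,X,f)$ and not on the particular solution, so if $y$ and $\tilde y$ are two solutions, both are the $\cb_{\ga',p}$-limit of the same sequence $(y^n_{t^n_k})$ at every dyadic time; hence $y_t=\tilde y_t$ on the dyadic grid, and the continuity of both processes (they lie in $\cacha_1^\ga([0,1];\cb_p)\cap\cac_1^0([0,1];\cb_{\ga',p})$) extends the equality to all $t\in[0,1]$. Equivalently, one may run the same dyadic comparison directly on $y-\tilde y$, whose defect is now $J^y-J^{\tilde y}$ together with the Lipschitz coefficient differences, concluding $y\equiv\tilde y$ first on a short interval where the merge constant is a contraction and then by patching with the global a priori bounds of Theorem \ref{theo-exi}. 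I expect the reorganisation of the finest-scale defect through the Appendix A algorithm — rather than the routine Nemytskii and smoothing estimates — to be the genuine obstacle.
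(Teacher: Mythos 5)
Your overall architecture is the paper's: compare an arbitrary solution with the scheme (\ref{schema}), obtain a contraction on sub-intervals of $n$-independent length, invoke the algorithm of Appendix A, use the second condition of Definition \ref{defi-solu} for the near-diagonal terms, patch with the a priori bounds, and deduce uniqueness from the fact that two solutions are limits of the same scheme at dyadic times. However, the central mechanism of error propagation is assembled backwards, and as written the argument does not close.

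Two concrete points. First, you propose to control the accumulated defect ``not by linear summation but by merging adjacent dyadic intervals pairwise along the tree: each merge pays $\delha J^y$''. This is the wrong choice: merging the pure defect of $y$ only reconstructs $J^y_{ts}$ up to the algorithm's corrections (in the paper's notation, the telescoped sum of the one-step defects equals $J^y_{ts}-J^{y,\Pi^n}_{ts}=R^{y,\Pi^n}_{ts}$), and both $J^y_{ts}$ and $J^{y,\Pi^n}_{ts}$ are bounded by constants depending on $y$ alone times powers of $\lln t-s \rrn$ --- an $n$-independent bound. You would then conclude only that $\norm{y_{t^n_k}-y^n_{t^n_k}}$ stays bounded, never that it tends to $0$, so neither the rate nor uniqueness follows. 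The accumulated defect must instead be bounded by direct weighted summation, which works precisely because $\mu>1$: about $(t-s)2^n$ terms of size $(2^{-n})^\mu$, giving $(2^n)^{-(\mu-1)}$, plus the near-diagonal term of size $(2^n)^{-\ep}$ in $\cb_{\ga',p}$; this is the remainder $R^{y,\Pi^n}$ and estimate (\ref{utilite-sec-cond}) in Lemma \ref{lem:base-uni} --- exactly the opposite of your choice. Second, the object to which the Appendix A algorithm must be applied is the difference $J^{y,\Pi^n}_{ts}-J^{y^n,\Pi^n}_{ts}$, each removal paying $\delha(J^y-J^{y^n})$, not $\delha J^y$. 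This is the crux: by Lemma \ref{lem:uni-2}, $\delha(J^y-J^{y^n})$ is bounded by $\cn[y-y^n;\cq(\llbracket s,t\rrbracket_n)]$ times a positive power of $\lln t-s \rrn$, and it is precisely this proportionality to the error that turns the feedback into a contraction once $T_1^\ga$ is small; a term proportional to $\delha J^y$ alone feeds nothing back. Closing this loop also forces the comparison metric to contain $\cn[K^y-K^{y^n};\cac_2^{2\ga}(\cb_p)]$, since the decomposition (\ref{dec-delha-j-n}) of $\delha J$ involves $K_{us}$, whereas your recursion tracks only $z^n_k$. Relatedly, the ``Lipschitz feedback at macroscopic scale'' you invoke is only available because the identity $\delha(y-y^n)_{ts}=X^{x,i}_{ts}\lc f_i(y_s)-f_i(y^n_s)\rc+X^{xx,ij}_{ts}\lc F_{ij}(y_s)-F_{ij}(y^n_s)\rc+\lc J^y_{ts}-J^{y^n}_{ts}\rc$ holds for \emph{every} grid pair $(s,t)$, so the coefficient differences enter once per macroscopic increment; your one-step recursion plus telescoping keeps them at the fine scale, which is exactly the divergence you point out yourself.
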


\begin{theorem}\label{theo-conti}
Under the assumptions of Theorem \ref{theo-uni}, the solution of (\ref{equa-gene}) is continuous with respect to the initial condition and the driving rough path. More precisely, if $y$ (resp. $\tilde{y}$) is the solution in $\cb_{\ga',p}$ associated to $(x,\xrgh^\bt)$ (resp. $(\tilde{x},\tilde{\xrgh}^\bt)$), with initial condition $\psi$ (resp. $\tilde{\psi}$), then
\begin{multline}\label{ito}
\cn[y-\yti;\cacha_1^\ga([0,1];\cb_p)]+\cn[y-\yti;\cac_1^0([0,1];\cb_{\ga',p})] \\
\leq C\lp \norm{\xrgh}_\ga,\norm{\tilde{\xrgh}}_\ga,\norm{\psi}_{\cb_{\ga',p}},\norm{\tilde{\psi}}_{\cb_{\ga',p}} \rp \lcl \norm{\psi-\tilde{\psi}}_{\cb_{\ga',p}}+\norm{\xrgh-\tilde{\xrgh}}_\ga \rcl,
\end{multline}
for some functions $C:(\R^+)^4 \to \R^+$ growing with its arguments.
\end{theorem}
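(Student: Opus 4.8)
The plan is to reduce the statement to a stability estimate for the difference process $z:=y-\yti$, obtained by transposing to $z$ the very a priori machinery that produced Theorems \ref{theo-exi} and \ref{theo-uni}, and by feeding in the continuity of the infinite-dimensional rough path $X \mapsto \tilde{X}$ recorded in Subsection \ref{subsec:infinite-rough-path}. As a preliminary step I would invoke Theorem \ref{theo-exi} for each of the two solutions separately, so that $\cn[y;\cacha_1^\ga([0,1];\cb_p)]+\cn[y;\cac_1^0([0,1];\cb_{\ga',p})]$ and the analogous quantity for $\yti$ are bounded by $C(\norm{\xrgh}_\ga,\norm{\psi}_{\cb_{\ga',p}})$ and $C(\norm{\tilde{\xrgh}}_\ga,\norm{\psiti}_{\cb_{\ga',p}})$ respectively. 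These uniform bounds freeze all the solution-dependent constants appearing below, and are what ultimately produces the prefactor $C$ of the four norms in (\ref{ito}).

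Next I would write out $(\delha z)_{ts}$ via the solution property of Definition \ref{defi-solu}. Since $(\delha y)_{ts}=X^{x,i}_{ts}f_i(y_s)+X^{xx,ij}_{ts}F_{ij}(y_s)+J^y_{ts}$, and similarly for $\yti$ with the tilde data, subtracting and splitting each product as
\[
X^{x,i}_{ts}f_i(y_s)-\tilde{X}^{x,i}_{ts}f_i(\yti_s)=(X^{x,i}_{ts}-\tilde{X}^{x,i}_{ts})f_i(y_s)+\tilde{X}^{x,i}_{ts}\lp f_i(y_s)-f_i(\yti_s)\rp,
\]
and likewise for the $X^{xx,ij}F_{ij}$ term, isolates two families of contributions: those carrying a factor $X-\tilde{X}$, controlled by $\norm{X-\tilde{X}}_{\ga,\al,\ka}\leq c\lcl 1+\norm{\xrgh}_\ga+\norm{\tilde{\xrgh}}_\ga\rcl\norm{\xrgh-\tilde{\xrgh}}_\ga$ together with the (uniformly bounded) Nemytskii norms of $f_i(y_s)$ and $F_{ij}(y_s)$; and those carrying a factor $f_i(y_s)-f_i(\yti_s)$ or $F_{ij}(y_s)-F_{ij}(\yti_s)$, which by the mean value theorem are linear in $z_s$ up to the non-uniform Sobolev estimates used throughout. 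The remaining term is the difference of nonlinear remainders $J^y_{ts}-J^{\yti}_{ts}$, to be controlled exactly as in the proofs of Theorems \ref{theo-exi} and \ref{theo-uni}.

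Concretely, the algorithm of Appendix A, applied on a subinterval $[a,b]$ whose length is chosen small relative to the frozen norms so that the ambient semi-norms are sub-unit, turns the relation above into a local contraction-type inequality
\[
\cn[z;\cacha_1^\ga([a,b];\cb_p)]+\cn[z;\cac_1^0([a,b];\cb_{\ga',p})]\leq \tfrac{1}{2}\lc \cdots \rc +D\lcl \norm{z_a}_{\cb_{\ga',p}}+\norm{\psi-\psiti}_{\cb_{\ga',p}}+\norm{\xrgh-\tilde{\xrgh}}_\ga\rcl,
\]
where the bracketed term $[\cdots]$ reproduces the left-hand side and is absorbed, and $D$ depends only on the frozen norms. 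Patching these local estimates across a partition of $[0,1]$ whose mesh and cardinality are fixed by the preliminary a priori bounds, and propagating the endpoint contributions $\norm{z_a}_{\cb_{\ga',p}}$ by the resulting geometric (discrete Gronwall) accumulation, yields the global bound (\ref{ito}) with an increasing function $C$ of the four norms.

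The main obstacle lies in the treatment of the quadratic term $X^{xx,ij}_{ts}F_{ij}(y_s)$ and of its counterpart inside $J^y-J^{\yti}$: because the Nemytskii map $\vp\mapsto F_{ij}(\vp)$ only obeys the non-uniform estimate (\ref{contr-nem-2}) rather than a genuine global Lipschitz bound, a naive estimate generates terms quadratic in the $\cb_{\ga',p}$-norms of $y$ and $\yti$ that cannot be closed on all of $[0,1]$. As stressed in the introduction, it is the high regularity $\ga+\ga'>1$ that allows these contributions to be reabsorbed into the $\cb_p$-scale against a small power of the interval length, keeping the difference estimate linear in $z$; carrying this through while tracking the exact dependence of every constant on $\norm{\psi}_{\cb_{\ga',p}}$ and $\norm{\psiti}_{\cb_{\ga',p}}$ is the delicate bookkeeping of the argument. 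A fully equivalent route, more in the spirit of the discrete construction, would instead compare the two Euler schemes $y^n$ and $\tilde{y}^n$ directly, bound $\norm{y^n_{t_k^n}-\tilde{y}^n_{t_k^n}}_{\cb_{\ga',p}}$ uniformly in $n$ by the data differences, and then pass to the limit using the convergence rate of Theorem \ref{theo-uni}.
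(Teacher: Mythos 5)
Your proposal is correct in substance, but your primary route is not the one the paper takes --- the paper never compares the two solutions $y$ and $\yti$ directly. Instead it compares the two discrete schemes $y^n$ and $\yti^n$, which is exactly the ``fully equivalent route'' you relegate to your final sentence. Concretely, the paper introduces $\Jti^n$, $\Kti^n$ and the quantity $\cn[y^n-\yti^n;\tilde{Q}(\llbracket s,t\rrbracket_n)]$, proves two lemmas mirroring Lemmas \ref{lemme:base} and \ref{lem:uni-2} (an Appendix A reconstruction of $J^n-\Jti^n$ from $\delha(J^n-\Jti^n)$, and a bound of the latter by $\cn[y^n-\yti^n;\tilde{Q}]+\norm{\xrgh-\tilde{\xrgh}}_\ga$), runs the local contraction by choosing $T_2$ with $c^1_{x,\xti,\psi,\tilde{\psi}}T_2^\ga=\frac12$, patches over $[kT_2,(k+1)T_2]$ to get the scheme-level estimate (\ref{ito-discret}) \emph{uniformly in} $n$, and only then transfers this to $y-\yti$ by a triangle inequality through $y-y^n$ and $\yti-\yti^n$, invoking the convergence rate (\ref{contr-uni-norm}) of Theorem \ref{theo-uni}. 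What the paper's route buys is that $J^n$ and $\Jti^n$ vanish identically on the partition intervals, so the Appendix A reconstruction carries no residue terms; your direct comparison of $y$ and $\yti$ instead produces residues of the type $R^{y,\Pi^n}$, $R^{\yti,\Pi^n}$ appearing in Lemma \ref{lem:base-uni}, which do vanish as $n\to\infty$ since $\mu>1$, so your route closes as well --- it amounts to re-running the uniqueness argument of Section \ref{sec:uni} with $y^n$ replaced by $\yti$ and with the extra $(X-\tilde{X})$-terms from the splitting you write, at the price of a limiting step inside every estimate rather than one single limiting step at the very end. Two small points of bookkeeping: the a priori bounds you ``freeze'' for $y$ and $\yti$ require the identification of the given solutions with the ones constructed by the scheme, hence Theorem \ref{theo-uni} and not Theorem \ref{theo-exi} alone (this is harmless here since Theorem \ref{theo-conti} assumes the hypotheses of Theorem \ref{theo-uni}); and the quantity to contract must also include $\cn[K^y-K^{\yti};\cac_2^{2\ga}(\cb_p)]$, as the decomposition (\ref{dec-delha-j-n}) of $\delha J$ feeds on $K$ --- the paper builds this into $\tilde{Q}$, and your plan should state it explicitly.
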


\smallskip

Together with the identification result established in Proposition \ref{prop:cas-regu}, these three theorems offer another perspective on the solution of Equation (\ref{equa-gene}), which may be more in accordance with the formalism used in \cite{FVbook} for rough standard systems:

\begin{corollary}\label{coro:sol-rough-path}
Under the assumptions of Theorem \ref{theo-uni}, suppose that $\psi \in \cb_{\ga',p}$ and let $(\xti^n)_n$ be a sequence of differentiable paths such that $\norm{x-\xti^n}_\ga+\norm{\xrgh^\bt-\tilde{\xrgh}^{\bt,n}}_{2\ga} \to 0$ as $n$ tends to infinity, where $\tilde{\xrgh}^{\bt,n}$ stands for the standard Lévy area constructed from $\xti^n$. Let $\yti^n$ be the (ordinary) solution of (\ref{equa-gene}) associated to each $\xti^n$. If $y$ is the solution of (\ref{equa-gene}) given by Theorem \ref{theo-uni}, then
\begin{equation}\label{lip-ito}
\cn[y-\yti^n;\cacha_1^\ga([0,1];\cb_p)]+\cn[y-\yti^n;\cac_1^0([0,1];\cb_{\ga',p})] \to 0
\end{equation}
as $n$ tends to infinity.
\end{corollary}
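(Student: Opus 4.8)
The plan is to deduce Corollary \ref{coro:sol-rough-path} directly from the continuity Theorem \ref{theo-conti} together with the identification Proposition \ref{prop:cas-regu}, so that essentially no new analytic work is required. First I would fix the initial condition: since the corollary keeps $\psi$ fixed (there is no sequence $\tilde\psi^n$), I apply Theorem \ref{theo-conti} with $\tilde\psi=\psi$ and with the driving rough path $(\xti^n,\tilde{\xrgh}^{\bt,n})$ in place of $(\tilde x,\tilde{\xrgh}^\bt)$. The point is that the solution $\yti^n$ appearing in the corollary is \emph{a priori} the classical solution associated to the regular path $\xti^n$, whereas Theorem \ref{theo-conti} compares solutions \emph{in the sense of Definition \ref{defi-solu}}. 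The bridge is Proposition \ref{prop:cas-regu}, which asserts that for a regular driver the classical solution \emph{is} a solution in the sense of Definition \ref{defi-solu}, and then the uniqueness part of Theorem \ref{theo-uni} guarantees these two objects coincide. Hence $\yti^n$ is exactly the Definition \ref{defi-solu}-solution associated to $\xti^n$ with initial condition $\psi$, which is precisely the object controlled by Theorem \ref{theo-conti}.

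With this identification in hand, Theorem \ref{theo-conti} gives the estimate
\[
\cn[y-\yti^n;\cacha_1^\ga([0,1];\cb_p)]+\cn[y-\yti^n;\cac_1^0([0,1];\cb_{\ga',p})]
\leq C\lp \norm{\xrgh}_\ga,\norm{\tilde{\xrgh}^n}_\ga,\norm{\psi}_{\cb_{\ga',p}},\norm{\psi}_{\cb_{\ga',p}} \rp \norm{\xrgh-\tilde{\xrgh}^n}_\ga,
\]
where I have used $\psi-\tilde\psi=0$ to kill the first term in the bracket. The convergence hypothesis of the corollary states precisely that $\norm{x-\xti^n}_\ga+\norm{\xrgh^\bt-\tilde{\xrgh}^{\bt,n}}_{2\ga}\to 0$, which is exactly $\norm{\xrgh-\tilde{\xrgh}^n}_\ga\to 0$ by the definition of the norm $\norm{\cdot}_\ga$ given in Assumption (X)$_\ga$. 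So the right-hand factor tends to $0$.

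The only genuinely necessary observation — and the step I would be most careful about — is that the prefactor $C$ stays bounded along the sequence. Since $C$ is stated to grow with its arguments, it suffices to bound its arguments uniformly in $n$. The arguments $\norm{\psi}_{\cb_{\ga',p}}$ are constant, so the issue reduces to bounding $\norm{\tilde{\xrgh}^n}_\ga$ uniformly. But convergence $\norm{\xrgh-\tilde{\xrgh}^n}_\ga\to 0$ forces $\sup_n\norm{\tilde{\xrgh}^n}_\ga<\infty$, since a convergent sequence in a normed quantity is bounded and $\norm{\tilde{\xrgh}^n}_\ga\leq \norm{\xrgh}_\ga+\norm{\xrgh-\tilde{\xrgh}^n}_\ga$. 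Setting $M:=\sup_n\norm{\tilde{\xrgh}^n}_\ga<\infty$ and using monotonicity of $C$ in each argument, we get a bound with prefactor $C\lp \norm{\xrgh}_\ga,M,\norm{\psi}_{\cb_{\ga',p}},\norm{\psi}_{\cb_{\ga',p}} \rp$ independent of $n$, and letting $n\to\infty$ yields (\ref{lip-ito}).

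In summary, the proof is the chain: classical solution $\overset{\text{Prop.~\ref{prop:cas-regu}}}{=}$ Definition \ref{defi-solu}-solution for each $\xti^n$, then apply the continuity estimate of Theorem \ref{theo-conti} with $\tilde\psi=\psi$, then use the uniform boundedness of $\norm{\tilde{\xrgh}^n}_\ga$ (automatic from convergence) to control the growing prefactor, and finally pass to the limit. I do not anticipate any hard estimate here; the content of the corollary is entirely that the abstract continuity result, once combined with the regular-case identification, reduces to the continuity-in-the-rough-path statement with the initial data held fixed.
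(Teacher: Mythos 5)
Your proposal is correct and is essentially the paper's own (implicit) argument: the paper presents the corollary as a direct consequence of the identification in Proposition \ref{prop:cas-regu} combined with the uniqueness of Theorem \ref{theo-uni} and the continuity estimate of Theorem \ref{theo-conti}, exactly the chain you describe. Your added care about the prefactor $C$ — bounding $\norm{\tilde{\xrgh}^n}_\ga$ uniformly via the triangle inequality — is the right (and only) point needing verification, and it is handled correctly.
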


\

\begin{remark}\label{rk:discuss-sol}
Through the latter result, one can see that the exhibited solution $y$ is a solution \emph{in the rough paths sense}, that is to say a limit of ordinary solutions with respect to some particular topology (compare with \cite[Definition 10.17]{FVbook}). In this context, $y$ can legitimately be called a mild solution of (\ref{eq-intro}), as a limit of classical mild solutions. Furthermore, it is worth noticing that given the regularity assumptions on $f_i$, if we suppose in addition that the initial condition $\psi$ belongs to the domain $\cd(A_p)$, then each (ordinary) mild solution $\yti^n$ is also a strong solution (see \cite[Theorem 6.1.6]{pazy}). Consequently, if $\psi \in \cd(A_p)$, $y$ can also be considered as a strong solution of (\ref{eq-intro}), keeping in mind the topology of the underlying convergence result (\ref{lip-ito}).
\end{remark}

\subsection{Extension to rougher paths}\label{subsec:rougher-path}

Before we turn to the proof of Theorems \ref{theo-exi}-\ref{theo-conti}, let us say a few words about the possibility of extending these results to a rougher path $x$, or otherwise stated when the Hölder coefficient $\ga$ is smaller than $1/3$.

\smallskip

Remember that for standard finite-dimensional rough systems, the results obtained by Davie in \cite{Davie} have been generalized to any $\ga \in (0,1)$ by Friz and Victoir (\cite{FV4}): essentially, the system (\ref{eq-std}) can be interpreted and solved provided that (i) the vector field $\si_{ij}$ is smooth enough and (ii) one is able to construct the iterated integrals of $x$ up to the $k$-th order, where $\frac{1}{k+1} < \ga \leq \frac{1}{k}$.

\smallskip

As far as Equation (\ref{equa-gene}) is concerned, let us first consider the next step of the procedure, which corresponds to $\frac{1}{4} < \ga \leq \frac{1}{3}$. For more simplicity, we assume that $f_i$ is infinitely differentiable with bounded derivatives. Suppose for the moment that $x$ is a differentiable path, and let us introduce, on top of $(X^x,X^{ax},X^{xx})$, the two additional operator-valued paths
$$X^{axx}_{ts}=\int_s^t a_{tu} \, dx^i_u \, (\der x^j)_{us} \quad , \quad X^{xxx,ijk}_{ts}=\int_s^t S_{tu} \, dx^i_u \, \xrgh^{\bt,jk}_{us}.$$
Let us also define $F^1_i(\vp)=f_i(\vp)$, $F^2_{ij}(\vp)=f_i'(\vp) \cdot f_j(\vp)$, $F^3_{ijk}(\vp)=f_i''(\vp) \cdot f_j(\vp) \cdot f_k(\vp)+f_i'(\vp) \cdot f_j'(\vp) \cdot f_k(\vp)$, and the three intermediate quantities
$$L^y_{ts}=(\delha y)_{ts}-X^{x,i}_{ts} F^1_i(y_s) \quad , \quad K^y_{ts}=(\delha y)_{ts}-X^{x,i}_{ts} F^1_i(y_s)-X^{xx,ij}_{ts} F^2_{ij}(y_s),$$
$$J^y_{ts}=(\delha y)_{ts}-X^{x,i}_{ts} F^1_i(y_s)-X^{xx,ij}_{ts} F^2_{ij}(y_s)-X^{xxx,ijk}_{ts} F^3_{ijk}(y_s).$$
Once endowed with this notation, a Taylor expansion of the (ordinary) equation (\ref{equa-gene}), similar to (\ref{exp-m-i}), shows that for all $s<t\in [0,1]$, one has 
\begin{equation}\label{decompo-1/4}
(\delha y)_{ts}=X^{x,i}_{ts} F^1_i(y_s)+X^{xx,ij}_{ts} F^2_{ij}(y_s)+X^{xxx,ijk}_{ts} F^3_{ijk}(y_s)+\int_s^t S_{tu} \, dx^i_u \, y^{\sharp,i}_{us},
\end{equation}
where the 'residual' path $y^\sharp$ can be decomposed as $y^{\sharp,i}=y^{\sharp,i,1}+y^{\sharp,i,2}$, with
\begin{eqnarray}\label{y-sharp-1}
y^{\sharp,i,1}_{us} &=& \int_0^1 dr \, f_i'(y_s+r (\der y)_{us}) \cdot \lc a_{us}y_s+X^{ax,j}_{us} F^1_j(y_s)+X^{axx,jk}_{us} F^2_{jk}(y_s) \rc \nonumber\\
& & +\int_0^1 dr \int_0^1 dr' \, r f_i''(y_s+r(\der y)_{us}) \cdot (\der x^j)_{us} f_j(y_s) \cdot X^{ax,k}_{us}F^1_k(y_s),
\end{eqnarray}
\begin{eqnarray}\label{y-sharp-2}
\lefteqn{y^{\sharp,i,2}_{us}=}\nonumber\\
& & \int_0^1 dr \, f_i'(y_s+r(\der y)_{us}) \cdot K^y_{us} \nonumber\\
& &+\int_0^1 dr \lc f_i'(y_s+r(\der y)_{us})-f_i'(y_s) \rc \cdot \xrgh^{\bt,jk}_{us}\cdot f_i'(y_s)\cdot f_j'(y_s)\cdot f_k(y_s) \nonumber\\
& &+\int_0^1 dr \int_0^1 dr' \, r f_i''(y_s+r(\der y)_{us}) \cdot L^y_{us} \cdot (\der x^j)_{us} \cdot f_j(y_s) \nonumber\\
& &+\int_0^1 dr \int_0^1 dr' \, r \lc f_i''(y_s+r(\der y)_{us})-f_i''(y_s) \rc \cdot (\der x^j)_{us} (\der x^k)_{us} \cdot f_j(y_s) \cdot f_k(y_s).
\end{eqnarray}
By looking closely at these expressions, it is not difficult to realize that the arguments displayed in the forthcoming sections \ref{sec:existence}-\ref{sec:conti} can be adapted to the decomposition (\ref{decompo-1/4}) so as to handle the case where $\ga \in (\frac{1}{4},\frac{1}{3}]$ (compare for instance (\ref{y-sharp-1})-(\ref{y-sharp-2}) with (\ref{compare-1})-(\ref{compare-4})). This supposes that the intermediate paths $J^y,K^y,L^y$ should be controlled with the respective topologies $\cac_2^{4\ga}(\cb_p) \cap \cac_2^\ep(\cb_{\ga',p}), \cac_2^{3\ga}(\cb_p),\cac_2^{2\ga}(\cb_p)$, and that the space parameter $\ga'$ should be picked in the (non-empty) interval $(1-\ga,\ga+1/2)$, as in Theorems \ref{theo-exi}-\ref{theo-conti}. This also supposes, in order to extend the path $X^{xxx}$, that $x$ allows the construction of a $3$-rough path $\xrgh=(x,\xrgh^\bt=\iint dxdx,\xrgh^{\textbf{3}}=\iiint dx dx dx) \in \cac_1^\ga \times \cac_2^{2\ga} \times \cac_2^{3\ga}$. We know that this assumption covers in particular the case of a fractional Brownian motion with Hurst index $H>1/4$, see \cite{CQ}.

\smallskip

The situation gets more complicated as soon as $\ga < 1/4$, since we can no longer pick $\ga'$ in the (now empty) interval $(1-\ga,\ga+1/2)$, and this assumption played a fundamental role in our estimates. Indeed, on the one hand, the condition $\ga'>1-\ga$ ensures that the order of the terms derived from (\ref{y-sharp-1}) or (\ref{compare-2}) is greater than $\ga+\ga'>1$, or otherwise stated that these paths can be considered as residual terms. On the other hand, the condition $\ga'< \ga +1/2$ is used in some estimates like (\ref{line-cont}) to go from $\cb_{\ga',p}$ to $\cb_{1/2,p}$ and thus take profit of the linear control (\ref{contr-nem-1}) (instead of the quadratic control (\ref{contr-nem-2})). Therefore, when $\ga < 1/4$, some sharpness is to be lost in our estimates and the method under consideration in this paper would only provide us with a local solution, on a time interval linked to $x$, $f$ and $\psi$. To overcome this difficulty, it may be useful to modify the path $(X^x,X^{ax},X^{xx},X^{axx},X^{xxx},\ldots)$ into a more appropriate trajectory, which would for instance includes mixed operators such as
\begin{equation}\label{mixed-operator}
X^{xa,i}_{ts}(\vp_1,\vp_2)=\int_s^t S_{tu} \, dx^i_u \lc a_{us} \vp_1 \cdot \vp_2 \rc \quad , \quad \vp_1,\vp_2\in \cb.
\end{equation}
Observe however that the extension of (\ref{mixed-operator}) to a generic $\ga$-Hölder path $x$ (with $\ga<1$) can no longer be done via an integration by parts argument (as in Lemma \ref{lem:cas-regu}), which considerably increases the difficulty of the study.

\section{Existence of a solution}\label{sec:existence}

This section is devoted to the proof of Theorem \ref{theo-exi}. Thus, we henceforth suppose that the assumptions of the theorem, namely (A1), (X)$_\ga$ and (F)$_2$, are all satisfied. Besides, we fix a parameter $\ga'\in (1-\ga,\ga+1/2)$ and an initial condition $\psi \in \cb_{\ga',p}$.

\subsection{Additional notation}\label{subsec:notations}

We consider the sequence $(\Pi^n)_n$ of dyadic partitions of $[0,1]$ (i.e., $t_k^n=\frac{k}{2^n}$) and we define the discrete path $y^n$ following the iteration formula: 
\begin{equation}\label{schema}
y^n_0:=\psi \quad , \quad y^n_{t_{k+1}^n}:=S_{t_{k+1}^nt_k^n}y^n_{t_k^n}+X^{x,i}_{t_{k+1}^n t_k^n} f_i(y^n_{t_k^n})+X^{xx,ij}_{t_{k+1}^nt_k^n} F_{ij}(y^n_{t_k^n}) \quad , \quad t_k^n \in \Pi^n.
\end{equation}
The path $y^n$ is then extended on $[0,1]$ by linear interpolation. For the sake of clarity, we will denote in this section $J^n:=J^{y^n}$ and $K^n:=K^{y^n}$. Observe that owing to the very definition of $y^n$, one has $J^n_{t_{k+1}^n t_k^n}=0$.

\smallskip

\noindent
In the rest of the paper, we will also appeal to the discrete versions of the generalized Hölder norms introduced in Subsection \ref{subsec:incre}. Thus, for any $n\in \N$, we denote $\llbracket a,b \rrbracket_n:=[a,b] \cap \Pi^n$ and 
$$\cn[h;\cacha_1^\la(\llbracket t_p^n,t_q^n \rrbracket_n,\cb_{\al,p})] :=\sup_{\substack{t_p^n \leq s <t\leq t_q^n\\ s,t\in \Pi^n}} \frac{\norm{(\delha h)_{ts}}_{\cb_{\al,p}}}{\lln t-s\rrn^\la},$$
We define the two quantities $\cn[.;\cac_2^\la(\llbracket a,b \rrbracket_n;\cb_{\al,p})]$ and $\cn[.;\cac_3^\la(\llbracket a,b \rrbracket_n;\cb_{\al,p})]$ along the same lines.

\smallskip

\noindent
Now, for any (not necessarily uniform) partition $\tilde{\Pi}$ of $[0,1]$ made of points of $\Pi^n$, we define the path $J^{n,\tilde{\Pi}}$ for all $s<t\in \Pi^n$ as
$$J^{n,\tilde{\Pi}}_{ts}:=\begin{cases}
0 & \text{if} \ (s,t) \cap \tilde{\Pi}=\emptyset\\
(\delha J^n)_{t us} & \text{if} \ (s,t) \cap \tilde{\Pi} =u\\
J^n_{ts}-J^n_{t\tilde{t}_N}-\sum_{k=1}^{N-1}S_{t\tilde{t}_{k+1}} J^n_{\tilde{t}_{k+1}\tilde{t}_k}-S_{t\tilde{t}_1}J^n_{\tilde{t}_1 s} & \text{if} \ (s,t) \cap \tilde{\Pi}=\{\tilde{t}_1,...,\tilde{t}_N \}
\end{cases}$$

\smallskip

\begin{remark}\label{rk:partitions}
Since $J^n_{t_{k+1}^n t_k^n}=0$, one has in particular $J^{n,\Pi^n}_{ts}=J^n_{ts}$. Besides, if $\tilde{\Pi},\hat{\Pi}$ are two partitions of $[0,1]$ made of points of $\Pi^n$ and such that $\tilde{\Pi} \cap (s,t)=\{\tilde{t}_1,\ldots,\tilde{t}_N \}$ ($N\geq 3$) and $\hat{\Pi}\cap (s,t)=\{\tilde{t}_1,\ldots,\tilde{t}_{k-1},\tilde{t}_{k+1},\ldots \tilde{t}_N \}$ for $1\leq k\leq N-1$, then $J^{n,\tilde{\Pi}}_{ts}-J^{n,\hat{\Pi}}_{ts}=S_{t\tilde{t}_{k+1}} (\delha J^n)_{\tilde{t}_{k+1}\tilde{t}_k\tilde{t}_{k-1}}$. With the same notation, if $\hat{\Pi}\cap (s,t)=\{\tilde{t}_1,\ldots,\tilde{t}_{N-1}\}$, then $J^{n,\tilde{\Pi}}_{ts}-J^{n,\hat{\Pi}}_{ts}=(\delha J^n)_{t\tilde{t}_N\tilde{t}_{N-1}}$.
\end{remark}

\

\subsection{Preliminary results on $J^n$}

We fix $t_p^n < t_q^n \in \Pi^n$ and we apply the algorithm described in Appendix A to the uniform partition $\{t_p^n,t_{p+1}^n,\ldots,t_q^n\}$. Set $N:=q-p$, and so, for any $k\in \{ p,\ldots,q\}$, $t_k^n=t_p^n+\frac{(k-p)(t_q^n-t_p^n)}{N}$. We also denote by $(\Pi^{n,m})_{m\in \{1,\ldots,N-1\}}$ the (decreasing) sequence of partitions of $[t_p^n,t_q^n]$ deduced from the algorithm, and $\Pi^{n,0}:=\{t_p^n,t_{p+1}^n,\ldots,t_q^n\}$. Finally, set $J^{n,m}_{t_q^nt_p^n}:=J^{n,\Pi^{n,m}}_{t_q^n t_p^n}$. With this notation in hand, one has
$$J^n_{t_q^nt_p^n}=\sum_{m=0}^{N-1} \lcl J^{n,m}_{t_q^n t_p^n}-J^{n,m+1}_{t_q^n t_p^n}\rcl.$$
Once endowed with this decomposition, we can show the following result, which turns out to be the starting point of our reasoning:

\begin{lemma}\label{lemme:base}
For all $\mu >1$ and $\ka>0$, there exists a constant $c=c_{\mu,\ka}$ such that
\begin{multline}\label{contr-1}
\norm{J^n_{t_q^n t_p^n}}_{\cb_{\ga',p}} \leq c \lcl \lln t_q^n-t_p^n \rrn^\ka+\lln t_q^n-t_p^n \rrn^{\mu-\ga'}\rcl \\
\lcl \cn[\delha J^n;\cac_3^\ka(\llbracket t_p^n,t_q^n\rrbracket_n;\cb_{\ga',p})]+\cn[\delha J^n;\cac_3^\mu(\llbracket t_p^n,t_q^n\rrbracket_n;\cb_p)] \rcl,
\end{multline}
and
\begin{equation}\label{contr-2}
\norm{J^n_{t_q^nt_p^n}}_{\cb_p} \leq c \lln t_q^n-t_p^n\rrn^\mu \cn[\delha J^n;\cac_3^\mu(\llbracket t_p^n,t_q^n\rrbracket_n;\cb_p)].
\end{equation}
 
\end{lemma}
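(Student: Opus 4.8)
The plan is to feed the telescoping decomposition $J^n_{t_q^nt_p^n}=\sum_{m=0}^{N-1}[J^{n,m}_{t_q^nt_p^n}-J^{n,m+1}_{t_q^nt_p^n}]$ recorded above into the elementary identities of Remark \ref{rk:partitions}. Since passing from $\Pi^{n,m}$ to $\Pi^{n,m+1}$ removes exactly one point, each summand equals, by Remark \ref{rk:partitions}, either $S_{t_q^n\tilde t_{k+1}}(\delha J^n)_{\tilde t_{k+1}\tilde t_k\tilde t_{k-1}}$ when an interior point is discarded, or $(\delha J^n)_{t_q^n\tilde t_N\tilde t_{N-1}}$ when the rightmost interior point is discarded. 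In both cases the summand is a (possibly convolved) third-order increment of $\delha J^n$ over a subinterval whose length $\ell_m:=\lln\tilde t_{k+1}-\tilde t_{k-1}\rrn$ and whose position relative to $t_q^n$ are quantitatively prescribed by the algorithm of Appendix A. The whole proof then reduces to estimating these increments in the scales $\cb_p$ and $\cb_{\ga',p}$ and summing over $m$.

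For the individual increments I would only invoke the contraction and smoothing bounds of the semigroup, namely $\norm{S_{ts}\vp}_{\cb_{\al,p}}\leq\norm{\vp}_{\cb_{\al,p}}$ and $\norm{S_{ts}\vp}_{\cb_{\ga',p}}\leq c\lln t-s\rrn^{-\ga'}\norm{\vp}_{\cb_p}$, already used in Subsection \ref{subsec:infinite-rough-path} (see \cite{pazy}). The $\cb_p$-estimate (\ref{contr-2}) is then immediate: the contraction bound gives $\norm{J^{n,m}_{t_q^nt_p^n}-J^{n,m+1}_{t_q^nt_p^n}}_{\cb_p}\leq c\,\ell_m^{\mu}\,\cn[\delha J^n;\cac_3^\mu(\llbracket t_p^n,t_q^n\rrbracket_n;\cb_p)]$, and since $\mu>1$ the quantitative summation estimate of Appendix A (the discrete counterpart of the sewing/Young bound) yields $\sum_m\ell_m^\mu\leq c\lln t_q^n-t_p^n\rrn^\mu$, hence (\ref{contr-2}).

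For the $\cb_{\ga',p}$-estimate (\ref{contr-1}) I would bound each interior summand by the smaller of its two available controls: the contraction control $c\,\ell_m^\ka\,\cn[\delha J^n;\cac_3^\ka(\llbracket t_p^n,t_q^n\rrbracket_n;\cb_{\ga',p})]$ and the smoothing control $c\lln t_q^n-\tilde t_{k+1}\rrn^{-\ga'}\ell_m^\mu\,\cn[\delha J^n;\cac_3^\mu(\llbracket t_p^n,t_q^n\rrbracket_n;\cb_p)]$, the rightmost-point summands (which bear no semigroup factor and hence no singular weight) being handled directly by the contraction control in $\cb_{\ga',p}$. Splitting the sum accordingly, the contraction part accumulates to $c\lln t_q^n-t_p^n\rrn^\ka$ times the corresponding $\cac_3^\ka(\cb_{\ga',p})$ semi-norm, and the smoothing part to $c\lln t_q^n-t_p^n\rrn^{\mu-\ga'}$ times the $\cac_3^\mu(\cb_p)$ semi-norm, which together give (\ref{contr-1}).

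The crux of the argument, and the only genuinely delicate point, is the summation of the smoothing contributions, because of the singular weight $\lln t_q^n-\tilde t_{k+1}\rrn^{-\ga'}$ measuring the distance from the merged point to the right endpoint. This is precisely where Davie's bare middle-point bisection fails: bisection controls the gap lengths $\ell_m$ but not the location of the merged points relative to $t_q^n$, so these weights need not be summable. The algorithm of Appendix A is tailored exactly so that the weighted (smoothing) contributions accumulate to $\lln t_q^n-t_p^n\rrn^{\mu-\ga'}$ — this uses $\mu>1$ to beat the singular weight — while the contraction contributions, together with the rightmost removals that carry no weight at all, accumulate to $\lln t_q^n-t_p^n\rrn^\ka$. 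Granting these two summation bounds as the output of Appendix A, the two announced estimates follow from the scheme above.
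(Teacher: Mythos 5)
Your proposal is correct and follows essentially the same route as the paper's proof: the same telescoping decomposition read through Remark \ref{rk:partitions}, the same dichotomy between the contraction bound (for the unconvolved, rightmost-removal increments, measured in $\cac_3^\ka(\cb_{\ga',p})$) and the smoothing bound $\norm{S_{ts}\vp}_{\cb_{\ga',p}}\leq c\lln t-s\rrn^{-\ga'}\norm{\vp}_{\cb_p}$ (for the convolved interior-removal increments), and the same reduction of the weighted summation to the estimate (\ref{contr-unif}) of Proposition \ref{resu-algo} in Appendix A. The only cosmetic deviation is that you take the minimum of the two available controls on each interior summand, whereas the paper simply applies the smoothing control to all of them; the resulting bound is identical.
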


\begin{proof}
We use the notation of Appendix A. By refering to Remark \ref{rk:partitions}, one easily deduces
\begin{multline*}
\sum_{m=0}^{N-1} \lcl J^{n,m}_{t_q^n t_p^n}-J^{n,m+1}_{t_q^n t_p^n} \rcl\\
=\sum_{r=1}^{M-1} \lcl (\delha J^n)_{t_q^n t^n_{p+k_{A_{r-1}+1}}t^n_{p+k_{A_{r-1}+1}^-}}+\sum_{m=A_{r-1}+2}^{A_{r}} S_{t_q^n t^n_{p+k_m^+}} (\delha J^n)_{t^n_{p+k_m^+}t^n_{p+k_m}t^n_{p+k_m^-}} \rcl\\
+(\delha J^n)_{t_q^n t^n_{p+k_{A_{M-1}+1}} t^n_{p+k^-_{A_{M-1}+1}}}+\mathbf{1}_{\{A_{M-1}+1 < N-1\}} (\delha J^n)_{t_q^n t^n_{p+k_{N-1}}t^n_p}.
\end{multline*}
Then, if $C_n:= \cn[\delha J^n;\cac_3^\ka(\llbracket t_p^n,t_q^n\rrbracket_n;\cb_{\ga',p})]+\cn[\delha J^n;\cac_3^\mu(\llbracket t_p^n,t_q^n\rrbracket_n;\cb_p)]$, one has
\bean
\lefteqn{\sum_{m=0}^{N-1} \norm{J^{n,m}_{t_q^nt_p^n}-J^{n,m+1}_{t_q^n t_p^n}}_{\cb_{\ga',p}}}\\
&\leq & 2 \, C_n \lln t_q^n-t_p^n \rrn^{\ka}\\
& &+C_n\sum_{r=0}^{M-1}\lcl \lln t_q^n-t^n_{p+k^-_{A_{r-1}+1}}\rrn^{\ka}+\sum_{m=A_{r-1}+2}^{A_{r}} \lln t_q^n-t^n_{p+k_m^+} \rrn^{-\ga'} \lln t^n_{p+k_m^+}-t^n_{p+k_m^-} \rrn^\mu \rcl \\
&\leq & C_n \lcl \lln t_q^n-t_p^n \rrn^{\ka}+\lln t_q^n-t_p^n \rrn^{\mu-\ga'}\rcl\\
& & \lcl 2+\sum_{r=0}^{M-1} \lcl \lln 1-\frac{k^-_{A_{r-1}+1}}{N} \rrn^{\ka}+\frac{1}{N^\mu} \sum_{m=A_{r-1}+2}^{A_{r}} \lln 1-\frac{k_m^+}{N}\rrn^{-\ga'} \lln k_m^+-k_m^-\rrn^\mu \rcl\rcl\\
&\leq & c_{\ka,\mu,\ga'}  \lcl \lln t_q^n-t_p^n \rrn^{\ka}+\lln t_q^n-t_p^n \rrn^{\mu-\ga'}\rcl C_n, 
\eean
thanks to Proposition \ref{resu-algo}. The second control (\ref{contr-2}) can be obtained with the same arguments, upon noticing that (\ref{contr-unif}) entails in particular 
$$\sum_{r=1}^{M-1} \lcl \lln 1-\frac{k_{A_{r-1}+1}^-}{N} \rrn^{\mu}+\frac{1}{N^\mu}\sum_{m=A_{r-1}+2}^{A_{r}}  \lln k_m^+-k_m^- \rrn^\mu \rcl \leq c_{\mu} < \infty.$$

\end{proof}

\begin{lemma}
For every path $y:[0,1] \to \cb_p$ and all $s<u<t \in [0,1]$,
\begin{equation}\label{decompo-delha-j-n-2}
(\delha J^y)_{tus}=X^{x,i}_{tu} \der(f_i(y))_{us}-X^{x,i}_{tu} (\der x^j)_{us} F_{ij}(y_s)+X^{xx,ij}_{tu} \der(F_{ij}(y))_{us} 
\end{equation}
and also
\begin{equation}\label{dec-delha-j-n}
(\delha J^y)_{tus}=I_{tus}+II_{tus}+III_{tus}+IV_{tus},
\end{equation}
with
\begin{equation}\label{compare-1}
I_{tus}:=X^{x,i}_{tu} \lp \int_s^1 dr \, f_i'(y_s+r(\der y)_{us}) \cdot K^y_{us} \rp,
\end{equation}
\begin{equation}\label{compare-2}
II_{tus}:=X^{x,i}_{tu} \lp \int_s^1 dr \, f_i'(y_s+r(\der y)_{us}) \cdot \lcl a_{us}y_s+X^{ax,j}_{us}f_j(y_s) \rcl \rp,
\end{equation}
\begin{equation}\label{compare-3}
III_{tus}:=X^{x,i}_{tu} \lp \int_0^1 dr \, \lc f_i'(y_s+r(\der y)_{us})-f_i'(y_s)\rc \cdot (\der x^j)_{us} f_j(y_s) \rp,
\end{equation}
\begin{equation}\label{compare-4}
IV_{tus}:=X^{xx,ij}_{tus} \der (F_{ij}(y))_{us}.
\end{equation}
\end{lemma}

\begin{proof}
Those are only straightforward expansions. For (\ref{decompo-delha-j-n-2}), we use the fact that if $m_{ts}:=g_{ts} h_s$, then $(\delha m)_{tus}=(\delha g)_{tus}h_s-g_{tu} (\der h)_{us}$, together with the algebraic relations
$$(\delha X^{x,i})_{tus}=0 \quad , \quad (\delha X^{xx,ij})_{tus}=X^{x,i}_{tu} (\der x^j)_{us}  \quad \quad \text{for all} \ s\leq u\leq t,$$
that can be readily checked from the expressions (\ref{defi-x-x}) and (\ref{defi-x-x-x}). The expansion of $\der(f_i(y))_{us}-(\der x^j)_{us} F_{ij}(y_s)$ which then leads to (\ref{dec-delha-j-n}) has already been elaborated on in the proof of Proposition \ref{prop:cas-regu}, see (\ref{exp-m-i}).
\end{proof}

\subsection{Existence of a solution}

Thanks to the above preliminary results, we are first able to control $J^n$ on successive time intervals independent of $n$:

\begin{proposition}\label{prop:inter-exi}
Suppose that $\mu,\ep$ satisfy
\begin{equation}\label{jeu-coeff-2}
3\ga > \mu >1 \quad , \quad \ga+\ga'>\mu >1 \quad , \quad \ga-(\ga'-\frac{1}{2}) >\ep > 0.
\end{equation}
Then there exists a time $T_0=T_0(x,f,\ga,\ga',\mu, \ep) >0$, $T_0\in \Pi^n$, such that for any $k$,
\begin{equation}\label{contr-sob-j}
\cn[J^n;\cac_2^\ep(\llbracket kT_0,(k+1)T_0 \wedge 1 \rrbracket_n;\cb_{\ga',p})] \leq 1+\norm{y^n_{kT_0}}_{\cb_{\ga',p}}
\end{equation}
and
\begin{equation}\label{contr-b-p-2}
\cn[J^n;\cac_2^\mu(\llbracket kT_0,(k+1)T_0 \wedge 1 \rrbracket_n;\cb_{p})] \leq 1+\norm{y^n_{kT_0}}_{\cb_{\ga',p}}.
\end{equation}
\end{proposition}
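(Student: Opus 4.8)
The plan is to set up a closed, self-referential estimate for $J^n$ on the generic interval $\llbracket kT_0,(k+1)T_0\wedge 1\rrbracket_n$ and then to absorb the self-referential part by choosing $T_0$ small, \emph{independently of $n$}. Write
$\Phi^k_n:=\cn[J^n;\cac_2^\ep(\llbracket kT_0,(k+1)T_0\wedge 1\rrbracket_n;\cb_{\ga',p})]+\cn[J^n;\cac_2^\mu(\llbracket kT_0,(k+1)T_0\wedge 1\rrbracket_n;\cb_p)]$ for the quantity to be bounded. First I would apply Lemma \ref{lemme:base}, taken with $\ka=\ep$, to reduce the control of these two $\cac_2$-seminorms to that of the two third-order seminorms $\cn[\delha J^n;\cac_3^\ep(\cdots;\cb_{\ga',p})]$ and $\cn[\delha J^n;\cac_3^\mu(\cdots;\cb_p)]$; note that the inequality $\mu-\ga'>\ep$ (which follows from $\mu>1$, $\ga'<\ga+1/2$ and $\ga<1/2$, all contained in (\ref{jeu-coeff-2})) guarantees that the second power appearing in (\ref{contr-1}) is harmless.

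The next step is to estimate, in both $\cb_p$ and $\cb_{\ga',p}$, each of the four terms $I,II,III,IV$ of the decomposition (\ref{dec-delha-j-n}) of $(\delha J^n)_{tus}$, using the regularity estimates (\ref{regu-x-x})--(\ref{regu-x-x-x}) for $X^x,X^{ax},X^{xx}$, the boundedness of $f,f',f''$ from Assumption (F)$_2$, and the Nemytskii controls (\ref{contr-nem-1})--(\ref{contr-nem-2}). The $\cb_p$-estimates are the benign ones: since $f$ and $F_{ij}$ are bounded and Lipschitz, and since by (\ref{incr-std}) the increments decompose as $(\der y^n)=(\delha y^n)+a\,y^n$, every factor is either bounded or carries a positive power of the interval length together with an \emph{at most linear} dependence on $\norm{y^n_{kT_0}}_{\cb_{\ga',p}}$; the exponents $3\ga>\mu$ and $\ga+\ga'>\mu$ are precisely what place the resulting bound in $\cac_3^\mu(\cb_p)$ with a prefactor $T_0^{\theta}$, $\theta>0$. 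The decisive structural point is that term $I$ is built on $K^{y^n}=J^n+X^{xx,ij}F_{ij}(y^n)$ by (\ref{defi-k})--(\ref{defi-j}): the piece carrying $J^n$ produces a contribution $\lesssim T_0^{\theta}\,\Phi^k_n$, the self-referential term to be absorbed, while the piece carrying $X^{xx}F$ is controlled by the boundedness of $F$.

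The main obstacle is the $\cb_{\ga',p}$-estimate, where a naive bound on quantities such as $f_i(y^n_s)$ or $F_{ij}(y^n_s)$ in the strong norm would trigger the \emph{quadratic} Nemytskii control (\ref{contr-nem-2}) and produce a non-absorbable term in $\norm{y^n_{kT_0}}_{\cb_{\ga',p}}^2$ (this is the difficulty flagged in the introduction). The resolution -- and here the hypotheses $\ga'>1-\ga>1/2$ and $\ga'<\ga+1/2$ are essential -- is to route every estimate through the intermediate space $\cb_{1/2,p}$: there the Nemytskii control (\ref{contr-nem-1}) is only \emph{linear}, the embedding $\cb_{\ga',p}\hookrightarrow\cb_{1/2,p}$ holds, and the semigroup gain $\norm{a_{us}\vp}_{\cb_{1/2,p}}\leq c\,\lln u-s\rrn^{\ga'-1/2}\norm{\vp}_{\cb_{\ga',p}}$ restores a positive power of the interval length. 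One then transfers back into $\cb_{\ga',p}$ using the smoothing part of (\ref{regu-x-x}) and (\ref{regu-x-x-x}), namely $X^{x}\in\cac_2^{\ga-(\ga'-1/2)}(\cl(\cb_{1/2,p},\cb_{\ga',p}))$ and $X^{xx}\in\cac_2^{2\ga-(\ga'-1/2)}(\cl(\cb_{1/2,p},\cb_{\ga',p}))$, whose gain exponent $\ga'-1/2$ is an admissible smoothing order precisely because $0\leq\ga'-1/2<\ga$. In this way the $\cb_{\ga',p}$-regularity demanded of $\delha J^n$ is delivered by the time-regularity of the $X$-processes, the solution size is bookkept only through $\sup\norm{y^n_s}_{\cb_{1/2,p}}$, and the quadratic estimate (\ref{contr-nem-2}) is never invoked; consequently the whole bound stays linear in $\norm{y^n_{kT_0}}_{\cb_{\ga',p}}$.

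Collecting the four terms and feeding them into Lemma \ref{lemme:base} yields an inequality of the form $\Phi^k_n\leq c\,T_0^{\theta_1}\bigl(1+\norm{y^n_{kT_0}}_{\cb_{\ga',p}}\bigr)+c\,T_0^{\theta_2}\,\Phi^k_n$, with $\theta_1,\theta_2>0$ and with $c$ depending only on $\norm{\xrgh}_\ga$, $f$ and the fixed exponents -- in particular independent of $n$, of $k$, and of $\norm{y^n_{kT_0}}_{\cb_{\ga',p}}$. Choosing $T_0\in\Pi^n$ small enough (before $n$, and uniformly in $k$) so that $c\,T_0^{\theta_2}\leq 1/2$ lets me absorb the last term, and shrinking $T_0$ once more so that $2c\,T_0^{\theta_1}\leq 1$ gives $\Phi^k_n\leq 1+\norm{y^n_{kT_0}}_{\cb_{\ga',p}}$, which is exactly (\ref{contr-sob-j}) and (\ref{contr-b-p-2}). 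The point that makes this final step legitimate -- the same $T_0$ working on every successive interval -- is precisely the linear (rather than quadratic) dependence on $\norm{y^n_{kT_0}}_{\cb_{\ga',p}}$ achieved in the previous paragraph.
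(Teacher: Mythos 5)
Your proof is correct, and its analytic core coincides with the paper's: Lemma \ref{lemme:base}, the two decompositions (\ref{dec-delha-j-n}) and (\ref{decompo-delha-j-n-2}), the mapping properties (\ref{regu-x-x})--(\ref{regu-x-x-x}), and --- decisively --- the routing of every composition term through $\cb_{1/2,p}$ via the linear Nemytskii bound (\ref{contr-nem-1}), which is exactly how the paper keeps the dependence on $\norm{y^n_{kT_0}}_{\cb_{\ga',p}}$ linear in its estimates (\ref{contr-norm-sob}), (\ref{est-2}), (\ref{est-3}) and in its $\cb_{\ga',p}$-bound on $(\delha J^n)_{tus}$. Where you genuinely differ is the mechanism that closes the self-referential loop. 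You derive a closed inequality $\Phi^k_n\leq c\,T_0^{\theta_1}\{1+\norm{y^n_{kT_0}}_{\cb_{\ga',p}}\}+c\,T_0^{\theta_2}\,\Phi^k_n$ on the whole block and absorb the last term. The paper instead runs an induction over the partition points: assuming (\ref{contr-sob-j})--(\ref{contr-b-p-2}) on $\llbracket 0,t_q^n\rrbracket_n$, it derives the intermediate controls on $y^n$ and $K^n$ there, observes that any triple $s<u<t\in\llbracket 0,t_{q+1}^n\rrbracket_n$ only involves $y^n$, $K^n$ at times $\leq t_q^n$, and so extends both bounds to $\llbracket 0,t_{q+1}^n\rrbracket_n$ with a $T_0$ chosen uniformly in $q$; the quantity being bounded never appears on the right-hand side. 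The trade-off is this: the paper's induction needs no a priori information, whereas your absorption step is only legitimate if $\Phi^k_n<\infty$ beforehand --- one cannot subtract $c\,T_0^{\theta_2}\Phi^k_n$ from an infinite quantity. In the present discrete setting this is harmless, since $\Phi^k_n$ is a maximum over finitely many pairs of points of $\Pi^n$ and $y^n_{t_j^n}\in\cb_{\ga',p}$ for every $j$ (the scheme (\ref{schema}) maps $\cb_{\ga',p}$ into itself, by the smoothing of $X^{x}$, $X^{xx}$ from $\cb_{1/2,p}$ into $\cb_{\ga',p}$, admissible because $\ga'-1/2<\ga$, combined with (\ref{contr-nem-1})); but this finiteness check is the one point your write-up must state explicitly for the argument to be complete. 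Your remaining bookkeeping --- the verification that $\mu-\ga'>\ep$, the identification of term $I$ (through $K^{y^n}=J^n+X^{xx,ij}F_{ij}(y^n)$) as the sole source of the absorbable term, and the uniformity of the constants in $n$ and $k$ --- is accurate.
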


\begin{proof}
This is an iteration procedure over the points of the partition, for which we first focus on the case $k=0$ in (\ref{contr-sob-j}) and (\ref{contr-b-p-2}). Assume that both estimates hold true on $\llbracket 0,t_q^n \rrbracket_n$. Then, for any $t\in \llbracket 0,t_q^n \rrbracket_n$, one has, thanks to (\ref{regu-x-x}), (\ref{regu-x-x-x}) and (\ref{contr-nem-1}),
\begin{eqnarray}
\norm{y_t^n}_{\cb_{\ga',p}} &\leq & \norm{J^n_{t0}}_{\cb_{\ga',p}}+\norm{S_{t0}\psi}_{\cb_{\ga',p}}+c^0_{x}t^{\ga-(\ga'-\frac{1}{2})} \lcl \norm{f_i(\psi)}_{\cb_{1/2,p}}+\norm{F_{ij}(\psi)}_{\cb_{1/2,p}}\rcl \label{line-cont}\\
&\leq & \norm{J^n_{t0}}_{\cb_{\ga',p}}+\norm{S_{t0}\psi}_{\cb_{\ga',p}}+c^1_{x,f}t^{\ga-(\ga'-\frac{1}{2})} \lcl 1+\norm{\psi}_{\cb_{\ga',p}}\rcl \nonumber\\
&\leq & c^2_{x,f} \lcl 1+\norm{\psi}_{\cb_{\ga',p}} \rcl,\label{contr-norm-sob}
\end{eqnarray}
and so $\cn[y^n;\cac_1^0(\llbracket 0,t_q^n \rrbracket_n,\cb_{\ga',p})] \leq c^2_{x,f} \lcl 1+\norm{\psi}_{\cb_{\ga',p}} \rcl$. Besides, if $s<t \in \llbracket 0,t_q^n \rrbracket_n$,
\bean
\norm{(\delha y^n)_{ts}}_{\cb_{p}} &\leq & \norm{J^n_{ts}}_{\cb_{p}}+\norm{X^{x,i}_{ts}f_i(y^n_s) }_{\cb_{p}}+\norm{X^{xx,ij}_{ts}F_{ij}(y^n_s)}_{\cb_{p}}\\
&\leq & \lln t-s \rrn^\ga c^3_{x,f} \lcl 1+\norm{\psi}_{\cb_{\ga',p}} \rcl,
\eean
and hence
\begin{equation}\label{est-2}
\cn[y^n;\cacha_1^\ga(\llbracket 0,t_q^n \rrbracket_n;\cb_{p})] \leq c^3_{x,f} \lcl 1+\norm{\psi}_{\cb_{\ga',p}} \rcl.
\end{equation}
One can also rely on the estimate
\begin{equation}\label{est-3}
\norm{K^n_{ts}}_{\cb_p} \leq \norm{J^n_{ts}}_{\cb_p}+\norm{X^{xx,ij}_{ts}F_{ij}(y^n_s)}_{\cb_p}\leq c^4_{x,f} \lln t-s \rrn^{2\ga} \lcl 1+\norm{\psi}_{\cb_{\ga',p}} \rcl.
\end{equation}
Now, from the decomposition (\ref{dec-delha-j-n}), we easily deduce, for all $0\leq s < u<t \in \llbracket 0,t^n_{q+1}\rrbracket_n$,
$$
\norm{(\delha J^n)_{tus}}_{\cb_p}
\leq c^5_{x,f} \lcl 1+\norm{\psi}_{\cb_{\ga',p}}\rcl \lcl \lln t-s \rrn^{3\ga}+\lln t-s\rrn^{\ga+\ga'} \rcl.
$$
Indeed, one has for instance
\bean
\lefteqn{\norm{\lc f_i'(y_s+r(\der y)_{us})-f_i'(y_s)\rc \cdot (\der x^j)_{us}f_j(y_s)}_{\cb_p}}\\
&\leq & c_{x,f} \lln u-s \rrn^\ga \norm{(\der y)_{us}}_{\cb_p}\\
&\leq & c_{x,f} \lln u-s \rrn^\ga \lcl \norm{(\delha y)_{us}}_{\cb_p}+\norm{a_{us}y_s}_{\cb_p} \rcl\\
&\leq & c_{x,f} \lcl 1+\norm{\psi}_{\cb_{\ga',p}} \rcl \lcl \lln u-s \rrn^{2\ga}+\lln u-s \rrn^{\ga+\ga'} \rcl \ \leq \ c_{x,f}\lln u-s \rrn^{2\ga} \lcl 1+\norm{\psi}_{\cb_{\ga',p}} \rcl,
\eean
where the constant $c_{x,f}$ may of course vary from line to line. Consequently,
$$\cn[\delha J^n;\cac_3^\mu(\llbracket 0,t_{q+1}^n \rrbracket_n ;\cb_{p})] \leq c^5_{x,f} \lcl 1+\norm{\psi}_{\cb_{\ga',p}} \rcl \lcl T_0^{3\ga-\mu}+T_0^{\ga+\ga'-\mu} \rcl.$$
On the other hand, it is readily checked from (\ref{decompo-delha-j-n-2}) that
$$\norm{(\delha J^n)_{tus} }_{\cb_{\ga',p}} \leq c^6_{x,f} \lcl 1+\norm{\psi}_{\cb_{\ga',p}}\rcl \lln t-s \rrn^{\ga-(\ga'-\frac{1}{2})},$$
and therefore
$$\cn[\delha J^n;\cacha_3^{\ga-(\ga'-\frac{1}{2})}(\llbracket 0,t^n_{q+1}\rrbracket_n;\cb_{\ga',p})] \leq c^6_{x,f} \lcl 1+\norm{\psi}_{\cb_{\ga',p}} \rcl .$$
By using the estimate (\ref{contr-1}), we get
$$\cn[J^n;\cac_2^\ep(\llbracket 0,t^n_{q+1}\rrbracket_n ;\cb_{\ga',p})] \leq c^7_{x,f} \lcl 1+\norm{\psi}_{\cb_{\ga',p}}\rcl \lp  T_0^{3\ga-\mu}+T_0^{\ga+\ga'-\mu}+ T_0^{\ga-(\ga'-\frac{1}{2})-\ep} \rp.$$
It only remains to pick $T_0$ such that
$$c^7_{x,f}\lp  T_0^{3\ga-\mu}+T_0^{\ga+\ga'-\mu}+ T_0^{\ga-(\ga'-\frac{1}{2})-\ep} \rp \leq 1.$$
We can follow the same lines to show (\ref{contr-b-p-2}) from the estimate (\ref{contr-2}).

\smallskip

\noindent
It is now easy to realize that the same reasoning (with the same constants) can be applied on the interval $[T_0,2T_0]$ by replacing $\psi$ with $y^n_{T_0}$, and then on the interval $[2T_0,3T_0]$, etc.
\end{proof}

\begin{corollary}\label{coro:reco}
With the notation of Proposition \ref{prop:inter-exi}, there exists a constant $c_{x,f}$ such that for any $k$,
\begin{equation}\label{est-prol-1}
\cn[J^n;\cac_2^\mu(\llbracket kT_0,(k+2)T_0 \wedge 1 \rrbracket_n;\cb_p)] \leq \lcl 1+\norm{y^n_{(k+1)T_0}}_{\cb_{\ga',p}} \rcl+c_{x,f} \lcl 1+\norm{y^n_{kT_0}}_{\cb_{\ga',p}}\rcl,
\end{equation}
\begin{equation}\label{est-prol-2}
\cn[J^n;\cac_2^\ep(\llbracket kT_0,(k+2)T_0 \wedge 1 \rrbracket_n;\cb_{\ga',p})] \leq \lcl 1+\norm{y^n_{(k+1)T_0}}_{\cb_{\ga',p}} \rcl+c_{x,f} \lcl 1+\norm{y^n_{kT_0}}_{\cb_{\ga',p}}\rcl.
\end{equation}
\end{corollary}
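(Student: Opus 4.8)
The proof rests on the elementary gluing identity which, for any $s\le u\le t$ in $\Pi^n$, follows at once from the definition (\ref{incr-twis}) of the twisted increment:
$$J^n_{ts}=J^n_{tu}+S_{tu}J^n_{us}+(\delha J^n)_{tus}.$$
The plan is to fix $s<t$ in $\llbracket kT_0,(k+2)T_0\wedge 1\rrbracket_n$ and to distinguish three cases according to the position of $s,t$ relative to the middle point $u:=(k+1)T_0$ (which lies in $\Pi^n$ since $T_0$ does). When $s$ and $t$ belong to the same half-interval, the single-interval bounds (\ref{contr-sob-j}) and (\ref{contr-b-p-2}) of Proposition \ref{prop:inter-exi} apply verbatim and yield either $1+\norm{y^n_{kT_0}}_{\cb_{\ga',p}}$ or $1+\norm{y^n_{(k+1)T_0}}_{\cb_{\ga',p}}$, both of which are dominated by the right-hand sides of (\ref{est-prol-1})--(\ref{est-prol-2}). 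The genuine work concerns the crossing case $kT_0\le s\le u\le t$, treated with the identity above at $u=(k+1)T_0$.

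In the crossing case I would estimate the three resulting terms separately. The term $J^n_{tu}$ has both endpoints in the second half-interval, so after dividing by $\lln t-s\rrn^\mu$ (resp.\ $\lln t-s\rrn^\ep$) and using $\lln t-u\rrn\le\lln t-s\rrn$, the bound (\ref{contr-b-p-2}) (resp.\ (\ref{contr-sob-j})) produces the clean contribution $1+\norm{y^n_{(k+1)T_0}}_{\cb_{\ga',p}}$ \emph{with constant} $1$, which is exactly the first bracket on the right-hand side. For $S_{tu}J^n_{us}$, both endpoints of $J^n_{us}$ lie in the first half-interval; since $S$ is a semigroup of contractions on every space $\cb_{\al,p}$ of the scale (because $A_p^\al$ commutes with $S$ and $S$ contracts $L^p$, so $\norm{S_{tu}\vp}_{\cb_{\al,p}}=\norm{S_{tu}A_p^\al\vp}_{\cb_p}\le\norm{\vp}_{\cb_{\al,p}}$ for $\al\in\{0,\ga'\}$), the bounds (\ref{contr-b-p-2})--(\ref{contr-sob-j}) control this term by $1+\norm{y^n_{kT_0}}_{\cb_{\ga',p}}$, which falls into the second bracket.

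The main and only delicate point is the interface term $(\delha J^n)_{tus}$. Here I would invoke the exact expansions (\ref{dec-delha-j-n}) and (\ref{decompo-delha-j-n-2}), exactly as in the proof of Proposition \ref{prop:inter-exi}: the operator factors $X^{x,i}_{tu}$ and $X^{xx,ij}_{tu}$ are evaluated on $[u,t]$ and contribute only their data-free operator norms from (\ref{regu-x-x})--(\ref{regu-x-x-x}), while every remaining factor is an increment over $[s,u]$ and is thus governed by the first half-interval alone through $K^n_{us}$, $a_{us}y^n_s$, $X^{ax,j}_{us}f_j(y^n_s)$ and $\der(F_{ij}(y^n))_{us}$. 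Reproducing on $[s,u]$ the estimates of Proposition \ref{prop:inter-exi} and using that the exponents $3\ga$, $\ga+\ga'$ exceed $\mu$ and $\ga-(\ga'-1/2)$ exceeds $\ep$ (cf.\ (\ref{jeu-coeff-2})), this term is bounded by $c_{x,f}\lcl 1+\norm{y^n_{kT_0}}_{\cb_{\ga',p}}+\norm{y^n_{(k+1)T_0}}_{\cb_{\ga',p}}\rcl$; folding the middle-point norm back through the one-step propagation estimate $\norm{y^n_{(k+1)T_0}}_{\cb_{\ga',p}}\le c_{x,f}\lcl 1+\norm{y^n_{kT_0}}_{\cb_{\ga',p}}\rcl$ (which is (\ref{contr-norm-sob}) read on $[kT_0,(k+1)T_0]$) places it entirely in the second bracket. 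Summing the three contributions yields (\ref{est-prol-1}) and (\ref{est-prol-2}). I expect the only real source of care to be the bookkeeping of which norm ($\cb_p$ or $\cb_{\ga',p}$) each factor is measured in, so that the Hölder exponents coming from the $X$-processes correctly match $\mu$ and $\ep$ respectively.
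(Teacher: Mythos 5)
Your proposal is correct and follows essentially the same route as the paper: split at the midpoint $(k+1)T_0$ via the cocycle identity $J^n_{ts}=J^n_{tu}+S_{tu}J^n_{us}+(\delha J^n)_{tus}$, control the two pure terms by Proposition \ref{prop:inter-exi} on each half-interval (using contractivity of $S$), and bound the interface term $(\delha J^n)_{t,(k+1)T_0,s}$ through the decompositions (\ref{dec-delha-j-n}) and (\ref{decompo-delha-j-n-2}) together with the estimates (\ref{contr-norm-sob}), (\ref{est-2}) and (\ref{est-3}), which depend only on the data over $[kT_0,(k+1)T_0]$. Your extra step of folding $\norm{y^n_{(k+1)T_0}}_{\cb_{\ga',p}}$ back through the one-step propagation estimate is harmless but not needed, since the interface term is already governed by the left half-interval alone.
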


\begin{proof}
If $kT_0 \leq s < (k+1)T_0 \leq t <(k+2)T_0$,
$$J^n_{ts}=J^n_{t,(k+1)T_0}-S_{t,(k+1)T_0} J^n_{(k+1)T_0,s}-(\delha J^n)_{t,(k+1)T_0,s}.$$
We already know that
$$\norm{J^n_{t,(k+1)T_0}}_{\cb_p}+\norm{J^n_{(k+1)T_0,s}}_{\cb_p} \leq \lln t-s \rrn^\mu  \lcl 2+\norm{y^n_{(k+1)T_0}}_{\cb_{\ga',p}}+\norm{y^n_{kT_0}}_{\cb_{\ga',p}}\rcl.$$
By using the decomposition (\ref{dec-delha-j-n}), together with the estimates (\ref{contr-norm-sob}), (\ref{est-2}) and (\ref{est-3}), we get $\norm{(\delha J^n)_{t,(k+1)T_0,s}}_{\cb_p} \leq c_x \lln t-s \rrn^\mu \lcl 1+\norm{y^n_{kT_0}}_{\cb_{\ga',p}} \rcl$, which yields (\ref{est-prol-1}). (\ref{est-prol-2}) can be shown with the same arguments.
\end{proof}

\begin{proof}[Proof of Theorem \ref{theo-exi}]
With the same estimates as in (\ref{contr-norm-sob}), we first deduce from Proposition \ref{prop:inter-exi}
$$\cn[y^n;\cac_1^0(\llbracket kT_0,(k+1)T_0 \wedge 1 \rrbracket_n;\cb_{\ga',p}] \leq c_{x,f}^1 \lcl 1+\norm{y^n_{kT_0}}_{\cb_{\ga',p}}\rcl ,$$
where the constant $c_{x,f}^1$ does not depend on $k$. As $T_0$ is independent of $y^n$, this leads to
\begin{equation}\label{exi-sol-contr-unif-1}
\cn[y^n;\cac_1^0(\llbracket 0,1\rrbracket_n;\cb_{\ga',p})] \leq c_{x,f}^2 \lcl 1+\norm{\psi}_{\cb_{\ga',p}} \rcl.
\end{equation}
From this uniform control, we get, by repeating the argument of Corollary \ref{coro:reco},
\begin{equation}\label{exi-sol-contr-unif-3}
\cn[J^n;\cac_2^\mu(\llbracket 0,1\rrbracket_n;\cb_p)] \leq c_{x,f}^4 \lcl 1+\norm{\psi}_{\cb_{\ga',p}} \rcl,
\end{equation}
and then
\begin{equation}\label{exi-sol-contr-unif-2}
\cn[y^n;\cacha_1^\ga(\llbracket 0,1\rrbracket_n;\cb_p)] \leq c_{x,f}^5 \lcl 1+\norm{\psi}_{\cb_{\ga',p}} \rcl.
\end{equation}
Now remember that $y^n$ is extended on $[0,1]$ by linear interpolation, and so
\bean
\cn[y^n;\cac_1^\ga([0,1];\cb_p)] & \leq & 3\cn[y^n;\cac_1^\ga(\llbracket 0,1\rrbracket_n;\cb_p)]\\
&\leq & 3\cn[y^n;\cacha_1^\ga(\llbracket 0,1\rrbracket_n;\cb_p)]+c_{\ga'} \cn[y^n;\cac_1^0(\llbracket 0,1\rrbracket_n;\cb_{\ga',p})] \\
&\leq & c_{x,f}^6 \lcl 1+\norm{\psi}_{\cb_{\ga',p}} \rcl.
\eean
Thus, we are in a position to apply the Ascoli Theorem and to assert the existence of a subsequence $y^{n_k}$ of $y^n$ that converges to an element $y$ in $\cac_1^0([0,1];\cb_p)$. It remains to check that $y$ is a solution of (\ref{equa-gene}). To do so, let $s<t \in [0,1]$ and consider two sequences $s_{n_k}<t_{n_k}\in \Pi^{n_k}$ such that $s_{n_k}$ decreases to $s$ and $t_{n_k}$ increases to $t$. Then
\begin{equation}\label{decom-j-y}
\norm{J^y_{ts}}_{\cb_p} \leq \norm{J^y_{ts}-J^{y^{n_k}}_{ts}}_{\cb_p}+\norm{J^{y^{n_k}}_{ts}-J^{y^{n_k}}_{t_{n_k}s_{n_k}}}_{\cb_p}+\norm{J^{y^{n_k}}_{t_{n_k}s_{n_k}}}_{\cb_p}.
\end{equation}
On the one hand,
$$\norm{J^y_{ts}-J^{y^{n_k}}_{ts}}_{\cb_p} \leq c_{x,f} \cn[y-y^{n_k};\cac_1^0([0,1];\cb_p)] \to 0,$$
while on the other hand
\begin{multline*}
\norm{J^{y^{n_k}}_{ts}-J^{y^{n_k}}_{t_{n_k}s_{n_k}}}_{\cb_p} \leq c_f \lcl \norm{X^{x,i}_{ts}-X^{x,i}_{t_{n_k}s_{n_k}}}_{\cl(\cb_p,\cb_p)}+ \norm{X^{xx,ij}_{ts}-X^{xx,ij}_{t_{n_k}s_{n_k}}}_{\cl(\cb_p,\cb_p)}\rcl\\
+c_{x,f} \lcl \norm{y^{n_k}_t-y^{n_k}_{t_{n_k}}}_{\cb_p}+\norm{y^{n_k}_{s_{n_k}}-y^{n_k}_{s}}_{\cb_p} \rcl,
\end{multline*}
from which we easily deduce, with the uniform controls (\ref{exi-sol-contr-unif-1}) and (\ref{exi-sol-contr-unif-2}) in mind, $$\norm{J^{y^{n_k}}_{ts}-J^{y^{n_k}}_{t_{n_k}s_{n_k}}}_{\cb_p}\to 0.$$
Finally, owing to (\ref{exi-sol-contr-unif-3}),
$$\norm{J^{y^{n_k}}_{t_{n_k}s_{n_k}}}_{\cb_p}\leq c^7_{x,f} \lcl 1+\norm{\psi}_{\cb_{\ga',p}} \rcl \lln t-s \rrn^\mu.$$
Going back to (\ref{decom-j-y}), this proves that $J^y \in \cac_3^\mu([0,1];\cb_p)$. Then we follow the same lines starting with the estimate
$\cn[J^n;\cac_3^\ep(\llbracket 0,1 \rrbracket_n;\cb_{\ga',p})] \leq c^4_{x,f} \lcl 1+\norm{\psi}_{\cb_{\ga',p}} \rcl$ to get $J^y \in  \cac_3^\ep([0,1];\cb_{\ga',p})$, and so $y$ is indeed a solution of (\ref{equa-gene}) in $\cb_{\ga',p}$.
\end{proof}

\section{Uniqueness of the solution}\label{sec:uni}

In this section, we mean to prove Theorem \ref{theo-uni}. Accordingly, we assume that $p>n$ and that Conditions (A1), (A2), (X)$_\ga$ and (F)$_3$ are all checked. Let $y$ be a solution of (\ref{equa-gene}) in $\cb_{\ga',p}$, for some (fixed) parameter $\ga'\in (1-\ga,1/2+\ga)$, with initial condition $\psi \in \cb_{\ga',p}$, and let $y^n$ be the path described by the scheme (\ref{schema}), with the same initial condition $\psi$.

\

We introduce, for all $s<t\in \Pi^n$, the quantity
\begin{multline*}
\cn[y-y^n;\cq(\llbracket s,t\rrbracket_n)]:=\\
\cn[y-y^n;\cacha_1^\ga(\llbracket s,t\rrbracket_n;\cb_p)]+\cn[y-y^n;\cac_1^0(\llbracket s,t\rrbracket_n;\cb_{\ga',p})]+\cn[K^y-K^{y^n};\cac_2^{2\ga}(\llbracket s,t\rrbracket_n ;\cb_p)].
\end{multline*}
Besides, we fix $\mu >1$, $\ep >0$ such that $\norm{J^y_{ts}}_{\cb_p} \leq c \lln t-s \rrn^\mu$ and $\norm{J^y_{ts}}_{\cb_{\ga',p}} \leq c \lln t-s \rrn^\ep$.

\smallskip

\noindent
The proof of Theorem \ref{theo-uni} is based on the two following preliminary results, which aim at controlling, as in the previous section, the residual term $J$:

\begin{lemma}\label{lem:base-uni}
For all $\tilde{\mu}>1$ and $\ka >0$, there exists two constants $c_y,c_{\tilde{\mu}}$ such that if $s<t \in \Pi^n$,
\begin{multline*}
\norm{J^y_{ts}-J^{y^n}_{ts}}_{\cb_{\ga',p}} \leq c_{y} \lcl \frac{1}{(2^n)^\ep}+\frac{1}{(2^n)^{\mu-1}} \rcl +c_{\tilde{\mu}} \lcl \lln t-s \rrn^\ka+\lln t-s \rrn^{\tilde{\mu}-\ga'} \rcl\\
\lcl \cn[\delha (J^y-J^{y^n});\cac_3^\ka(\llbracket s,t\rrbracket_n;\cb_{\ga',p})]+\cn[\delha (J^y-J^{y^n});\cac_3^{\tilde{\mu}}(\llbracket s,t\rrbracket_n;\cb_{p})] \rcl.
\end{multline*} 
$$\norm{J^y_{ts}-J^{y^n}_{ts}}_{\cb_{p}} \leq \frac{c_y \lln t-s \rrn}{(2^n)^{\mu-1}} +c_{\tilde{\mu}} \lln t-s \rrn^{\tilde{\mu}} \cn[\delha (J^y-J^{y^n});\cac_3^{\tilde{\mu}}(\llbracket s,t\rrbracket_n;\cb_{p})].$$
\end{lemma}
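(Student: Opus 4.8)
The structure mirrors Lemma~\ref{lemme:base}, where the analogous bounds were shown for $J^n$ alone via the algorithm of Appendix~A and Proposition~\ref{resu-algo}. The new ingredient here is the presence of $J^y$ (the residual of the true solution) alongside $J^{y^n}$, and the extra terms $c_y/(2^n)^\ep$, $c_y/(2^n)^{\mu-1}$ that quantify how far $J^y$ is from being killed on the dyadic grid. Unlike $J^n$, which satisfies $J^n_{t^n_{k+1}t^n_k}=0$ exactly by construction, $J^y$ over a single mesh interval is only controlled by $\|J^y_{ts}\|_{\cb_p}\le c|t-s|^\mu$ and $\|J^y_{ts}\|_{\cb_{\ga',p}}\le c|t-s|^\ep$. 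The plan is therefore to run the telescopic/algorithmic decomposition of Lemma~\ref{lemme:base} on the \emph{difference} process $J^y-J^{y^n}$ and carefully track the boundary contributions coming from the non-vanishing single-interval values of $J^y$.

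First I would set $D:=J^y-J^{y^n}$ and apply exactly the algorithmic telescoping used in the proof of Lemma~\ref{lemme:base}, writing $D_{t^n_q t^n_p}$ as a sum over the decreasing sequence of partitions $(\Pi^{n,m})_m$ produced by the Appendix~A algorithm. The interior terms of this sum are of the form $S_{\cdot}(\delha D)_{\cdot\cdot\cdot}$, and these are handled verbatim as before: Proposition~\ref{resu-algo} converts the combinatorial sums $\sum_r[\,|1-k^-_{A_{r-1}+1}/N|^\ka+N^{-\tilde\mu}\sum_m|1-k^+_m/N|^{-\ga'}|k^+_m-k^-_m|^{\tilde\mu}\,]$ into a uniform constant $c_{\ka,\tilde\mu,\ga'}$, yielding the factor $\bigl[\,|t-s|^\ka+|t-s|^{\tilde\mu-\ga'}\,\bigr]$ times the two $\delha D$-seminorms. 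This reproduces the second line of the claimed $\cb_{\ga',p}$-bound and (via the estimate analogous to (\ref{contr-unif})) the seminorm factor in the $\cb_p$-bound.

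The genuinely new step — and the one I expect to be the main obstacle — is the treatment of the \emph{base level} of the telescoping, i.e.\ the very first partition $\Pi^{n,0}$ consisting of consecutive mesh points. For $J^n$ these base terms vanished identically; for $D$ they reduce to $\sum_i S_{\cdot} J^y_{t^n_{i+1}t^n_i}$ (since $J^{y^n}_{t^n_{i+1}t^n_i}=0$). I would bound each such single-mesh contribution using the a priori controls $\|J^y_{t^n_{i+1}t^n_i}\|_{\cb_{\ga',p}}\le c\,(2^{-n})^\ep$ and $\|J^y_{t^n_{i+1}t^n_i}\|_{\cb_p}\le c\,(2^{-n})^\mu$, together with the contraction/smoothing bounds $\|S_{ts}\vp\|_{\cb_{\ga',p}}\le \|\vp\|_{\cb_{\ga',p}}$ and $\|S_{ts}\|_{\cl(\cb_p,\cb_p)}\le 1$ from Assumption (A1). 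Summing over the at most $(2^n)|t-s|$ mesh intervals inside $[s,t]$ produces, for the $\cb_{\ga',p}$-estimate, a term of order $(2^n)\cdot(2^{-n})^\ep = c_y\,(2^n)^{1-\ep}$ — which is too large — so the correct accounting must instead telescope these base terms into $J^y_{t^n_q t^n_p}$ modulo the same $\delha J^y$ increments, exactly as in Remark~\ref{rk:partitions}. Concretely, $\sum_i S_{t^n_q t^n_{i+1}}J^y_{t^n_{i+1}t^n_i}=J^y_{t^n_q t^n_p}-J^{y,\Pi^{n,0}}_{t^n_q t^n_p}$ is itself governed by the $\delha J^y$-seminorms plus a single boundary value $J^y$ of size $(2^{-n})^\ep$ (resp.\ $(2^{-n})^{\mu-1}\cdot|t-s|$ in $\cb_p$ after extracting one factor $|t-s|$). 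This is precisely where the additive constants $c_y[(2^n)^{-\ep}+(2^n)^{-(\mu-1)}]$ and $c_y|t-s|(2^n)^{-(\mu-1)}$ enter, and getting the bookkeeping of these boundary remainders to collapse to a \emph{single} $(2^{-n})$-power rather than accumulating over all mesh points is the delicate point.

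Finally, the $\cb_p$-estimate follows the same template but is simpler: one uses $\|J^y_{ts}\|_{\cb_p}\le c|t-s|^\mu$ and the combinatorial bound (\ref{contr-unif}) to get the seminorm term $c_{\tilde\mu}|t-s|^{\tilde\mu}\,\cn[\delha D;\cac_3^{\tilde\mu}(\cdot;\cb_p)]$, while the base-level remainder contributes $c_y|t-s|(2^n)^{-(\mu-1)}$ after factoring one power of $|t-s|$ out of the $(2^{-n})^\mu$ per-interval bound and summing the remaining $(2^n)|t-s|$ mesh pieces. Throughout, I would simply import the constants and combinatorial estimates of Proposition~\ref{resu-algo} rather than re-deriving them, so that the only substantive work beyond Lemma~\ref{lemme:base} is the careful extraction of the two $(2^n)$-dependent remainder terms from the non-vanishing grid values of $J^y$.
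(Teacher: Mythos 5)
Your overall skeleton is the paper's own: split $J^y_{ts}-J^{y^n}_{ts}$ into the leveled difference $J^{y,\Pi^n}_{ts}-J^{y^n,\Pi^n}_{ts}$ (handled by the Appendix A algorithm exactly as in Lemma \ref{lemme:base}, which yields the $\delha(J^y-J^{y^n})$-seminorm terms, noting that $J^{y^n,\Pi^n}=J^{y^n}$ since $J^{y^n}$ vanishes on consecutive grid points), plus the remainder $R^{y,\Pi^n}_{ts}=J^y_{tt^n_{l-1}}+\sum_{i=k}^{l-2}S_{tt^n_{i+1}}J^y_{t^n_{i+1}t^n_i}$ coming from the non-vanishing grid values of $J^y$. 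Your $\cb_p$-estimate of this remainder (each term of size $c_y(2^{-n})^{\mu}$, at most $2^n|t-s|$ terms, hence $c_y|t-s|(2^n)^{-(\mu-1)}$) is also exactly the paper's.

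There is, however, a genuine gap in the $\cb_{\ga',p}$-estimate of the remainder, which is the heart of the lemma. You correctly observe that summing per-interval bounds $\|S_{tt^n_{i+1}}J^y_{t^n_{i+1}t^n_i}\|_{\cb_{\ga',p}}\leq c_y(2^{-n})^{\ep}$ over all mesh intervals gives $(2^n)^{1-\ep}$, which is useless; but your proposed fix --- ``telescope these base terms into $J^y_{t^n_qt^n_p}$ modulo the $\delha J^y$ increments'' --- is circular and loses the $n$-decay entirely. Indeed, the identity $J^y_{tt^n_{l-1}}+\sum_i S_{tt^n_{i+1}}J^y_{t^n_{i+1}t^n_i}=J^y_{ts}-J^{y,\Pi^n}_{ts}$ merely re-expresses the remainder through $J^y_{ts}$, which is of size $|t-s|^{\ep}$ (order one in $n$), and through $\delha J^y$-seminorm quantities, which carry no power of $2^{-n}$ either; neither can produce the additive constant $c_y\{(2^n)^{-\ep}+(2^n)^{-(\mu-1)}\}$ of the statement, and this constant must tend to $0$ with $n$ for the uniqueness argument of Theorem \ref{theo-uni} to work. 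The missing idea is the one displayed in the paper's estimate (\ref{utilite-sec-cond}): keep \emph{only} the single term $J^y_{tt^n_{l-1}}$ in the $\cb_{\ga',p}$-norm, where the a priori control gives $c_y(2^{-n})^{\ep}$, and for every other term use the analytic-semigroup smoothing $\|S_{tt^n_{i+1}}\vp\|_{\cb_{\ga',p}}\leq c\,|t-t^n_{i+1}|^{-\ga'}\|\vp\|_{\cb_p}$ so as to exploit the \emph{stronger} $\cb_p$-control $\|J^y_{t^n_{i+1}t^n_i}\|_{\cb_p}\leq c_y(2^{-n})^{\mu}$ with $\mu>1$; since $\ga'<1$ the singular sum satisfies $\sum_i|t-t^n_{i+1}|^{-\ga'}\leq c_{\ga'}2^n$, whence a total contribution $c_{y,\ga'}(2^n)^{-(\mu-1)}$. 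This interplay between the two a priori controls on $J^y$ (the $\ep$-H\"older control in $\cb_{\ga',p}$ used once, the $\mu$-H\"older control in $\cb_p$ used for the whole sum through smoothing) is precisely why Definition \ref{defi-solu} imposes both conditions, and it is absent from your proposal.
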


\begin{proof}
Going back to the notation of Subsection \ref{subsec:notations}, we decompose $J^y_{ts}-J^{y^n}_{ts}$ as 
$$J^y_{ts}-J^{y^n}_{ts}=\lc J^{y,\Pi^n}_{ts}-J^{y^n,\Pi^n}_{ts} \rc+R^{y,\Pi^n}_{ts},$$
with, if $s=t_k^n$ and $t=t_l^n$,
$$R^{y,\Pi^n}_{ts}:=J^y_{tt_{l-1}^n} +\sum_{i=k}^{l-2} S_{tt_{i+1}^n} J^y_{t_{i+1}^nt_i^n}.$$
To handle the term into brackets, we use the same arguments as in the proof of Lemma \ref{lemme:base}, which yield here
\begin{multline*}
\norm{J^{y,\Pi^n}_{ts}-J^{y^n,\Pi^n}_{ts}}_{\cb_{\ga',p}} \leq c_{\tilde{\mu},\ga'} \lcl \lln t-s \rrn^\ka+\lln t-s \rrn^{\tilde{\mu}-\ga'} \rcl\\
\lcl \cn[\delha (J^y-J^{y^n});\cac_3^\ka(\llbracket s,t\rrbracket_n;\cb_{\ga',p})]+\cn[\delha (J^y-J^{y^n});\cac_3^{\tilde{\mu}}(\llbracket s,t\rrbracket_n;\cb_{p})] \rcl
\end{multline*} 
and
$$\norm{J^{y,\Pi^n}_{ts}-J^{y^n,\Pi^n}_{ts}}_{\cb_{p}} \leq c_{\mu,\ga'} \lln t-s \rrn^{\tilde{\mu}} \cn[\delha (J^y-J^{y^n});\cac_3^{\tilde{\mu}}(\llbracket s,t\rrbracket_n;\cb_{p})].$$
Then it suffices to observe that
$$\norm{R^{y,\Pi^n}_{ts}}_{\cb_p} \leq \frac{c_y}{(2^n)^{\mu-1}} \lcl \lln t-t^n_{l-1} \rrn+\sum_{i=k}^{l-2} \lln t^n_{i+1}-t^n_i \rrn \rcl \leq \frac{c_y \lln t-s\rrn}{(2^n)^{\mu-1}}$$
and
\begin{equation}\label{utilite-sec-cond}
\norm{R^{y,\Pi^n}_{ts}}_{\cb_{\ga',p}} \leq \frac{c_y}{(2^n)^\ep}+\sum_{i=k}^{l-2} \lln t-t_{i+1}^n \rrn^{-\ga'} \frac{c_y}{(2^n)^\mu} \leq c_{y,\ga'} \lcl \frac{1}{(2^n)^\ep}+\frac{1}{(2^n)^{\mu-1}} \rcl.
\end{equation}
\end{proof}

\begin{lemma}\label{lem:uni-2}
Set $\tilde{\mu}:=\inf(\ga+\ga',3\ga)$. Then for all $s<t\in \Pi^n$,
\begin{equation}\label{contr-delha-j-y-j-n-1}
\cn[\delha (J^y-J^{y^n});\cac_3^{\tilde{\mu}}(\llbracket s,t\rrbracket_n;\cb_p)] \leq c_{y,x,f,\psi} \cn[y-y^n;\cq(\llbracket s,t\rrbracket_n )],
\end{equation}
\begin{equation}\label{contr-delha-j-y-j-n-2}
\cn[\delha (J^y-J^{y^n});\cac_3^{\ga}(\llbracket s,t\rrbracket_n;\cb_{\ga',p})] \leq c_{y,x,f,\psi} \cn[y-y^n;\cq(\llbracket s,t\rrbracket_n )].
\end{equation}
\end{lemma}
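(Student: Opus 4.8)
The statement to prove is Lemma \ref{lem:uni-2}, which controls the two generalized H\"older norms of $\delha(J^y-J^{y^n})$ by the aggregate quantity $\cn[y-y^n;\cq(\llbracket s,t\rrbracket_n)]$. The natural starting point is the explicit decomposition (\ref{dec-delha-j-n}) of $(\delha J^z)_{tus}$ into the four pieces $I_{tus},II_{tus},III_{tus},IV_{tus}$, which is valid for any process $z$. The plan is to write $\delha(J^y-J^{y^n})=\sum_{\alpha\in\{I,II,III,IV\}}(\alpha^y-\alpha^{y^n})$ and to estimate each difference separately, exploiting the multiplicative structure $X\cdot(\text{nonlinear functional of }z)$ of every term. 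For the $\cb_p$-estimate (\ref{contr-delha-j-y-j-n-1}) I would use the regularity of the operators $X^{x,i},X^{ax,j},X^{xx,ij}$ from (\ref{regu-x-x})--(\ref{regu-x-x-x}) together with the Lipschitz-type bounds on $f_i,f_i',F_{ij}$ coming from (F)$_3$; for the $\cb_{\ga',p}$-estimate (\ref{contr-delha-j-y-j-n-2}) I would use instead the smoothing components $X^{x,i}\in\cac_2^{\ga-\ka}(\cl(\cb_{\al,p},\cb_{\al+\ka,p}))$ and $X^{xx,ij}\in\cac_2^{2\ga-\ka}(\cl(\cb_{\al,p},\cb_{\al+\ka,p}))$ to gain the extra $\ga'$ derivatives, at the expense of a slightly worse time exponent, which is why the target regularity there drops from $\tilde\mu$ to $\ga$.

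The core of the argument is, term by term, to insert and subtract so that each difference $\alpha^y-\alpha^{y^n}$ is expressed through the three control quantities making up $\cn[y-y^n;\cq]$. For the easiest term $IV=X^{xx,ij}_{tu}\der(F_{ij}(z))_{us}$, the difference reduces to $\der(F_{ij}(y)-F_{ij}(y^n))_{us}$, which by (F)$_3$ and a mean-value expansion is controlled by $\cn[y-y^n;\cacha_1^\ga(\cb_p)]+\cn[y-y^n;\cac_1^0(\cb_{\ga',p})]$ uniformly (here Assumption (A2), the Banach-algebra property available since $p>n$, is what lets me multiply $f_i'(y)$ against $f_j(y)$ and bound products in $\cb_{\ga',p}$). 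The term $I=X^{x,i}_{tu}\big(\int_s^1 dr\,f_i'(z_s+r(\der z)_{us})\cdot K^z_{us}\big)$ is the one where the third control, $\cn[K^y-K^{y^n};\cac_2^{2\ga}(\cb_p)]$, enters: I would split $f_i'(y_s+r(\der y)_{us})\cdot K^y_{us}-f_i'(y^n_s+r(\der y^n)_{us})\cdot K^{y^n}_{us}$ into a piece where the argument of $f_i'$ varies (handled by the $2\ga$-bound on $K^{y^n}$ and a mean-value estimate on $f_i'$ via (F)$_3$) and a piece $f_i'(\cdots)\cdot(K^y-K^{y^n})_{us}$ (handled directly by $\cn[K^y-K^{y^n};\cac_2^{2\ga}(\cb_p)]$). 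Terms $II$ and $III$ are treated analogously, using $X^{ax,j}\in\cac_2^{\ga+\al}(\cl(\cb_{\al,p},\cb_p))$ for $II$ and the H\"older continuity of $f_i'$ against $\der x^j$ for $III$.

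\textbf{The main obstacle} I anticipate is bookkeeping the time exponents so that every contribution carries at least $\lln t-s\rrn^{\tilde\mu}$ (resp.\ $\lln t-s\rrn^{\ga}$) and that all the spatial norms genuinely collapse into the \emph{three} quantities defining $\cn[y-y^n;\cq]$ and nothing stronger. The delicate point is that $K^y$ and $K^{y^n}$ individually are only controlled at order $2\ga$ in $\cb_p$, so in term $I$ I must be careful never to ask for more regularity of $K$ than the $\cq$-norm provides; the extra derivatives needed for the $\cb_{\ga',p}$-estimate must come entirely from the smoothing of $X$, not from the nonlinear factors. Verifying that the parameter constraint $\ga'\in(1-\ga,1/2+\ga)$ together with $p>n$ makes all the resulting exponents nonnegative (so that the bound holds on a possibly small but fixed interval and the constants $c_{y,x,f,\psi}$ absorb the norms of $y$, $\xrgh$ and $\psi$) is the step that requires genuine care rather than routine computation.
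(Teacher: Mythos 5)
Your treatment of the first estimate (\ref{contr-delha-j-y-j-n-1}) is essentially the paper's: decomposition (\ref{dec-delha-j-n}), term-by-term differences, mean-value expansions using (F)$_3$, the embedding $\cb_{\ga',p}\subset \cb_\infty$, and the three components of the $\cq$-norm, with $\cn[K^y-K^{y^n};\cac_2^{2\ga}(\cb_p)]$ entering through the term $I$. That part is fine.

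The gap is in your plan for (\ref{contr-delha-j-y-j-n-2}). You propose to estimate the nonlinear factors in $\cb_p$ and let the smoothing component $X^{x,i}\in\cac_2^{\ga-\ka}(\cl(\cb_{\al,p},\cb_{\al+\ka,p}))$ ``gain the extra $\ga'$ derivatives'', and you insist that these derivatives must come entirely from the smoothing of $X$. This is impossible: the smoothing in (\ref{regu-x-x}) holds only for $\ka\in[0,\ga)$, whereas $\ga'>1-\ga>1/2>\ga$ (recall $\ga<1/2$), so the required gain $\ka=\ga'$ is out of range. This is not merely a limitation of the stated proposition: $X^{x,i}_{ts}$ does not map $\cb_p$ into $\cb_{\ga',p}$ with any bound of this type, since one would face
$\int_s^t \lln t-u\rrn^{\ga}\,\norm{AS_{tu}}_{\cl(\cb_p,\cb_{\ga',p})}\,du \,\sim\, \int_s^t \lln t-u\rrn^{\ga-1-\ga'}\,du$,
which diverges because $\ga-\ga'<0$. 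Moreover, keeping the four-term decomposition (\ref{dec-delha-j-n}) for the $\cb_{\ga',p}$-estimate forces you to control $K^y-K^{y^n}$ in $\cb_{\ga',p}$ (term $I$), which the $\cq$-norm does not provide: it only contains $\cn[K^y-K^{y^n};\cac_2^{2\ga}(\cb_p)]$. The paper resolves both problems at once by switching, for (\ref{contr-delha-j-y-j-n-2}) only, to the other decomposition (\ref{decompo-delha-j-n-2}), which involves solely the compositions $f_i(y)$, $F_{ij}(y)$ and no $K$ at all; it then uses $X^{x,i}$ and $X^{xx,ij}$ merely as bounded operators on $\cb_{\ga',p}$ (exponents $\ga$ and $2\ga$, no smoothing) and estimates the differences of compositions directly in $\cb_{\ga',p}$ via the algebra property (A2), the composition estimates (\ref{contr-nem-1})--(\ref{contr-nem-2}) (this is where (F)$_3$ is needed), the component $\cn[y-y^n;\cac_1^0(\cb_{\ga',p})]$ of the $\cq$-norm, and the uniform bound $\cn_{y^n}\leq c_{x,f,\psi}$ from Section \ref{sec:existence}. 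This also explains the exponent $\ga$ in (\ref{contr-delha-j-y-j-n-2}): it is the H\"older exponent of $X^{x,i}$ acting on $\cb_{\ga',p}$, the nonlinear increments being only sup-bounded in that space, and not a smoothing trade-off as you suggest.
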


\begin{proof}
(\ref{contr-delha-j-y-j-n-1}) is a consequence of the decomposition (\ref{dec-delha-j-n}). Indeed, one has for instance, if $\cn_y:=\cn[y;\cacha_1^\ga([0,1];\cb_p)]+\cn[y;\cac_1^0([0,1];\cb_{\ga',p})]$,
\bean
\lefteqn{\norm{f_i'(y_s+r(\der y)_{us})-f_i'(y_s)-f_i'(y^n_s+r(\der y^n)_{us})+f_i'(y^n_s)}_{\cb_p}}\\
&\leq & \norm{ r\int_0^1 dr' \lc f_i''(y_s+rr'(\der y)_{us})-f_i''(y^n_s+rr' (\der y^n)_{us})\rc (\der y)_{us}}_{\cb_p}\\
& &+\norm{r\int_0^1 dr' f_i''(y^n_s+rr'(\der y^n)_{us}) \der (y-y^n)_{us}}_{\cb_p}\\
&\leq & c_f \cn_y \lln u-s\rrn^\ga \int_0^1 dr' \norm{(y_s+rr'(\der y)_{us})-(y^n_s+rr'(\der y^n)_{us})}_{\cb_\infty}\\
& &+c_f \lln u-s \rrn^\ga \cn[y-y^n;\cq(I)]\\
&\leq & c_{f,\cn_y} \lln u-s \rrn^\ga \cn[y-y^n;\cq(I)],
\eean
where we have used the continuous inclusion $ \cb_{\ga',p}\subset \cb_\infty$. As for (\ref{contr-delha-j-y-j-n-2}), it suffices to observe, with the expression (\ref{decompo-delha-j-n-2}) in mind, that for instance, due to the assumption (A2) and the control (\ref{contr-nem-2}), one has
\begin{eqnarray}
\lefteqn{\norm{X^{x,i}_{tu}(f_i(y_u)-f_i(y^n_u))}_{\cb_{\ga',p}}} \nonumber\\
&\leq & c_x \lln t-s\rrn^{\ga} \norm{f_i(y_u)-f_i(y^n_u)}_{\cb_{\ga',p}} \nonumber\\
&\leq & c_x \lln t-s \rrn^{\ga} \norm{\int_0^1 dr \, f_i'(y_u+r(y^n_u-y_u))(y^n_u-y_u)}_{\cb_{\ga',p}} \nonumber\\
&\leq & c_x \lln t-s \rrn^{\ga} \norm{y^n_u-y_u}_{\cb_{\ga',p}} \norm{\int_0^1 dr \, f_i'(y_u+r(y^n_u-y_u))}_{\cb_{\ga',p}}\nonumber\\
&\leq & c_{x,f,\cn_y,\cn_{y^n}} \lln t-s \rrn^{\ga} \cn[y-y^n;\cq(I)]\nonumber\\
&\leq & c_{x,f,\cn_y,\psi} \lln t-s \rrn^{\ga} \cn[y-y^n;\cq(I)],\label{exemp}
\end{eqnarray}
where, to get the last estimate, we have appealed to the uniform control $\cn_{y^n} \leq c_{x,f,\psi}$ established in the proof of Theorem \ref{theo-exi}.
\end{proof}

\begin{proof}[Proof of Theorem \ref{theo-uni}]
Let $T_1 \leq 1 \in \Pi^n$. Write 
$$\delha(y-y^n)_{ts}=X^{x,i}_{ts} \lc f_i(y_s)-f_i(y^n_s)\rc+X^{xx,ij}_{ts} \lc F_{ij}(y_s)-F_{ij}(y^n_s)\rc+\lc J^y_{ts}-J^{y^n}_{ts}\rc,$$
and use the two previous lemmas to deduce first
$$\cn[y-y^n;\cacha_1^\ga(\llbracket 0,T_1 \rrbracket_n;\cb_p)] \leq c_{y,x,f,\psi} T_1^\ga \cn[y-y^n;\cq(\llbracket 0,T_1\rrbracket_n )]+\frac{c_y}{(2^n)^{\mu-1}}$$
and secondly
\begin{multline*}
\cn[y-y^n;\cac_1^0(\llbracket 0,T_1 \rrbracket_n;\cb_{\ga',p})] \leq c_{y,x,f,\psi} T_1^{\ga} \cn[y-y^n;\cq(\llbracket 0,T_1\rrbracket_n )]\\
+c_{y} \lcl \frac{1}{(2^n)^\ep}+\frac{1}{(2^n)^{\mu-1}} \rcl.
\end{multline*}
Then, since $K^y_{ts}-K^{y^n}_{ts}=X^{xx,ij}_{ts} \lc F_{ij}(y_s)-F_{ij}(y^n_s) \rc +\lc J^y_{ts}-J^{y^n}_{ts}\rc$, one has
$$\cn[K^y-K^{y^n};\cac_2^{2\ga}(\llbracket 0,T_1 \rrbracket_n;\cb_p)] \leq c_{y,x,f,\psi} T_1^\ga \cn[y-y^n;\cq(\llbracket 0,T_1\rrbracket_n )]+\frac{c_y}{(2^n)^{\mu-1}}$$
and we have thus proved that
$$
\cn[y-y^n;\cq(\llbracket 0,T_1 \rrbracket_n)] \leq c^1_{y,x,f,\psi} T_1^{\ga} \cn[y-y^n;\cq(\llbracket 0,T_1\rrbracket_n )]+c^1_{y} \lcl \frac{1}{(2^n)^\ep}+\frac{1}{(2^n)^{\mu-1}} \rcl.
$$
Choose $T_1$ such that $c^1_{y,x,f,\psi} T_1^{\ga}=\frac{1}{2}$ to obtain
$$\cn[y-y^n;\cq(\llbracket 0,T_1\rrbracket_n)] \leq 2c^1_{y} \lcl \frac{1}{(2^n)^\ep}+\frac{1}{(2^n)^{\mu-1}} \rcl.$$
By using the same arguments on $\llbracket kT_1,(k+1)T_1 \rrbracket_n$, we get
$$\cn[y-y^n;\cq(\llbracket kT_1,(k+1)T_1\rrbracket_n)] \leq 2c^1_{y} \lcl \frac{1}{(2^n)^\ep}+\frac{1}{(2^n)^{\mu-1}} \rcl+c_{x,f} \norm{y_{kT_1}-y^n_{kT_1}}_{\cb_{\ga',p}},$$
and it is now easy to establish that
\begin{equation}\label{contr-uni-norm}
\cn[y-y^n;\cacha_1^\ga(\llbracket 0,1\rrbracket_n;\cb_p)]+\cn[y-y^n;\cac_1^0(\llbracket 0,1\rrbracket_n;\cb_{\ga',p})] \leq c_{y,x,f,\psi} \lcl \frac{1}{(2^n)^\ep}+\frac{1}{(2^n)^{\mu-1}} \rcl.
\end{equation}
This inequality clearly proves the uniqueness of the solution and therefore, it enables us to identify $y$ with the solution constructed in Section \ref{sec:existence}. This identification allows in turn to choose $\mu$ and $\ep$ as in Proposition \ref{prop:inter-exi} and to assert that $\cn_y \leq c_{x,f,\psi}$, which completes the proof.

\end{proof}

\section{Continuity of the solution}\label{sec:conti}

It remains to prove Theorem \ref{theo-conti}. In accordance with the statement of this result, we suppose that $p>n$ and that Assumptions (A1), (A2), (X)$_\ga$ and (F)$_3$ are all satisfied. We fix $\ga' \in (1-\ga,\ga+1/2)$ and the two initial conditions $\psi,\tilde{\psi} \in \cb_{\ga',p}$. We denote by $X=(X^x,X^{ax},X^{xx})$ (resp. $\tilde{X}=(\tilde{X}^x,\tilde{X}^{ax},\tilde{X}^{xx})$) the path constructed from $(x,\xrgh^\bt)$ (resp. $(\tilde{x},\tilde{\xrgh}^\bt))$ through Definition \ref{defi-chemin}. With this notation, we define $y^n$ as the path described by the scheme (\ref{schema}) and $\yti^n$ as the path obtained by replacing $(\psi,X^x,X^{xx})$ with $(\tilde{\psi},\tilde{X}^x,\tilde{X}^{xx})$ in the latter scheme.

\smallskip

\noindent
Besides, we define $\Jti$ and $\Kti$ by replacing $(X^x,X^{xx})$ with $(\Xti^x,\Xti^{xx})$ in Formulas (\ref{defi-j}) and (\ref{defi-k}). For the sake of clarity, we also set $J^n:=J^{y^n}$, $K^n:=K^{y^n}$, $\Jti^n:=\Jti^{y^n}$, $\Kti^n=\Kti^{\yti^n}$, and as in the previous section, we introduce the intermediate quantity
\begin{multline*}
\cn[y^n-\yti^n;\tilde{Q}(\llbracket s,t\rrbracket_n)]\\
:=\cn[y^n-\yti^n;\cacha_1^\ga(\llbracket s,t \rrbracket_n;\cb_p)]+\cn[y^n-\yti^n;\cac_1^0(\llbracket s,t \rrbracket_n;\cb_{\ga',p})]+\cn[K^n-\Kti^n;\cac_2^{2\ga}(\llbracket s,t\rrbracket_n;\cb_p)].
\end{multline*}

Remember that owing to the results of Section \ref{sec:existence}, we can rely on the uniform control
$$\cn[y^n;\cacha_1^\ga(\llbracket 0,1 \rrbracket_n;\cb_p)]+\cn[y^n;\cac_1^0(\llbracket 0,1 \rrbracket_n;\cb_{\ga',p})]+\cn[K^n;\cac_2^{2\ga}(\llbracket 0,1\rrbracket_n;\cb_p)]\leq c_{x,\psi},$$
with an equivalent result for $\yti^n$. The proof of Theorem \ref{theo-conti} now leans on the two following lemmas:

\begin{lemma}\label{lem:base-conti}
For all $\tilde{\mu} >1$ and $\ka >0$, there exists a constant $c=c_{\tilde{\mu},\ka}$ such that if $s<t\in \Pi^n$,
\begin{multline*}
\norm{J^n_{ts}-\Jti^n_{ts}}_{\cb_{\ga',p}} \leq c \lcl \lln t-s \rrn^\ka+\lln t-s \rrn^{\tilde{\mu}-\ga'} \rcl\\
\lcl \cn[\delha (J^n-\Jti^n);\cac_3^\ka(\llbracket s,t\rrbracket_n;\cb_{\ga',p})]+\cn[\delha (J^n-\Jti^n);\cac_3^{\tilde{\mu}}(\llbracket s,t\rrbracket_n;\cb_{p})] \rcl
\end{multline*} 
and
$$\norm{J^n_{ts}-\Jti^n_{ts}}_{\cb_{p}} \leq c \lln t-s \rrn^{\tilde{\mu}} \cn[\delha (J^n-\Jti^n);\cac_3^{\tilde{\mu}}(\llbracket s,t\rrbracket_n;\cb_{p})].$$
\end{lemma}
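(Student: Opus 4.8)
The plan is to recognise that this lemma is nothing but the continuity counterpart of the ``base'' estimate of Lemma~\ref{lemme:base}, now carried out for the difference process $Z:=J^n-\Jti^n$ in place of $J^n$. The whole argument rests on a single algebraic fact, which I would check first: that $Z$ vanishes on each dyadic step, i.e.\ $(J^n-\Jti^n)_{t_{k+1}^nt_k^n}=0$ for every $k$. Since $y^n$ is generated by the scheme~(\ref{schema}) driven by $(X^x,X^{xx})$, one has $J^n_{t_{k+1}^nt_k^n}=0$ exactly as observed in Section~\ref{sec:existence}; by symmetry, $\yti^n$ obeys the same scheme with $(X^x,X^{xx})$ replaced by $(\Xti^x,\Xti^{xx})$, so the definition of $\Jti$ forces $\Jti^n_{t_{k+1}^nt_k^n}=0$ as well. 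Hence $Z_{t_{k+1}^nt_k^n}=0$, and Remark~\ref{rk:partitions} gives $Z^{n,\Pi^n}_{ts}=Z_{ts}$. This is precisely why the statement carries \emph{no} remainder term, in contrast with the uniqueness version Lemma~\ref{lem:base-uni}, where $J^y$ was not a discrete object and produced the residual $R^{y,\Pi^n}$.

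I would then run the algorithm of Appendix~A on a generic uniform partition $\{t_p^n,\dots,t_q^n\}$ with $s=t_p^n$, $t=t_q^n$, exactly as in the proof of Lemma~\ref{lemme:base}. Denoting by $(\Pi^{n,m})_m$ the decreasing sequence of partitions output by the algorithm and setting $Z^{n,m}:=Z^{n,\Pi^{n,m}}$, the telescoping identity yields $Z_{ts}=\sum_{m=0}^{N-1}[Z^{n,m}-Z^{n,m+1}]$, while Remark~\ref{rk:partitions} rewrites each increment $Z^{n,m}-Z^{n,m+1}$ as a sum of terms $S_{t\tau}(\delha Z)_{\tau'\tau''\tau'''}$ supported on consecutive triples of partition points. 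Since $\delha Z=\delha(J^n-\Jti^n)$, every such term is controlled by the two seminorms appearing on the right-hand side, namely $\cn[\delha(J^n-\Jti^n);\cac_3^\ka(\llbracket s,t\rrbracket_n;\cb_{\ga',p})]$ and $\cn[\delha(J^n-\Jti^n);\cac_3^{\tilde{\mu}}(\llbracket s,t\rrbracket_n;\cb_p)]$, upon invoking the smoothing bound $\norm{S_{t\tau}\vp}_{\cb_{\ga',p}}\leq c\lln t-\tau\rrn^{-\ga'}\norm{\vp}_{\cb_p}$ recalled in Subsection~\ref{subsec:infinite-rough-path} to trade a power of time for the passage from $\cb_p$ to $\cb_{\ga',p}$.

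It then remains to collect the combinatorial sums. Summing over the algorithm index $m$ and over the blocks indexed by $r$, the factors $\lln 1-k^-_{A_{r-1}+1}/N\rrn^{\ka}$ and $N^{-\tilde{\mu}}\sum_m\lln 1-k_m^+/N\rrn^{-\ga'}\lln k_m^+-k_m^-\rrn^{\tilde{\mu}}$ are bounded by a finite constant $c_{\ka,\tilde{\mu},\ga'}$ by Proposition~\ref{resu-algo}, once the prefactor $\lln t-s\rrn^\ka+\lln t-s\rrn^{\tilde{\mu}-\ga'}$ has been extracted; this gives the first inequality. For the $\cb_p$ estimate one simply drops the smoothing step, uses the uniform summability furnished by~(\ref{contr-unif}), and factors out $\lln t-s\rrn^{\tilde{\mu}}$, which yields the second inequality. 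I expect the only genuinely delicate point to be the vanishing of $Z$ on single steps, and thereby the absence of a remainder term; once this is secured, the two estimates are a verbatim transcription of the proof of Lemma~\ref{lemme:base} with $J^n$ replaced by $J^n-\Jti^n$.
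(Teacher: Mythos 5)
Your proof is correct and follows exactly the route the paper intends: its proof of this lemma is literally ``follow the lines of the proof of Lemma~\ref{lemme:base}'', which is what you do, applied to $Z=J^n-\Jti^n$. You also make explicit the one point the paper leaves implicit --- that both $J^n$ and $\Jti^n$ vanish on single dyadic steps (reading $\Jti^n=\Jti^{\yti^n}$, as the schemes require), so that no remainder term of the type $R^{y,\Pi^n}$ from Lemma~\ref{lem:base-uni} appears --- which is precisely why the statement has the clean form it does.
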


\begin{proof}
It suffices to follow the lines of the proof of Lemma \ref{lemme:base}.
\end{proof}

\begin{lemma}
Set $\tilde{\mu}:=\inf(\ga+\ga',3\ga)$. Then for all $s<t\in \Pi^n$,
\begin{equation}\label{lem:conti-2-1}
\cn[\delha(J^n-\Jti^n);\cac_3^{\tilde{\mu}}(\llbracket s,t\rrbracket_n;\cb_p)] \leq c_{x,\xti,\psi,\tilde{\psi}} \lcl \cn[y^n-\yti^n;\tilde{Q}(\llbracket s,t\rrbracket_n)]+ \norm{\xrgh-\tilde{\xrgh}}_\ga \rcl
\end{equation}
and
\begin{equation}\label{lem:conti-2-2}
\cn[\delha(J^n-\Jti^n);\cac_3^{\ga}(\llbracket s,t\rrbracket_n;\cb_{\ga',p})] \leq c_{x,\xti,\psi,\tilde{\psi}} \lcl \cn[y^n-\yti^n;\tilde{Q}(\llbracket s,t\rrbracket_n)]+ \norm{\xrgh-\tilde{\xrgh}}_\ga \rcl.
\end{equation}
\end{lemma}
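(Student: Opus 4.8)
The plan is to mimic the proof of Lemma \ref{lem:uni-2}, but now keeping track of two distinct sources of discrepancy: the difference between the processes $y^n$ and $\yti^n$ (measured by $\cn[y^n-\yti^n;\tilde{Q}(\llbracket s,t\rrbracket_n)]$) and the difference between the driving paths $X$ and $\tilde{X}$ (measured by $\norm{\xrgh-\tilde{\xrgh}}_\ga$, through the continuity estimate $\norm{X-\tilde{X}}_{\ga,\al,\ka}\leq c\lcl 1+\norm{\xrgh}_\ga+\norm{\tilde{\xrgh}}_\ga\rcl\norm{\xrgh-\tilde{\xrgh}}_\ga$ established in Subsection \ref{subsec:infinite-rough-path}). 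The guiding principle throughout is the elementary telescoping identity $A\,b-\tilde{A}\,\tilde{b}=A(b-\tilde{b})+(A-\tilde{A})\tilde{b}$, which splits every product appearing in the expansions of $\delha J^n$ and $\delha\Jti^n$ into a \emph{process-difference} term and a \emph{path-difference} term.

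First I would establish (\ref{lem:conti-2-1}). Starting from the decomposition (\ref{dec-delha-j-n}) applied to $y^n$ with path $X$ and to $\yti^n$ with path $\tilde{X}$, I write $(\delha J^n-\delha\Jti^n)_{tus}=(I-\tilde{I})+(II-\tilde{II})+(III-\tilde{III})+(IV-\tilde{IV})$ and estimate each difference in $\cb_p$. Applying the telescoping identity term by term, the factors carrying $X^{x,i}$, $X^{xx,ij}$, $a$ and the various $f_i,f_i',f_i'',F_{ij}$ evaluated along $y^n$ are retained, while the discrepancy is transferred either onto a factor of the form $X-\tilde{X}$ (bounded by $\norm{\xrgh-\tilde{\xrgh}}_\ga$) or onto a difference of the type $f_i'(y^n_s+r(\der y^n)_{us})-f_i'(\yti^n_s+r(\der\yti^n)_{us})$ or $K^n-\Kti^n$ (bounded by $\cn[y^n-\yti^n;\tilde{Q}]$, exactly as in Lemma \ref{lem:uni-2}). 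The remaining factors are kept under control by the uniform estimates on $y^n$ and $\yti^n$ recalled before the statement, by the regularity (F)$_3$ (which furnishes the Lipschitz bound on $f_i''$ needed to compare the two integrands), and by the embedding $\cb_{\ga',p}\subset\cb_\infty$ of Assumption (A2). Each resulting term carries a factor $\lln t-s\rrn^{3\ga}$ or $\lln t-s\rrn^{\ga+\ga'}$, which yields the exponent $\tilde{\mu}=\inf(\ga+\ga',3\ga)$.

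Next I would prove (\ref{lem:conti-2-2}) from the decomposition (\ref{decompo-delha-j-n-2}), now measuring everything in the $\cb_{\ga',p}$-norm. The products $X^{x,i}_{tu}(f_i(y^n_u)-f_i(\yti^n_u))$, $X^{x,i}_{tu}(\der x^j)_{us}F_{ij}(y^n_s)-\Xti^{x,i}_{tu}(\der\xti^j)_{us}F_{ij}(\yti^n_s)$ and $X^{xx,ij}_{tu}\der(F_{ij}(y^n))_{us}-\Xti^{xx,ij}_{tu}\der(F_{ij}(\yti^n))_{us}$ are again split by telescoping. The estimate on the first of these mirrors (\ref{exemp}): the Banach algebra property of $\cb_{\ga',p}$ (Assumption (A2)) controls the products in $\cb_{\ga',p}$, the mean-value representation $f_i(y^n_u)-f_i(\yti^n_u)=\int_0^1 dr\,f_i'(\yti^n_u+r(y^n_u-\yti^n_u))(y^n_u-\yti^n_u)$ extracts the factor $\norm{y^n_u-\yti^n_u}_{\cb_{\ga',p}}$, and the residual $\cb_{\ga',p}$-norm of the integrand stays bounded thanks to the uniform controls and (F)$_3$. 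The path-difference pieces are absorbed by the $\cac_2^{\ga}(\cl(\cb_{\al,p},\cb_{\al,p}))$ and $\cac_2^{2\ga}(\cl(\cb_{\al,p},\cb_{\al,p}))$ components of $\norm{X-\tilde{X}}_{\ga,\al,\ka}$, applied with $\al=\ga'$.

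The main obstacle I anticipate is the control, in the $\cb_{\ga',p}$-norm, of the quadratic increment $\der(F_{ij}(y^n))_{us}-\der(F_{ij}(\yti^n))_{us}$ appearing in the last term of (\ref{decompo-delha-j-n-2}): since $F_{ij}=f_i'\cdot f_j$ is a product and the norm is the strong Sobolev-type norm $\cb_{\ga',p}$, the factors cannot be separated naively, and the Banach algebra structure must be combined with the full quantity $\cn[y^n-\yti^n;\tilde{Q}]$ — in particular with both its $\cac_1^0(\cb_{\ga',p})$ and its $\cacha_1^\ga(\cb_p)$ components — in order to dominate the increment of the difference. This is precisely where the non-uniform Nemytskii estimate recalled in the introduction prevents an unconditional Lipschitz bound and forces one to track the dependence on the (uniformly bounded) norms of $y^n$ and $\yti^n$.
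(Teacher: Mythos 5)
Your proposal is correct and follows essentially the same route as the paper's own proof: the paper likewise invokes the decomposition (\ref{dec-delha-j-n}) for (\ref{lem:conti-2-1}) and (\ref{decompo-delha-j-n-2}) for (\ref{lem:conti-2-2}), splits each product by the same telescoping identity into a path-difference piece (controlled via $\norm{X-\tilde{X}}_{\ga,\al,\ka}\leq c\lcl 1+\norm{\xrgh}_\ga+\norm{\tilde{\xrgh}}_\ga\rcl\norm{\xrgh-\tilde{\xrgh}}_\ga$) and a process-difference piece (controlled by $\cn[y^n-\yti^n;\tilde{Q}]$ exactly as in Lemma \ref{lem:uni-2}), using the uniform bounds on $y^n,\yti^n$ and the inclusion $\cb_{\ga',p}\subset\cb_\infty$. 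The only difference is one of exposition: the paper works out a single representative term and declares the rest analogous, whereas you sketch all terms and flag the $\der(F_{ij}(\cdot))$ increment in $\cb_{\ga',p}$, which the paper silently subsumes under "the same kind of estimates as in (\ref{exemp})".
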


\begin{proof}
This is the same type of arguments as in the proof of Lemma \ref{lem:uni-2}. For (\ref{lem:conti-2-1}), resort to the decomposition (\ref{dec-delha-j-n}) and notice for instance that
\bean
\lefteqn{\norm{X^{x,i}_{tu} \lp \int_0^1 dr \, f_i'(y^n_s+r(\der y^n)_{us}) \cdot K^n_{us} \rp -\Xti^{x,i}_{tu} \lp \int_0^1 dr \, f_i'(\yti^n_s+r(\der \yti^n)_{us}) \cdot \Kti^n_{us} \rp}_{\cb_p}}\\
&\leq & c\,  \norm{X^{x,i}_{tu}-\Xti^{x,i}_{tu}}_{\cl(\cb_p,\cb_p)} \norm{K^n_{us}}_{\cb_p}+\norm{\Xti^{x,i}_{tu}}_{\cl(\cb_p,\cb_p)} \\
& & \norm{\int_0^1 dr \, f_i'(y^n_s+r(\der y^n)_{us}) \cdot K^n_{us}-\int_0^1 dr \, f_i'(\yti^n_s+r(\der \yti^n)_{us}) \cdot \Kti^n_{us}}_{\cb_p}\\
&\leq & c_{x,\xti,\psi} \lln t-s \rrn^{3\ga}\norm{\xrgh-\tilde{\xrgh}}_\ga+c_{\xti} \lln t-u \rrn^\ga\\
& & \bigg\{ \norm{\int_0^1 dr \, \lc f_i'(y^n_s+r(\der y^n)_{us})-f_i'(\yti^n_s+r(\der \yti^n)_{us})\rc \cdot K^n_{us} }_{\cb_p} \\
& & +\norm{\int_0^1 dr \, f_i'(\yti^n_s+r(\der \yti^n)_{us}) \cdot \lc K^n_{us}-\Kti^n_{us} \rc }_{\cb_p} \bigg\}\\
&\leq & c^1_{x,\xti,\psi} \lln t-s \rrn^{3\ga} \norm{\xrgh-\tilde{\xrgh}}_\ga+c^2_{x,\xti,\psi} \lln t-s \rrn^{3\ga} \cn[y^n-\yti^n;\tilde{Q}(\llbracket s,t\rrbracket_n)],
\eean
where we have used the continuous inclusion $\cb_{\ga',p} \subset \cb_\infty$. (\ref{lem:conti-2-2}) can be proved likewise, with the same kind of estimates as in the proof of (\ref{exemp}).

\end{proof}

\begin{proof}[Proof of Theorem \ref{theo-conti}]
By following the same procedure as in the proof of Theorem \ref{theo-uni}, we first deduce
\begin{multline*}
\cn[y^n-\yti^n;\tilde{Q}(\llbracket 0,T_2\rrbracket_n)] \\
\leq c^1_{x,\xti,\psi,\tilde{\psi}} \lcl T_2^\ga \cn[y^n-\yti^n;\tilde{Q}(\llbracket 0,T_2 \rrbracket_n)] +\norm{\psi-\tilde{\psi}}_{\cb_{\ga',p}}+\norm{\xrgh-\tilde{\xrgh}}_\ga \rcl.
\end{multline*}
Indeed, one has for instance, if $0\leq s<t\leq T_2$,
\bean
\norm{X^{x,i}_{ts} \lc f_i(y^n_s)-f_i(\yti^n_s)\rc }_{\cb_p} &\leq & c_x \lln t-s \rrn^\ga \norm{y^n_s-\yti^n_s}_{\cb_p}\\
&\leq & c_x \lln t-s \rrn^\ga \lcl \norm{\delha (y^n-\yti^n)_{s0}}_{\cb_p}+\norm{\psi-\tilde{\psi}}_{\cb_{\ga',p}} \rcl\\
&\leq & c_x \lln t-s \rrn^\ga \lcl T_2^\ga \cn[y^n-\yti^n;\tilde{Q}(\llbracket 0,T_2 \rrbracket_n)]+\norm{\psi-\tilde{\psi}}_{\cb_{\ga',p}} \rcl.
\eean
Then we take $T_2$ such that $c^1_{x,\xti,\psi,\tilde{\psi}}  T_2^\ga=\frac{1}{2}$ so as to retrieve
$$\cn[y^n-\yti^n;\tilde{Q}(\llbracket 0,T_2\rrbracket_n)] 
\leq 2 \, c^1_{x,\xti,\psi,\tilde{\psi}} \lcl  \norm{\psi-\tilde{\psi}}_{\cb_{\ga',p}}+\norm{\xrgh-\tilde{\xrgh}}_\ga \rcl.$$
Repeating the procedure on $[T_2,2T_2]$, $[2T_2,3T_2]$,..., leads to the uniform control
\begin{multline}\label{ito-discret}
\cn[y^n-\yti^n;\cacha_1^\ga(\llbracket 0,1\rrbracket_n;\cb_p)]+\cn[y^n-\yti^n;\cac_1^0(\llbracket 0,1\rrbracket_n;\cb_{\ga',p})]\\
\leq c_{x,\xti,\psi,\tilde{\psi}} \lcl \norm{\psi-\tilde{\psi}}_{\cb_{\ga',p}}+\norm{\xrgh-\tilde{\xrgh}}_\ga \rcl.
\end{multline}
To conclude with, let us introduce, for all $s<t\in [0,1]$, two sequences $s_n<t_n \in \Pi^n$ such that $s_n$ decreases to $s$ and $t_n$ increases to $t$, and write (for instance) successively
$$\norm{\delha(y-\yti)_{ts}}_{\cb_p} \leq \norm{\delha(y-\yti)_{tt_n}}_{\cb_p}+\norm{\delha(y-\yti)_{t_ns_n}}_{\cb_p}+\norm{\delha(y-\yti)_{s_ns}}_{\cb_p},$$
$$\norm{\delha(y-\yti)_{t_ns_n}}_{\cb_p} \leq \norm{\delha(y-y^n)_{t_ns_n}}_{\cb_p}+\norm{\delha(y^n-\yti^n)_{t_ns_n}}_{\cb_p}+\norm{\delha(\yti-\yti^n)_{t_ns_n}}_{\cb_p}.$$
The control (\ref{ito-discret}), together with the approximation result (\ref{contr-uni-norm}), then provides (\ref{ito}).

\end{proof}

\section{Appendix A: a useful algorithm}\label{sec:algo}

We give here the description and an analysis of the algorithm used in the proofs of Lemmas \ref{lemme:base}, \ref{lem:base-uni} and \ref{lem:base-conti}.

\smallskip

\noindent
Consider a generic partition $\{ 0,1,2,\ldots,N\}$. We remove the inner points of this partition ($\{ 1,2,\ldots,N-1\}$) one by one according to the following procedure (see Figure 1):
\begin{list}{\labelitemi}{\leftmargin=1em\itemsep=1em}
\item At step 1, we successively remove, from the right to the left, every two points, starting from $N$ (excluded) until $0$ (also excluded). Then, still at step 1, we take off the point of the (updated) partition between $0$ (excluded) and the last removed point, if such a middle point exists.
\item We repeat the procedure with the remaining points (steps 2,3,...) until the partition is empty.
\end{list}

\

\noindent
We denote by:
\begin{itemize}
\item $M$ the number of steps necessary to empty the partition.
\item $(k_m)_{m\in \{1,\ldots,N-1\}}$ the sequence of successively removed points.
\item $k_m^+$ the point of the partition (at 'time' $m$ of the algorithm) that follows $k_m$ (when reading from the left to the right), $k_m^-$ the point that precedes it.
\item $A_r$ the total number of points that have been taken off at the end of step $r$. We also set $A_0:=0$.
\end{itemize}

\

\

\begin{center}
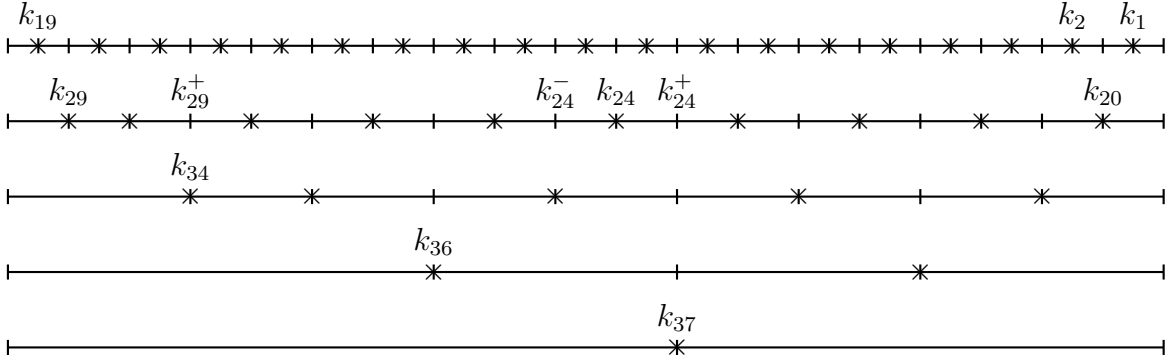
\begin{figure}[!ht]\label{schema-algo-points}
\begin{pspicture}(0,0)(15.2,6)
\psline(0,5)(15.2,5)
\psline(0,4.9)(0,5.1)  \psline(0.4,4.9)(0.4,5.1)  \psline(0.8,4.9)(0.8,5.1)  \psline(1.2,4.9)(1.2,5.1)   \psline(1.6,4.9)(1.6,5.1)  \psline(2,4.9)(2,5.1)  \psline(2.4,4.9)(2.4,5.1)  \psline(2.8,4.9)(2.8,5.1)  \psline(3.2,4.9)(3.2,5.1)  \psline(3.6,4.9)(3.6,5.1)  \psline(4,4.9)(4,5.1)  \psline(4.4,4.9)(4.4,5.1)  \psline(4.8,4.9)(4.8,5.1)  \psline(5.2,4.9)(5.2,5.1)  \psline(5.6,4.9)(5.6,5.1)  \psline(6,4.9)(6,5.1)  \psline(6.4,4.9)(6.4,5.1)  \psline(6.8,4.9)(6.8,5.1)  \psline(7.2,4.9)(7.2,5.1)  \psline(7.6,4.9)(7.6,5.1)  \psline(8,4.9)(8,5.1) \psline(8.4,4.9)(8.4,5.1) \psline(8.8,4.9)(8.8,5.1) \psline(9.2,4.9)(9.2,5.1) \psline(9.6,4.9)(9.6,5.1) \psline(10,4.9)(10,5.1) \psline(10.4,4.9)(10.4,5.1) \psline(10.8,4.9)(10.8,5.1) \psline(11.2,4.9)(11.2,5.1) \psline(11.6,4.9)(11.6,5.1) \psline(12,4.9)(12,5.1) \psline(12.4,4.9)(12.4,5.1) \psline(12.8,4.9)(12.8,5.1) \psline(13.2,4.9)(13.2,5.1) \psline(13.6,4.9)(13.6,5.1) \psline(14,4.9)(14,5.1) \psline(14.4,4.9)(14.4,5.1) \psline(14.8,4.9)(14.8,5.1) \psline(15.2,4.9)(15.2,5.1) 
\rput(14.8,5){$\times$}\rput(14,5){$\times$}\rput(13.2,5){$\times$}\rput(12.4,5){$\times$}\rput(11.6,5){$\times$}\rput(10.8,5){$\times$}\rput(10,5){$\times$}\rput(9.2,5){$\times$}\rput(8.4,5){$\times$}\rput(7.6,5){$\times$}\rput(6.8,5){$\times$}\rput(6,5){$\times$}\rput(5.2,5){$\times$}\rput(4.4,5){$\times$}\rput(3.6,5){$\times$}\rput(2.8,5){$\times$}\rput(2,5){$\times$}\rput(1.2,5){$\times$}\rput(0.4,5){$\times$}
\rput(14.8,5.4){$k_1$}\rput(14,5.4){$k_2$}\rput(0.4,5.4){$k_{19}$}

\psline(0,4)(15.2,4)
\psline(0,3.9)(0,4.1)    \psline(0.8,3.9)(0.8,4.1)     \psline(1.6,3.9)(1.6,4.1)   \psline(2.4,3.9)(2.4,4.1)    \psline(3.2,3.9)(3.2,4.1)    \psline(4,3.9)(4,4.1)   \psline(4.8,3.9)(4.8,4.1)    \psline(5.6,3.9)(5.6,4.1)    \psline(6.4,3.9)(6.4,4.1)   \psline(7.2,3.9)(7.2,4.1)   \psline(8,3.9)(8,4.1)  \psline(8.8,3.9)(8.8,4.1)  \psline(9.6,3.9)(9.6,4.1)  \psline(10.4,3.9)(10.4,4.1)  \psline(11.2,3.9)(11.2,4.1)  \psline(12,3.9)(12,4.1)  \psline(12.8,3.9)(12.8,4.1)  \psline(13.6,3.9)(13.6,4.1)  \psline(14.4,3.9)(14.4,4.1)  \psline(15.2,3.9)(15.2,4.1) 
\rput(14.4,4){$\times$}\rput(12.8,4){$\times$}\rput(11.2,4){$\times$}\rput(9.6,4){$\times$}\rput(8,4){$\times$}\rput(6.4,4){$\times$}\rput(4.8,4){$\times$}\rput(3.2,4){$\times$}\rput(1.6,4){$\times$}\rput(0.8,4){$\times$}
\rput(14.4,4.4){$k_{20}$}\rput(0.8,4.4){$k_{29}$}\rput(8,4.4){$k_{24}$}\rput(8.8,4.4){$k_{24}^+$}\rput(7.2,4.4){$k_{24}^-$}\rput(2.4,4.4){$k_{29}^+$}

\psline(0,3)(15.2,3)
\psline(0,2.9)(0,3.1)           \psline(2.4,2.9)(2.4,3.1)       \psline(4,2.9)(4,3.1)      \psline(5.6,2.9)(5.6,3.1)       \psline(7.2,2.9)(7.2,3.1)     \psline(8.8,2.9)(8.8,3.1)   \psline(10.4,2.9)(10.4,3.1)    \psline(12,2.9)(12,3.1)    \psline(13.6,2.9)(13.6,3.1)    \psline(15.2,2.9)(15.2,3.1) 
\rput(2.4,3){$\times$}\rput(4,3){$\times$}\rput(7.2,3){$\times$}\rput(10.4,3){$\times$}\rput(13.6,3){$\times$}
\rput(2.4,3.4){$k_{34}$}

\psline(0,2)(15.2,2)
\psline(0,1.9)(0,2.1)                \psline(5.6,1.9)(5.6,2.1)        \psline(8.8,1.9)(8.8,2.1)     \psline(12,1.9)(12,2.1)       \psline(15.2,1.9)(15.2,2.1) 
\rput(5.6,2){$\times$} \rput(12,2){$\times$}
\rput(5.6,2.4){$k_{36}$}

\psline(0,1)(15.2,1)
\psline(0,0.9)(0,1.1)                       \psline(8.8,0.9)(8.8,1.1)           \psline(15.2,0.9)(15.2,1.1) 
\rput(8.8,1){$\times$}
\rput(8.8,1.4){$k_{37}$}
\end{pspicture}
\caption{The algorithm for $N=38$. Each line corresponds to one step. Thus, $M=5$, $A_1=19$, $A_2=29$, $A_3=34$, $A_4=36$.}
\end{figure}
\end{center}

\

\begin{lemma}\label{lem:prel-a-r}
For every $r\in \{0,1,\ldots,M\}$, 
$$0\leq A_r-N\lp 1-\frac{1}{2^r}\rp \leq 1.$$
In particular, $\lln A_r-A_{r-1}-\frac{N}{2^r}\rrn \leq 1$ and $2^{M-1}\leq N \leq 2^{M+1}$.
\end{lemma}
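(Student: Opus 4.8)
The plan is to recast the statement in terms of the number of \emph{surviving} points rather than the number of removed ones. I would write $n_r$ for the cardinality of the partition that remains at the end of step $r$, the two endpoints $0$ and $N$ included; then $n_r = N+1-A_r$ and $n_0 = N+1$. Substituting this into the quantity to be bounded gives $A_r - N(1-2^{-r}) = 1 + N\,2^{-r} - n_r$, so the desired inequality $0 \le A_r - N(1-2^{-r}) \le 1$ is exactly equivalent to the two-sided bound
\[
N\,2^{-r} \le n_r \le N\,2^{-r}+1 .
\]
It therefore suffices to control the integers $n_r$.

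The crux of the argument, and the step I expect to be the main obstacle, is the one-step recursion $n_r = \lceil n_{r-1}/2\rceil$, which requires a careful reading of the algorithm. Consider a step applied to a partition $q_0=0<q_1<\cdots<q_{n-1}=N$ of $n$ points. Sweeping leftwards from just left of $q_{n-1}=N$, the step removes $q_{n-2},q_{n-4},\dots$, i.e.\ every point whose index has the same parity as $n-2$. If $n$ is odd these are the odd-indexed points $q_{n-2},\dots,q_1$; the leftmost removed point is $q_1$, no point lies strictly between $q_0$ and $q_1$, and the $(n+1)/2$ survivors are $q_0,q_2,\dots,q_{n-1}$. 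If $n$ is even the sweep removes the even-indexed points $q_{n-2},\dots,q_2$, the leftmost removed point is $q_2$, and the unique point $q_1$ between $q_0$ and $q_2$ is precisely the extra middle point deleted by the final clause of the step; one is then left with the $n/2$ survivors $q_0,q_3,q_5,\dots,q_{n-1}$. In both cases the count is replaced by $\lceil n/2\rceil$, establishing $n_r=\lceil n_{r-1}/2\rceil$. The delicate point is exactly that the parity-dependent ``extra'' removal near $0$ is what forces the clean halving in the even case.

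The two-sided bound on $n_r$ then follows by induction on $r$. The base case $r=0$ is immediate since $n_0=N+1=N\,2^{0}+1$. Assuming $N\,2^{-r}\le n_r\le N\,2^{-r}+1$, the lower bound propagates through $n_{r+1}=\lceil n_r/2\rceil\ge n_r/2\ge N\,2^{-(r+1)}$, while for the upper bound I would use that $n_r$ is an integer, so $\lceil n_r/2\rceil\le n_r/2+\tfrac12\le \tfrac12(N\,2^{-r}+1)+\tfrac12 = N\,2^{-(r+1)}+1$. This closes the induction, and by the equivalence of the first paragraph it proves $0\le A_r-N(1-2^{-r})\le 1$ for all $r\in\{0,\dots,M\}$.

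Finally, the two auxiliary estimates are read off directly. Subtracting the bounds at levels $r$ and $r-1$ yields
\[
A_r-A_{r-1}-N\,2^{-r}=\big[A_r-N(1-2^{-r})\big]-\big[A_{r-1}-N(1-2^{-(r-1)})\big],
\]
a difference of two quantities each lying in $[0,1]$, whence $|A_r-A_{r-1}-N\,2^{-r}|\le 1$. For the last assertion, step $M$ is by definition the step after which no inner point remains, so $n_M=2$; inserting $r=M$ and $n_M=2$ into $N\,2^{-M}\le n_M\le N\,2^{-M}+1$ gives $2^{M}\le N\le 2^{M+1}$, and in particular $2^{M-1}\le N\le 2^{M+1}$.
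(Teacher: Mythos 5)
Your proof is correct, and it follows essentially the same route as the paper: the paper's one-line argument rests on the recursion $A_{r+1}=A_r+\lfloor (N-A_r+1)/2\rfloor$ followed by induction on $r$, which is exactly your recursion $n_r=\lceil n_{r-1}/2\rceil$ rewritten via the change of variable $n_r=N+1-A_r$. Your write-up simply makes explicit the parity analysis of the sweep and the induction that the paper leaves to the reader (and, as a small bonus, yields the slightly sharper lower bound $2^{M}\leq N$).
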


\begin{proof}
This stems from a straightforward iteration procedure based on the formula $A_{r+1}=A_r+\left\lfloor \frac{N-A_r+1}{2}\right\rfloor$, $r\in \{0,1,\ldots,M-1\}$, where $\left\lfloor .\right\rfloor$ stands for the integer part. 
\end{proof}

\begin{proposition}\label{resu-algo}
Suppose that $\mu >1$, $0<\ga'<1$ and $\ka >0$. Then
\begin{equation}\label{contr-unif}
\sum_{r=1}^{M-1} \lcl \lln 1-\frac{k_{A_{r-1}+1}^-}{N} \rrn^{\ka}+\frac{1}{N^\mu}\sum_{m=A_{r-1}+2}^{A_{r}} \lln 1-\frac{k_m^+}{N}\rrn^{-\ga'} \lln k_m^+-k_m^- \rrn^\mu \rcl \leq c_{\ka,\mu,\ga'},
\end{equation}
for some finite constant $c_{\ka,\mu,\ga'}$ independent of $N$.

\end{proposition}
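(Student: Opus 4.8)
The plan is to estimate the two families of terms in (\ref{contr-unif}) separately, exploiting the geometric structure of the algorithm quantified by Lemma \ref{lem:prel-a-r}. The key quantitative facts are that after step $r$ the surviving points are (roughly) spaced $2^r$ apart, that $A_r \approx N(1-2^{-r})$, and that the removed point $k_m$ at time $m$ of step $r$ has neighbours at distance $k_m^+-k_m^- \approx 2^r$. I would first record, from $A_{r+1}=A_r+\lfloor (N-A_r+1)/2\rfloor$ and the bound $0\le A_r-N(1-2^{-r})\le 1$, the companion estimates I actually need: for an index $m$ removed during step $r$ (i.e. $A_{r-1}+1\le m\le A_r$) one has $k_m^+-k_m^-\le c\,2^r$ and, crucially, the surviving point $k_m^+$ sits among only about $N/2^{r-1}$ points so that $1-k_m^+/N$ ranges over a grid of mesh $\approx 2^r/N$. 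These are the geometric inputs that convert the abstract sums into Riemann sums.

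\textbf{First sum.} For the term $\sum_{r=1}^{M-1}\lln 1-k_{A_{r-1}+1}^-/N\rrn^{\ka}$, I would observe that $k_{A_{r-1}+1}$ is the \emph{first} point removed at step $r$, hence the rightmost interior point, so $k_{A_{r-1}+1}^-$ is at distance $O(2^r)$ from $N$ and $1-k_{A_{r-1}+1}^-/N\le c\,2^{r}/N\le c\,2^{r-M}$ (using $N\ge 2^{M-1}$). Therefore $\sum_{r=1}^{M-1}(c\,2^{r-M})^{\ka}\le c^{\ka}\sum_{r\ge 1}2^{-\ka(M-r)}$, a convergent geometric series bounded independently of $M$ (hence of $N$) because $\ka>0$. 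This handles the first family cleanly.

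\textbf{Second sum.} For the double sum $\frac{1}{N^\mu}\sum_{m=A_{r-1}+2}^{A_r}\lln 1-k_m^+/N\rrn^{-\ga'}\lln k_m^+-k_m^-\rrn^\mu$, I would use $k_m^+-k_m^-\le c\,2^r$ so that each summand is at most $\frac{c\,2^{r\mu}}{N^\mu}\lln 1-k_m^+/N\rrn^{-\ga'}$. As $m$ runs over step $r$, the points $k_m^+$ are distinct surviving points spaced $\approx 2^{r}/N$ apart in the normalized variable, so $\frac{2^r}{N}\sum_m \lln 1-k_m^+/N\rrn^{-\ga'}$ is a Riemann sum for $\int_0^1 v^{-\ga'}\,dv=\frac{1}{1-\ga'}<\infty$ (finite since $0<\ga'<1$), giving $\sum_m\lln 1-k_m^+/N\rrn^{-\ga'}\le c\,N/2^{r}$. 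Substituting back, the inner sum over $m$ contributes at most $\frac{c\,2^{r\mu}}{N^\mu}\cdot\frac{N}{2^{r}}=c\,2^{r(\mu-1)}N^{1-\mu}$, and summing over $r$ gives $c\,N^{1-\mu}\sum_{r=1}^{M-1}2^{r(\mu-1)}\le c\,N^{1-\mu}2^{(M-1)(\mu-1)}$. Since $\mu>1$ and $2^{M-1}\le N$, this last expression is $\le c\,N^{1-\mu}N^{\mu-1}=c$, bounded uniformly in $N$.

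\textbf{Main obstacle.} The delicate point is the Riemann-sum estimate $\sum_m\lln 1-k_m^+/N\rrn^{-\ga'}\le c\,N/2^r$, since the integrand is singular at $v=0$ and one must check that the summation-points $k_m^+$ never approach $N$ closer than the step-$r$ mesh. This is exactly controlled by Lemma \ref{lem:prel-a-r}: the \emph{smallest} value of $1-k_m^+/N$ over step $r$ is bounded below by $\approx 2^{r}/N$ (the first surviving point below $N$ lies at distance $\gtrsim 2^r$), so the singular sum is dominated by $\int_{c2^r/N}^{1}v^{-\ga'}\,dv\cdot(N/2^r)$, which is $O(N/2^r)$ and in particular free of any negative power of $2^r/N$ because $\ga'<1$ keeps the integral convergent at $0$. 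Once this uniform lower bound on $1-k_m^+/N$ is in hand, every remaining manipulation is the elementary geometric and Riemann-sum bookkeeping sketched above, and the constant $c_{\ka,\mu,\ga'}$ is assembled from the two convergent series and the convergent integral.
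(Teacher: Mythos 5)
Your proposal follows essentially the same route as the paper's proof: both rest on the spacing information of Lemma \ref{lem:prel-a-r}, bound the first family by the geometric series $\sum_{r=1}^{M-1}(2^r/N)^{\ka}$, and bound the inner sum of the second family by noting that the values $1-k_m^+/N$ run over multiples of $2^r/N$, so that
$$\sum_{m=A_{r-1}+2}^{A_r} \lln 1-\tfrac{k_m^+}{N}\rrn^{-\ga'}\lln k_m^+-k_m^-\rrn^{\mu} \leq c\,(2^r)^{\mu-\ga'}N^{\ga'}(A_r-A_{r-1})^{1-\ga'} \leq c\,(2^r)^{\mu-1}N,$$
which is exactly your Riemann-sum comparison with $\int_0^1 v^{-\ga'}\,dv$; summing the geometric series in $r$ via $2^{M-1}\leq N$ then gives the uniform constant, as you say.

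The one point where your sketch is genuinely thinner than the paper's argument is the case you wave at in your ``main obstacle'' paragraph: the \emph{middle-point} removals. At the end of each step the algorithm may remove one extra point whose left neighbour is $0$; the paper's explicit formulas (its ``$N-A_{r-1}$ odd'' case) show that for such an index $m=A_r$ one has $k_{A_r}^-=0$, $k_{A_r}^+-k_{A_r}^-$ of size up to $3\cdot 2^r$ rather than exactly $2^r$, and $k_{A_r}^+=k_{A_r-1}^+$, i.e.\ a duplicated grid value. Your two uniform claims ($k_m^+-k_m^-\leq c\,2^r$ and $1-k_m^+/N\geq c^{-1}2^r/N$) do remain true for these anomalous points, but the lower bound holds only because a middle-point removal with $k_m^+=N$ --- which would make the summand $\lln 1-k_m^+/N\rrn^{-\ga'}$ infinite --- forces $A_r-A_{r-1}=2$ in the odd case, and this can happen only at the very last step $r=M$, which the sum in (\ref{contr-unif}) excludes. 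Equivalently, for $r\leq M-1$ the odd case always has $A_r-A_{r-1}\geq 3$, which is precisely the observation the paper invokes. Your write-up should make this case analysis explicit (Lemma \ref{lem:prel-a-r} alone, as cited, does not rule the degenerate configuration out); once it is added, your argument is complete and coincides with the paper's.
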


\begin{proof}
Actually, we use the following explicit expressions: at step $r$ ($r\in \{1,\ldots,M-1\}$), if $N-A_{r-1}$ is even, one has, for every $m\in \llbracket A_{r-1}+1,A_r-1 \rrbracket$,
\begin{equation}\label{k-m-plus}
k_m^+=N-2^r(m-A_{r-1})+2^r,
\end{equation}
\begin{equation}\label{k-m-moins}
k_m^-=N-2^r(m-A_{r-1}),
\end{equation}
and $k_{A_r}^+=N-2^r(A_r-A_{r-1})+2^r$, $k_{A_r}^-=0$, while if $N-A_{r-1}$ is odd, Formulas (\ref{k-m-plus}) and (\ref{k-m-moins}) remain true for $m\in \llbracket A_{r-1}+1,A_r-1 \rrbracket$, but $k_{A_r}^-=0$, $k_{A_r}^+=k_{A_r-1}^+=N-2^r(A_r-A_{r-1}-1)+2^r$. From these expressions, we first deduce
$$\sum_{r=1}^{M-1} \lln 1-\frac{k_{A_{r-1}+1}^-}{N} \rrn^\ka=\frac{1}{N^\ka} \sum_{r=1}^{M-1} (2^r)^\ka \leq c_\ka^1 \lp \frac{2^M}{N}\rp^\ka \leq c^2_\ka,$$
according to Lemma \ref{lem:prel-a-r}. Then, if $N-A_{r-1}$ is even, one has
\bean
\lefteqn{\sum_{m=A_{r-1}+2}^{A_r} \lln 1-\frac{k_m^+}{N}\rrn^{-\ga'} \lln k_m^+-k_m^-\rrn^\mu}\\
&=& \sum_{m=A_{r-1}+2}^{A_r-1} \lln 1-\frac{k_m^+}{N}\rrn^{-\ga'} \lln k_m^+-k_m^-\rrn^\mu+\lln 1-\frac{k_{A_r}^+}{N}\rrn^{-\ga'} \lln k_{A_r}^+\rrn^\mu\\
&=& \frac{2^{r(\mu-\ga')}}{N^{-\ga'}}\sum_{m=1}^{A_r-A_{r-1}-2} m^{-\ga'}+\lln 1-\frac{k_{A_r}^+}{N}\rrn^{-\ga'} \lln k_{A_r}^+\rrn^\mu\\
&\leq & c^3_{\ga'} \frac{(2^r)^{\mu-\ga'}}{N^{-\ga'}} (A_r-A_{r-1}-2)^{1-\ga'}\\
 & & +\frac{(2^r)^{-\ga'}}{N^{-\ga'}} (A_r-A_{r-1}-1)^{-\ga'} (N-2^r(A_r-A_{r-1}-1))^\mu\\
  &\leq & c^3_{\ga'} \frac{(2^r)^{\mu-\ga'}}{N^{-\ga'}} (A_r-A_{r-1}-2)^{1-\ga'}+\frac{(2^r)^{-\ga'}}{N^{-\ga'}} (N-2^r(A_r-A_{r-1}-1))^\mu.
\eean
since $A_r-A_{r-1} \geq 2$. In the same way, if $N-A_{r-1}$ is odd, one has
\bean
\lefteqn{\sum_{m=A_{r-1}+2}^{A_r} \lln 1-\frac{k_m^+}{N}\rrn^{-\ga'} \lln k_m^+-k_m^-\rrn^\mu}\\
 &\leq & c^3_{\ga'} \frac{(2^r)^{\mu-\ga'}}{N^{-\ga'}} (A_r-A_{r-1}-2)^{1-\ga'}\\
 & & +\frac{(2^r)^{-\ga'}}{N^{-\ga'}} (A_r-A_{r-1}-2)^{-\ga'} (N-2^r(A_r-A_{r-1}-2))^\mu\\
 &\leq & c^3_{\ga'} \frac{(2^r)^{\mu-\ga'}}{N^{-\ga'}} (A_r-A_{r-1}-2)^{1-\ga'}+\frac{(2^r)^{-\ga'}}{N^{-\ga'}} (N-2^r(A_r-A_{r-1}-2))^\mu.
\eean
since, in that case, $A_r-A_{r-1} \geq 3$. Thanks to Lemma \ref{lem:prel-a-r}, we now easily deduce
$$\frac{1}{N^\mu} \sum_{r=1}^{M-1} \sum_{m=A_{r-1}+2}^{A_r} \lln 1-\frac{k_m^+}{N}\rrn^{-\ga'} \lln k_m^+-k_m^-\rrn^\mu \leq \frac{c^3_{\ga'}}{N^{\mu-1}} \sum_{r=1}^{M-1} (2^r)^{\mu-1}+\frac{c^4_\mu}{N^{\mu-\ga'}} \sum_{r=1}^{M-1}(2^r)^{\mu-\ga'} \leq c_{\mu,\ga'}.$$
\end{proof}

\section{Appendix B}

This section is devoted to the proof of Proposition \ref{prop:cas-brown}. To this end, we will resort to the two following lemmas, respectively borrowed from \cite{GT} and \cite{brz-convol}:

\begin{lemma}\label{lem-grr}
Fix a time $T>0$. For all $\al,\be \geq 0$, $p,q \geq 1$, there exists a constant $c$ such that for every $R\in \cac_2([0,T];\cb_{\al,p})$,
$$\cn[R;\cac_2^\be([0,T];\cb_{\al,p})] \leq c \lcl U_{\be+\frac{2}{q},q,\al,p}(R)+\cn[\delha R;\cac_3^\be([0,T];\cb_{\al,p}] \rcl,$$
where
$$U_{\be,q,\al,p}(R)=\lc \int_{0\leq u<v\leq T} \lp \frac{\norm{R_{vu}}_{\cb_{\al,p}}}{\lln v-u\rrn^\be} \rp^q du dv \rc^{1/q}.$$
\end{lemma}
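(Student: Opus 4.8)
The statement is a Garsia--Rodemich--Rumsey (GRR) inequality transplanted to the twisted increments $\delha$ and the semigroup $S$: it recovers a pointwise H\"older-type control of the two-index family $R$ from the averaged quantity $U_{\be+\frac2q,q,\al,p}(R)$, up to the defect $\cn[\delha R;\cac_3^\be]$. The plan is to imitate the classical proof of the GRR lemma, reading $R_{vu}$ as a substitute for the increment of a path and keeping track, at each reconstruction step, of both the defect $\delha R$ and the contraction factors $S_{\cdot\cdot}$. Writing $G:=\cn[\delha R;\cac_3^\be([0,T];\cb_{\al,p})]$ and $U:=U_{\be+\frac2q,q,\al,p}(R)$, it clearly suffices to fix a single pair $s<t$ and to establish $\norm{R_{ts}}_{\cb_{\al,p}}\leq c\,(t-s)^\be\,(U+G)$, since dividing by $(t-s)^\be$ and taking the supremum then yields the claim.

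First I would run the usual GRR averaging argument on $[s,t]$. The finiteness of $U^q=\int_{s\leq u<v\leq t}(\norm{R_{vu}}_{\cb_{\al,p}}/\lln v-u\rrn^{\be+2/q})^q\,du\,dv$ allows one to select a strictly decreasing sequence $t=u_0>u_1>u_2>\cdots$ with $u_n-s\sim 2^{-n}(t-s)$ at which the increments of $R$ are small. Indeed, the GRR good-point selection is a statement about the double integral alone and does not require its integrand to be a genuine increment, so it applies verbatim to $R$; taking $\Psi(x)=x^q$ and $p(r)=r^{\be+2/q}$ it produces
$$\norm{R_{u_n u_{n+1}}}_{\cb_{\al,p}}\leq c\,U\,2^{-n\be}\,(t-s)^\be,\qquad n\geq 0,$$
the exponent $\be+2/q$ being precisely what the GRR integral $\int_0^{t-s}\Psi^{-1}(4U^q/r^2)\,dp(r)$ needs in order to reproduce the power $(t-s)^\be$.

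Next I would reconstruct $R_{ts}$ along this sequence using the twisted Chasles relation $R_{ts}=R_{tu_1}+S_{tu_1}R_{u_1 s}+(\delha R)_{tu_1 s}$, which follows from the very definition $(\delha R)_{tus}=R_{ts}-R_{tu}-S_{tu}R_{us}$. Iterating and invoking the semigroup identity $S_{u_0 u_n}=S_{u_0 u_1}\cdots S_{u_{n-1}u_n}$, together with continuity of $R$ (so that the tail $S_{u_0 u_N}R_{u_N s}\to 0$ as $u_N\downarrow s$), gives
$$R_{ts}=\sum_{n\geq 0}S_{u_0 u_n}R_{u_n u_{n+1}}+\sum_{n\geq 0}S_{u_0 u_n}(\delha R)_{u_n u_{n+1}s}.$$
Since $A_p^\al$ commutes with $S$ and $S$ contracts $L^p$ by (A1), $S$ is a contraction of $\cb_{\al,p}$, so $\norm{S_{u_0 u_n}}_{\cl(\cb_{\al,p},\cb_{\al,p})}\leq 1$. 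The first sum is then bounded by $\sum_n c\,U\,2^{-n\be}(t-s)^\be\leq c'\,U\,(t-s)^\be$, while the second is bounded via $\norm{(\delha R)_{u_n u_{n+1}s}}_{\cb_{\al,p}}\leq G\,(u_n-s)^\be\leq c\,G\,2^{-n\be}(t-s)^\be$, which again sums to $c''\,G\,(t-s)^\be$. Adding the two yields the desired $\norm{R_{ts}}_{\cb_{\al,p}}\leq c\,(U+G)(t-s)^\be$.

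The hard part will be the GRR good-point selection of the second paragraph: one must extract, at each prescribed dyadic scale, points where $R$ is genuinely small in $\cb_{\al,p}$ from the sole knowledge that its normalized $q$-th power is integrable, and do so with the precise scale dependence that makes the geometric series converge. The remaining care is structural and routine --- checking that the semigroup factors telescope correctly (contraction plus $S_{ab}S_{bc}=S_{ac}$), that the tail term vanishes by continuity of $R$, and that the shift $\be\mapsto\be+2/q$ is exactly absorbed by the GRR integral. One could alternatively deduce the result by a direct reduction to the scalar GRR lemma, but adapting the averaging argument to the twisted increments keeps the r\^ole of $\delha R$ transparent.
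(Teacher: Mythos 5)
The paper offers no proof of this lemma at all: it is imported verbatim from \cite{GT} (the companion Lemma \ref{lem:bdg} being the one taken from \cite{brz-convol}), so your proposal has to be measured against the argument in that reference. Your architecture is the same as the one used there, and it is the right one: the observation that the Garsia--Rodemich--Rumsey good-point selection uses only the double integral and never the additivity of increments; the reconstruction of $R_{ts}$ through the twisted Chasles relation $R_{ts}=R_{tu}+S_{tu}R_{us}+(\delha R)_{tus}$; the absorption of the accumulated semigroup factors by the contraction property of $S$ on $\cb_{\al,p}$ (legitimate under (A1), since $A_p^\al$ commutes with $S$); and the geometric summability of both the increment series and the defect series when $\be>0$.

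There is, however, one step that fails as written: you start the selection at the endpoint, $u_0=t$. The Chebyshev-type choice of $u_1$ requires a bound on the slice integral $I(t):=\int_s^t\lp\norm{R_{tu}}_{\cb_{\al,p}}/(t-u)^{\be+2/q}\rp^q\,du$, whereas the hypothesis only controls $\int_s^t I(v)\,dv\leq U^q$, i.e. $I(v)$ for $v$ in a set of large measure, which may avoid any neighbourhood of $t$. One can have $R$ continuous, vanishing on the diagonal, with $U<\infty$ and yet $I(t)=+\infty$ (take for instance $R_{vu}=(v-u)^{\be+\frac{1}{q}+\sqrt{t-v}}$), in which case the double integral gives no control whatsoever on the slice at $v=t$ and the selection cannot even begin; your claimed first bound $\norm{R_{tu_1}}_{\cb_{\al,p}}\leq c\,U\,(t-s)^\be$ is then not obtainable by this mechanism. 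This is exactly why the classical GRR proof, and its adaptation in \cite{GT}, is \emph{two-sided}: one picks a good interior point $\tau$ with $I(\tau)\leq 2U^q/(t-s)$, runs a decreasing sequence from $\tau$ to $s$ and an increasing sequence from $\tau$ to $t$ (continuity of $R$ is invoked at both endpoints, integrability at neither), and glues the two halves with one more twisted Chasles step, $R_{ts}=R_{t\tau}+S_{t\tau}R_{\tau s}+(\delha R)_{t\tau s}$, whose last term is bounded by $\cn[\delha R;\cac_3^\be([0,T];\cb_{\al,p})]\,(t-s)^\be$. Your telescoping works unchanged in the increasing direction, in the form $R_{t\tau}=\sum_{n\geq 0}\lc S_{tv_{n+1}}R_{v_{n+1}v_n}+(\delha R)_{tv_{n+1}v_n}\rc$, so the repair costs no new idea --- but it must be made, since the one-sided construction from $u_0=t$ is genuinely unavailable. (Note also that your geometric series require $\be>0$; the boundary case $\be=0$ allowed by the statement is not reachable by this argument.)
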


\begin{lemma}\label{lem:bdg}
For every $p\geq 2$, the Burkholder-Davies-Gundy inequality holds in $\cb_p$. In other words, for every $T>0$, if $B$ is a one-dimensional Brownian motion defined on complete filtered probability space $(\Omega,\mathcal{F},P)$ and $H$ is an adapted process with values in $L^2([0,T];\cb_p)$, then for any $q \geq 2$, there exists a constant $c$ independent of $H$ such that
\begin{equation}\label{bdg}
E\lc \sup_{0\leq t\leq T} \Big \| \int_0^t H_u \, dB_u\Big \| _{\cb_p}^q \rc \leq c\,  E \lc \lp \int_0^T \norm{H_u}_{\cb_p}^2 \, du \rp^{q/2} \rc.
\end{equation}
\end{lemma}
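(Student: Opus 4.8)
The plan is to reduce the $\cb_p$-valued inequality to the classical scalar Burkholder--Davis--Gundy inequality applied pointwise in the space variable $\xi\in\co$, and then to commute the spatial $L^p(\co)$-norm past the norms in the probability and time variables by means of Minkowski's integral inequality. First I would set $M_t:=\int_0^t H_u\, dB_u$, a continuous $\cb_p$-valued martingale, the pointwise-in-$\xi$ representation $M_t(\xi)=\int_0^t H_u(\xi)\, dB_u$ being justified by a stochastic Fubini argument (valid since $H\in L^2(\ott;\cb_p)$). Because $\norm{\cdot}_{\cb_p}$ is convex, the process $t\mapsto\norm{M_t}_{\cb_p}$ is a nonnegative submartingale, so Doob's $L^q$ maximal inequality ($q>1$) gives $E[\sup_{0\le t\le T}\norm{M_t}_{\cb_p}^q]\le (q/(q-1))^q\, E[\norm{M_T}_{\cb_p}^q]$, and it suffices to bound $E[\norm{M_T}_{\cb_p}^q]$.

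Assuming first that $q\ge p$, so that $q/p\ge 1$, I would apply Minkowski's integral inequality to pull $\int_\co d\xi$ outside the $L^{q/p}$-norm in $\omega$, obtaining $E[\norm{M_T}_{\cb_p}^q]\le (\int_\co (E|M_T(\xi)|^q)^{p/q}\, d\xi)^{q/p}$. For each fixed $\xi$ the process $u\mapsto M_u(\xi)$ is a scalar stochastic integral, so the classical BDG inequality bounds $E|M_T(\xi)|^q$ by $c_q\, E[(\int_0^T |H_u(\xi)|^2\, du)^{q/2}]$. Two further Minkowski interchanges then finish the estimate: one between $\omega$ and $\co$ (licit since $p\le q$) and one between time and $\co$ (licit since $2\le p$, yielding $\norm{(\int_0^T |H_u|^2\, du)^{1/2}}_{\cb_p}\le (\int_0^T \norm{H_u}_{\cb_p}^2\, du)^{1/2}$ pathwise). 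Together they turn $\int_\co (E[(\int_0^T|H_u(\xi)|^2\, du)^{q/2}])^{p/q}\, d\xi$ into $(E[(\int_0^T \norm{H_u}_{\cb_p}^2\, du)^{q/2}])^{p/q}$; raising to the power $q/p$ and collecting constants proves the claim when $q\ge p$.

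The hard part will be the complementary range $2\le q<p$. There $q/p<1$, the map $\zeta\mapsto\zeta^{q/p}$ is concave, and the very first Minkowski step breaks down, since the $L^{q/p}$ functional is no longer subadditive and none of the interchanges above run in the required direction. This borderline is intrinsic to the geometry of $\cb_p$: for $p\ge 2$ the space $L^p(\co)$ is $2$-smooth (of martingale type $2$), and it is precisely this property that produces the quadratic estimate $E\norm{M_T}_{\cb_p}^2\le c_p\, E\int_0^T\norm{H_u}_{\cb_p}^2\, du$ through the smoothness inequality $\norm{x+y}_{\cb_p}^2\le \norm{x}_{\cb_p}^2+2\langle J(x),y\rangle+c_p\norm{y}_{\cb_p}^2$, with $J$ the duality map. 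A standard stopping-time localization together with a Doob/BDG bootstrap then upgrades this $q=2$ bound to every $q\ge 2$, covering in particular $2\le q<p$. This martingale-type-$2$ route is exactly the content of \cite{brz-convol}, from which the lemma is borrowed, while the elementary Minkowski computation above records the self-contained proof available whenever $q\ge p$.
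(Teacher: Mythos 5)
Your proposal is correct where it is self-contained, and it in fact does more than the paper does: the paper offers no proof of this lemma at all, but quotes it directly from \cite{brz-convol} (see the sentence introducing the two lemmas of Appendix B). Your argument for the range $q\ge p$ is complete and elementary: Doob's $L^q$ maximal inequality applied to the submartingale $t\mapsto\norm{M_t}_{\cb_p}$, the scalar BDG inequality applied pointwise in $\xi\in\co$, and three Minkowski interchanges, each performed in the licit direction ($q/p\ge 1$ to pull $\int_\co d\xi$ out of the $L^{q/p}(\Omega)$-norm, $p\le q$ to exchange $L^p(\co)$ with $L^q(\Omega)$, and $p/2\ge 1$ for the pathwise exchange of time and space giving $\norm{(\int_0^T\abs{H_u}^2du)^{1/2}}_{\cb_p}\le(\int_0^T\norm{H_u}_{\cb_p}^2du)^{1/2}$). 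Note also that the restriction $q\ge p$ is harmless for the way the lemma is actually used: in the proof of Proposition \ref{prop:cas-brown} the exponent $q$ is a free parameter taken arbitrarily large (e.g.\ $q>1/(\tfrac{1}{2}-\ga)$), so one may always impose $q\ge p$ there, and your partial proof would suffice for the paper's purposes. For the complementary range $2\le q<p$ you correctly identify both the obstruction (concavity of $\zeta\mapsto\zeta^{q/p}$ kills the first interchange) and the correct remedy ($2$-smoothness, i.e.\ martingale type $2$, of $L^p$ for $p\ge2$). The one soft spot is your claim that the $q=2$ estimate upgrades to every $q\ge2$ by ``stopping-time localization together with a Doob/BDG bootstrap'': Lenglart-type domination arguments only produce exponents \emph{below} $2$, and the genuine argument for $q>2$ requires either It\^o's formula for $x\mapsto\norm{x}_{\cb_p}^q$ (exploiting smoothness of the norm) or a good-$\lambda$ inequality derived from the stopped $L^2$ bound. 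Since you delegate precisely that part to \cite{brz-convol} --- which is the paper's entire proof of the lemma --- this imprecision is not a gap relative to the paper, but it should not be described as routine.
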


\

\

\begin{proof}[Proof of Proposition \ref{prop:cas-brown}]
On the whole, this is the same identification procedure as in the proof of Proposition \ref{prop:cas-regu}. The only difference lies in the fact that the direct estimates of the integrals under consideration will here be replaced with a joint use of Lemmas \ref{lem-grr} and \ref{lem:bdg}.

\smallskip

We denote by $y$ the (Itô) solution of (\ref{equa-gene}), with initial condition $\psi \in \cb_{\eta,p}$. Let us fix $\ga \in (1/3,1/2)$ such that $\ga+\eta >1$ and $2\ga >\eta$. If one refers to \cite[Theorem 1]{jent-roeck}, one can assert that $y\in \cac_1^0([0,1];\cb_{\eta,p})$ a.s, and one even knows that $\sup_{t\in [0,1]} E\lc \norm{y_t}_{\cb_{\eta,p}}^q \rc < \infty$ for every $q\in \N$. Then, since $(\delha y)_{ts}=\int_s^t S_{tu} \, dx^i_u \, f_i(y_u)$, one has, thanks to Lemma \ref{lem:bdg},
\begin{eqnarray}
E\lc \norm{ \int_s^t S_{tu} \, dx^i_u \, f_i(y_u)}_{\cb_p}^q \rc &\leq & c \, E\lc \lp \int_s^t \norm{S_{tu} f_i(y_u)}_{\cb_p}^2 \, du \rp^{q/2} \rc \nonumber\\
&\leq & c \lln t-s \rrn^{q/2-1} \int_s^t E\lc \norm{S_{tu}f_i(y_u)}_{\cb_p}^{q} \rc \, du \nonumber\\
& \leq & c \lln t-s \rrn^{q/2},\label{esti-preu-br}
\end{eqnarray}
and consequently, with the notation of Lemma \ref{lem-grr},
\bean
E\lc U_{\ga+\frac{2}{q},q,0,p}(\delha y)\rc &\leq & \lp \iint_{0\leq u<v\leq 1} \frac{E\lc \norm{(\delha y)_{vu}}_{\cb_p}^q \rc}{\lln v-u\rrn^{\ga q+2}} \, dudv \rp^{1/q}\\
&\leq & \lp \iint_{0\leq u<v \leq 1} \lln v-u\rrn^{q(\frac{1}{2}-\ga)-2} dudv \, \rp^{1/q} \ < \infty
\eean
by picking $q >1/(\frac{1}{2}-\ga)$. Together with the result of Lemma \ref{lem-grr}, this yields $y\in \cacha_1^\ga([0,1];\cb_p)$ a.s.

\smallskip

\noindent
As far as $K^y$ is concerned, we already know that $\delha K^y=X^{x,i} \der(f_i(y))$, which leads to $\delha K^y \in \cac_3^{2\ga}([0,1];\cb_p)$ a.s. Then, from the expression $K^y_{ts}=\int_s^t S_{tu} \, dx^i_u \, \der(f_i(y))_{us}$, we deduce, as in (\ref{esti-preu-br}), $E\lc \norm{K^y_{ts}}_{\cb_p}^q \rc \leq c \lln t-s \rrn^q$, and accordingly, thanks to Lemma \ref{lem-grr}, $K^y \in \cac_2^{2\ga}([0,1];\cb_p)$ a.s.

\smallskip

\noindent
Finally, for $J^y$, we first lean on the decomposition (\ref{dec-delha-j-n}) of $\delha J^y$ to assert that $\delha J^y\in \cac_3^{\ga+\eta}([0,1];\cb_p)$ a.s. Then we appeal to the expression of $J^y$ that we have exhibited in the proof of Proposition \ref{prop:cas-regu}, namely $J^y_{ts}=\int_s^tS_{tu} \, dx^i_u \, M^i_{us}$ with $M^i$ given by (\ref{exp-m-i}), to show that $E\lc \norm{J^y_{ts}}_{\cb_p}^q \rc \leq c \lln t-s \rrn^{q(\frac{1}{2}+\eta)}$. Together with Lemma \ref{lem-grr}, these results clearly provide the expected regularity, i.e., $J^y \in \cac_2^\mu([0,1];\cb_p)$ a.s, with $\mu=\ga+\eta >1$. 

\smallskip

\noindent
The control of the regularity of $J^y$ as a process with values in $\cb_{\eta,p}$ stems from the same reasoning. Indeed, we first deduce from (\ref{decompo-delha-j-n-2}) that $\delha J^y \in \cac_3^{\ga}([0,1];\cb_{\eta,p})$ a.s, since for instance $\norm{X^{x,i}_{tu}f_i(y_u)}_{\cb_{\eta,p}} \leq c_{x,f,y} \lln t-u \rrn^\ga$ and
$$\norm{X^{x,i}_{tu} (\der x^j)_{us} F_{ij}(y_s)}_{\cb_{\eta,p}} \leq c_x \lln t-s \rrn^{2\ga-(\eta-\frac{1}{2})} \norm{F_{ij}(y_u)}_{\cb_{1/2,p}} \leq c_{x,f,y} \lln t-s \rrn^\ga.$$
We can then write $J^y$ as $J^y_{ts}=\int_s^t S_{tu} \, dx^i_u \, \der(f_i(y))_{us}-X^{xx,ij}_{ts}F_{ij}(y_s)$ to easily derive $E\lc \norm{J^y_{ts}}_{\cb_{\eta,p}}^q \rc \leq c_{x,f,y} \lln t-s \rrn^{q/2}$, and hence $J^y \in \cac_2^\ga([0,1];\cb_{\eta,p})$ a.s.

\end{proof}

\

\textbf{Acknowledgement}

\

I am very grateful to an anonymous reviewer for his/her careful reading and useful suggestions.

\

\end{document}